\newcommand{\height}{\operatorname{ht}}
\newcommand{\symb}{\operatorname{symb}}
\newcommand{\dgMod}{\operatorname{dgMod}}
\newcommand{\colim}{\operatorname{colim}}
\DeclarePairedDelimiter{\ceiling}{\lceil}{\rceil}
\title{Cohomology of $p$-adic Chevalley groups}
\author{Andrea Dotto}
\address{
    Department of Mathematics,
    King's College London,
    Strand, London, WC2R 2LS, United Kingdom
}
\email{andrea.dotto@kcl.ac.uk}
\author{Bao V.~Le Hung}
\address{Department of Mathematics,
Northwestern University, 
2033 Sheridan Road, 
Evanston, Illinois 60208, USA}
\email{lhvietbao@googlemail.com}
\begin{document}
\maketitle

\begin{abstract}
    Let~$G$ be a split connected reductive group over the ring of integers of a finite unramified extension~$K$ of $\bQ_p$.
    Under a standard assumption on the Coxeter number of~$G$, 
    we compute the cohomology algebra of~$G(\cO_K)$ and its Iwahori subgroups, with coefficients in the residue field of~$K$.
    Our methods involve a new presentation of some graded Lie algebras appearing in Lazard's theory of saturated $p$-valued groups, 
    and a reduction to coherent cohomology of the flag variety in positive characteristic.
    We also consider the case of those inner forms of $\GL_n(K)$ that give rise to the Morava stabilizer groups in stable homotopy theory.
\end{abstract}

\tableofcontents

\section{Introduction.}
Let~$K/\bQ_p$ be a finite extension, with ring of integers~$\cO_K$, residue field~$k \cong \bF_{p^f}$, and ramification degree~$e$.
Let~$G$ be a split connected reductive group over~$\cO_K$.
If~$\Gamma \subset G(\cO_K)$ is a compact open subgroup, the (continuous) cohomology algebra $H^*(\Gamma, k)$ is well-understood whenever $\Gamma$ is small enough,
by the foundational work of Lazard~\cite{Lazard}: more precisely, if $\Gamma$ is a uniform pro-$p$ group, then 
\[
H^*(\Gamma, k) = \wedge^* H^1(\Gamma, k)
\]
is an exterior algebra generated in degree one~\cite[Proposition~V.2.5.7.1]{Lazard}.
However, many subgroups of~$G(\cO_K)$ of significant interest in number theory, starting from~$G(\cO_K)$ itself, are not uniform groups,
or even pro-$p$. In particular, the cohomology of $G(\cO_K)$ with mod~$p$ coefficients remains unknown, outside a few scattered examples.

Our main result in this paper is the computation of $H^*(G(\cO_K), k)$ whenever~$e = 1$ and~$p > h+1$, where~$h$ is the maximum of the Coxeter numbers
of the irreducible components~$\Phi_1, \ldots, \Phi_n$ of the root system~$\Phi$ of~$G$.
To state it, we will need the following notation.
To each component~$\Phi_i$ one can associate a finite set of positive integers, called the exponents, whose definition is recalled in Section~\ref{Lie algebra cohomology}.
For example, if~$\Phi_i$ has type~$A_n$ then the exponents are the positive integers from~$1$ to~$n$.
We write~$\Lambda_{k}(\Phi_i)$ for the free graded-commutative $k$-algebra on generators of degree~$2m+1$, where~$m$ runs through the exponents of~$\Phi_i$, 
and we write $\Lambda_k(\Phi) := \otimes_{i=1}^n \Lambda_{k}(\Phi_i)$
for the graded-commutative tensor product.
We also write~$Z$ for the connected centre of~$G$, and~$Z_1$ for the pro-$p$ Sylow subgroup of~$Z(\cO_K)$.
Then the result is as follows.

\begin{thm}\label{main theorem intro}
With notation as in the previous paragraph, assume that~$e = 1$ and~$p > h+1$.
Let $I \subset G(\cO_K)$ be an Iwahori subgroup.
Then the restriction map $H^*(G(\cO_K), k) \to H^*(I, k)$ is an isomorphism, and 
\[
H^*(I, k) \cong H^*(Z_1, k) \otimes_k \Lambda_k(\Phi)^{\otimes f}
\]
as graded $k$-algebras.
\end{thm}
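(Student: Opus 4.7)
The plan is to carry out three reductions, culminating in a $T(k)$-invariant computation in an exterior algebra. First, the hypothesis $p>h+1$ implies that the index $[G(\cO_K):I]=|G(k)/B(k)|=\sum_{w\in W}q^{\ell(w)}$, with $q=p^f$, is coprime to $p$, since this sum is a polynomial in $q$ with coefficients the Poincaré polynomial of $W$. The Cartan--Eilenberg stable element theorem then identifies $H^*(G(\cO_K),k)$ with a split summand of $H^*(I,k)$, and to upgrade this to an isomorphism I plan to verify a posteriori that the relevant intertwining conjugations act trivially on $H^*(I,k)$, using that the final answer is manifestly invariant under the Weyl group.

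Next, the short exact sequence $1\to I_1\to I\to T(k)\to 1$, where $I_1$ is the pro-$p$ Sylow, has quotient of order $(q-1)^{\mathrm{rank}(G)}$, coprime to $p$, so the Hochschild--Serre spectral sequence degenerates to $H^*(I,k)\cong H^*(I_1,k)^{T(k)}$. Under $p>h+1$, $I_1$ is a uniform pro-$p$ group (coming from a saturated $p$-valuation), so by Lazard's theorem $H^*(I_1,k)\cong\bigwedge^*_k H^1(I_1,k)$. The abelianization identifies $H^1(I_1,k)$ as a $T(k)$-module with the $k$-dual of a Lazard-graded quotient of $\mathfrak g(\cO_K)$; after base change to $\overline k$, its weights consist of the $f$ Galois twists of each root $\alpha\in\Phi$, together with a trivial summand of $k$-dimension $f\cdot\mathrm{rank}(G)$ coming from the Cartan, which in turn contains the contribution of the connected centre $Z$.

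The heart of the argument is the explicit computation of $T(k)$-invariants in this exterior algebra. The trivial weight part contributes the tensor factor $H^*(Z_1,k)$ from the pro-$p$ part of the centre; the remainder is an invariant-theoretic problem on the root weights, which must be identified with $\Lambda_k(\Phi)^{\otimes f}$, with generators in degree $2m+1$ indexed by the exponents $m$ of each simple component and by the $f$ Frobenius twists. Here the paper's new input should intervene: a presentation of the graded Lie algebra attached to $I_1$, compatible with the root datum of $G$, reduces the invariant computation to a coherent cohomology computation on the flag variety $G/B$ over $k$. The classical identification of exponent-degree generators with invariant differential forms on $G/B$, due to Kostant in characteristic zero and available in positive characteristic under $p>h+1$ via Kempf-style vanishing, then yields the claimed shape.

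The principal obstacle is the invariant-theoretic identification in this final step, which requires both the new presentation of Lazard's graded Lie algebra and the reduction to coherent cohomology of $G/B$ in characteristic $p$; the other reductions are routine prime-to-$p$ arguments once the hypothesis $p>h+1$ is in force.
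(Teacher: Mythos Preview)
The central claim that ``$I_1$ is a uniform pro-$p$ group (coming from a saturated $p$-valuation)'' is false, and the argument collapses from that point on. Under $p>h+1$ the Moy--Prasad filtration does give $I_1$ a \emph{saturated} $p$-valuation, but saturation is strictly weaker than uniformity; the associated graded $\bar{\fg}_k=\gr(I_1)\otimes_{\bF_p[\epsilon]}\bF_p$ is the genuinely non-abelian Lie algebra $\fn\ltimes(\fg/\fn)$, so $H^*(I_1,k)$ is not the exterior algebra on $H^1$. Already for $G=\SL_2$ over $\bQ_p$ with $p>3$: the commutator $[u_\alpha(1),u_{-\alpha}(p)]$ equals $\alpha^\vee(1+p)$ modulo higher filtration, so the Cartan direction dies in $I_1^{\mathrm{ab}}$ and $H^1(I_1,\bF_p)$ is $2$-dimensional with weights $\pm\alpha$, not $3$-dimensional with a trivial summand as your description asserts. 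Taking $T(\bF_p)$-invariants in $\wedge^*H^1$ would then yield classes in degrees $0$ and $2$ only, whereas the correct answer $\Lambda_{\bF_p}(A_1)$ sits in degrees $0$ and $3$. More generally the generators of $\Lambda_k(\Phi)$ live in degrees $2m_i+1>1$ and cannot arise as weight-zero monomials in an exterior algebra generated in degree~$1$.

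What the paper actually does is work with the spectral sequence $E_1=H^*(\bar{\fg}_k,k)\Rightarrow H^*(I_1,k)$ of Sorensen. The new presentation $\bar{\fg}_k\cong\fn\ltimes(\fg/\fn)$ feeds into a Hochschild--Serre computation of $H^*(\bar{\fg}_k,k)^T$ as $H^*(\fn,\wedge(\fg/\fn)^\vee)^T$; the flag-variety step identifies the subalgebra $H^*(\fn,\wedge(\fg/\fb)^\vee)^T$ with $\lbar A(G/B)$, and a Koszul/Borel-presentation argument then produces the exterior generators in the correct odd degrees. Degeneration of the spectral sequence and the isomorphism $H^*(G(\cO_K),k)\to H^*(I,k)$ are both obtained by a dimension comparison with characteristic zero via the Lazard isomorphism $H^*(\Gamma,K)\cong H^*(\fg_K,K)$, not by a stable-element or Weyl-invariance argument.
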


\begin{rk}\begin{enumerate}
\item
A version of Theorem~\ref{main theorem intro} continues to hold after replacing~$k$ with an irreducible smooth representation $\sigma$ of $k[G(\cO_K)]$ with $p$-small highest weight,
and will be contained in our work in progress~\cite{DLW}.
This is motivated by expectations from the categorical mod~$p$ Langlands program, which we briefly explain in the case of $\GL_{n, \bQ_p}$. 
For generic $\sigma$, there is a $k$-algebra map 
\[H^*(\GL_n(\bZ_p), \sigma^\vee \otimes \sigma) \to \mathrm{Ext}^*_{\GL_n(\bQ_p)}(c\text{-Ind}_{\GL_n(\bZ_p)}^{\GL_n(\bQ_p)} \sigma)\]
identifying the left-hand side with a certain ``weight $0$ part'' of the Hecke $\Ext$-algebra on the right-hand side.
After taking the central character into account, the categorical mod~$p$ Langlands conjecture predicts 
that the Hecke $\Ext$-algebra is isomorphic to the $\Ext$-algebra of a certain line bundle on the irreducible component $C_\sigma$ of the Emerton--Gee stack~$\cX_{n, \bQ_p}$, 
which we compute explicitly in~\cite{DLW}.
In fact, we first arrived at Theorem~\ref{main theorem intro} by contemplating the weight $0$ part of this computation.
\item As alluded to before, if we set $K_1:=\ker( G(\cO_K)\to G(k))$, then Lazard's theory gives a simple description of $H^*(K_1,k)$, and a naive approach 
to Theorem \ref{main theorem intro} would be to use the Hochschild--Serre spectral sequence for $K_1\triangleleft\, G(\cO_K)$. However, since the finite group $G(k)$ has 
infinite cohomological dimension, there must be many non-zero differentials in the $E_2$ page, and it seems intractable to understand the resulting large amount of cancellations.
\end{enumerate}
\end{rk}

Theorem~\ref{main theorem intro} is the conjunction of Propositions~\ref{pp: Iwahori cohomology for reductive groups} and~\ref{pp: cohomology of the maximal compact subgroup},
whose main input is Theorem~\ref{main theorem}, which is the computation of $H^*(I, k)$ when~$G$ is almost-simple simply connected.
We now sketch the proof of Theorem~\ref{main theorem}, assuming for simplicity that~$K = \bQ_p$.
Recall that the de Rham cohomology of any compact connected real Lie group with the same root system as~$G$ is isomorphic to~$\Lambda_\bR(\Phi)$,
as can be seen identifying it with the cohomology of $\fg:= \Lie(G)$.
This coincidence can be explained by the method of proof of Theorem~\ref{main theorem intro}, which also uses Lie algebra cohomology; however, rather than
working with the simple Lie algebra~$\fg$, we will be working with a finite-dimensional nilpotent $\bF_p$-Lie algebra~$\lbar \fg$, which arises naturally in Lazard's theory,
and is defined in~\eqref{definition of gbar}.
If $T \subset G$ is a maximal torus, there is a natural action of~$T$ on~$\lbar \fg$, i.e.\ a grading by weights.
Via the spectral sequence~\eqref{Lazard spectral sequence}, which is constructed in~\cite{SorensenHochschild}, 
we reduce the proof of Theorem~\ref{main theorem intro} to the computation of the weight zero part of $H^*(\lbar \fg, \bF_p)$.
We do this in two steps.

The first step is carried out in Section~\ref{graded Lie algebras}, which works with arbitrary~$e \geq 1$, and computes a novel presentation of~$\lbar \fg$:
in more detail, when~$e = 1$, $\lbar \fg$ is proved to be isomorphic to the semidirect product of
the nilpotent upper-triangular subalgebra $\fn \subset \fg$ with its adjoint representation on $\fg/\fn$.
As part of the proof of this isomorphism, in Theorem~\ref{graded MP} we 
compute the associated graded of the 
Moy--Prasad filtration 
on the pro-$p$ Sylow subgroup $I_1 \subset I$,
in terms of nilpotent loop Lie algebras.
This is a refinement of the Moy--Prasad isomorphism for split groups, which is likely to be known to experts, 
but does not seem to be documented in the literature in this form.
Its relevance to the problem at hand comes from the fact that the Moy--Prasad filtration is a saturated $p$-valuation whenever
$he < p-1$ (this is essentially the content of~\cite[Proposition~3.4]{LahiriSorensen}).

In the second step, we use our presentation of~$\lbar \fg$ to identify~$H^*(\lbar \fg, \bF_p)^T$ with the weight zero part of the $\fn$-cohomology of $\wedge(\fg/\fn)^\vee$,
and then with the cohomology of the coherent $\cO_X$-algebra on $X := G/B$ associated to $\wedge(\fg/\fn)^\vee$.
This is closely related to the Hodge cohomology of~$X$, and so to the Chow ring $A(G/B) \otimes \bF_p$. 
By work of Demazure~\cite{Demazureinvariants, DemazureSchubert}, there is a graded $\bF_p$-algebra surjection
\[
\Sym \ft^\vee \to A(G/B) \otimes \bF_p,
\]
sometimes called the Borel presentation, whose kernel is generated by a regular sequence of homogeneous elements of degree~$m_i+1$.
Using this, we prove in Section~\ref{cohomology} that $H^*(\lbar \fg, \bF_p)^T$ is computed by the derived tensor product
\[
(A(G/B) \otimes \bF_p) \otimes^L_{\Sym (\ft^\vee[-2])} \bF_p[0]
\]
after doubling the degrees on the Chow ring.
Then Theorem~\ref{main theorem intro} follows on computing this tensor product with a Koszul complex. 

\begin{rk}
As part of the proof of Theorem~\ref{main theorem intro}, we prove that the spectral sequence~\eqref{Lazard spectral sequence}
degenerates at~$E_1$ for the group~$I_1$, after passing to $T$-invariants (provided that~$p>h+1$).
Given the computation of $H^*(\lbar \fg, \bF_p)^T$, the proof is an application of the Lazard isomorphism in characteristic zero.
As we were writing this paper, we learned that Ardakov has proved the degeneration of~\eqref{Lazard spectral sequence} for all saturated $p$-valued groups~\cite{Ardakovsaturable}, 
under a similar constraint on~$p$, 
by different methods. 
\end{rk} 

\begin{rk}
The bound~$p > h+1$ in Theorem~\ref{main theorem intro} is essentially optimal with respect to the methods of this paper, 
since the pro-$p$ Sylow subgroup of~$I$ need not admit a saturated $p$-valuation if $p \leq h+1$.
The structure of $H^*(I, k)$ is furthermore expected to change significantly once~$p \leq h+1$.
For example, the pro-$p$ Iwahori subgroup of $\GL_n(\bQ_p)$ has nontrivial $p$-torsion if~$p \leq n+1$, hence $\GL_n(\bZ_p)$ has an irreducible $\bF_p$-representation
with infinite-dimensional cohomology; and indeed it turns out that $H^*(\GL_{p-1}(\bZ_p), \bF_p)$ is infinite-dimensional whenever~$p$ is odd~\cite[Theorem~1.10]{Henncentralizers}.
\end{rk}

\begin{rk}
We expect that the methods of Section~\ref{graded Lie algebras} should apply to arbitrary connected reductive groups over~$\bQ_p$, and the methods of Section~\ref{cohomology}
should apply to unramified groups over~$\bQ_p$ and their inner forms.
As an example, motivated by applications in algebraic topology, we consider in Section~\ref{Morava} a central division algebra $D/\bQ_p$
of invariant~$1/n$, and we compute $H^*(\cO_D^\times, \bF_{p^n})$ for~$p > n+1$.
The group~$\cO_D^\times$ is closely related to the Morava stabilizer group of height~$n$ in stable homotopy theory, whose $\bF_{p^n}$-cohomology has 
recently been studied in~\cite{KangSalch}, by different methods which involve lifting to characteristic zero, and so require more stringent assumptions than $p > n+1$
(namely, that~$p$ should be larger than an inexplicit function of~$n$).
\end{rk}

\subsection{Acknowledgments.}
AD would like to thank Konstantin Ardakov, for sharing his results about the spectral sequence~\eqref{Lazard spectral sequence}, and Beth Romano, 
for useful discussions about graded Lie algebras, and comments on an earlier draft of this note.
AD was supported by a Royal Society University Research Fellowship.
 
BLH would like to thank Michael Larsen for useful discussions on the question studied in this note.
BLH ~acknowledges support from the National Science Foundation under grants Nos.~DMS-1952678 and DMS-2302619 and the Alfred P.~Sloan Foundation.

Some of the ideas in Section~\ref{cohomology} were first conceived during our on-going joint work \cite{DLW} with Junho Won, and we want to thank him for the collaboration.
We are also grateful to Toby Gee for comments on an earlier draft of this note.

\section{Background.}\label{background}
Let~$K/\bQ_p$ be a finite extension, with ring of integers~$\cO_K$, residue field~$k \cong \bF_{p^f}$, and ramification degree~$e$.
We write~$v$ for the unique surjective valuation $v: K^\times \to \bZ$ and fix a uniformizer~$\varpi$ of~$\cO_K$.
When~$e = 1$, we let~$\varpi = p$.
If~$i, j$ are numbers, the symbol~$\delta_{i, j}$ denotes~$1$ if~$i = j$ and~$0$ if~$i \ne j$.
Similarly, if~$P$ is a statement, the symbol~$\delta_P$ denotes~$1$ if~$P$ is true and~$0$ if~$P$ is false.

\subsection{Reductive groups.}
Let~$G/K$ be an almost-simple, simply connected group with a Borel subgroup~$B$ and a split maximal torus $T \subset B$.
In Section~\ref{reductive groups} we will allow~$G$ to be a split reductive group over~$K$.
Let~$\fg_K$ be the $K$-Lie algebra of~$G(K)$, and similarly let~$\ft_K := \Lie(T(K)), \fb_K := \Lie(B(K))$.
We write~$\Phi \subset X^*(T)$ for the roots, $\Phi^+ \subset \Phi$ for the $B$-positive roots, $\Delta \subset \Phi^+$ for the simple roots, $W$ for the Weyl group of~$T$ in~$G$,
and~$\ell$ for the length function on~$W$ corresponding to~$\Delta$.
We write $\height : \Phi \cup \{0\} \to \bZ$ for the height function corresponding to~$\Delta$, 
normalized so that $\height(-\alpha) = -\height(\alpha)$ for all~$\alpha \in \Phi^+$, and $\height(0) = 0$. 
We write~$h$ for the Coxeter number of~$\Phi$, so that $h = \height(\alpha_{\max}) + 1$, where~$\alpha_{\max} \in \Phi^+$ is the highest root.

If~$\alpha \in \Phi$, we write~$U_\alpha$ for the corresponding root subgroup, and if $\alpha \in \Phi \cup \{0\}$, we write 
$\fg_\alpha \subset \fg$ for the corresponding eigenspace.
We fix a Chevalley system~$(G, B, T)$ over~$K$, i.e.\ a choice of nonzero $X_\alpha \in \fg_\alpha$ for all~$\alpha \in \Delta$ satisfying the properties in~\cite[Definition~2.9.8(1)]{KPbook}. 
For all~$\alpha \in \Phi$, this gives rise to an isomorphism
\[
u_\alpha: \Ga \to U_\alpha
\]
defined over~$K$, and such that $du_\alpha(1) = X_\alpha$.
Composing~$u_\alpha^{-1}$ with~$v$ we obtain a valuation of the root datum of~$G$, i.e.\ a collection of maps
\[
v_\alpha: U_\alpha(K) \to \bZ \cup \{+\infty\}
\]
satisfying the axioms in~\cite[Definition~6.1.2]{KPbook}.
We write~$U_\alpha(\varpi^i\cO_K) := v_\alpha^{-1}[i, +\infty]$.
The standard apartment of~$G$, denoted~$\cA$, is the set of all valuations equipollent to~$v_\alpha$ in the sense of~\cite[Definition~6.1.7]{KPbook}.
It is an affine space under~$V = X_*(T) \otimes_{\bZ} \bR$, and the choice of~$v_\alpha$ specifies an isomorphism $\cA \cong V$.
The affine root system of~$G$ becomes identified with $\Psi = \{\alpha+n : \alpha \in \Phi, n \in \bZ\}$, viewed as a set of affine functions $\cA \to \bR$.
We write~$C$ for the dominant base alcove of~$V$ under~$\Psi$.
As usual, we write $\rho := \frac 1 2\sum_{\alpha \in \Phi^+}\alpha$ for the half-sum of positive roots,
and $w \cdot \lambda := w(\lambda+\rho) - \rho$ for all~$\lambda \in X^*(T)$.

\subsection{Lie algebras.}\label{Lie algebras}
By~\cite[Remark~2.9.9]{KPbook}, the Chevalley system~$\{X_\alpha: \alpha \in \Phi\}$ extends to a Chevalley basis for~$\fg_K$.
In more detail, to each root~$\alpha$ we associate $H_\alpha := d \alpha^\vee(1) \in \ft_K$.
Then $\{H_\alpha: \alpha \in \Delta\}$ is a $K$-basis of $\ft_K$, and we have the structure constants
\begin{gather*}
[H_\alpha, X_\beta] = \langle \alpha^\vee, \beta \rangle X_\beta\\
[X_\alpha, X_{-\alpha}] = H_{\alpha}\\
[X_\alpha, X_\beta] = N_{\alpha, \beta} X_{\alpha + \beta} \text{ if $\alpha, \beta \in \Phi$ are not proportional,}
\end{gather*}
for some $N_{\alpha, \beta} \in \bZ$
such that $|N_{\alpha, \beta}| = r+1$, where~$r$ is the largest integer for which $\beta-r\alpha \in \Phi$.
The Chevalley basis defines a $\bZ$-form~$\fg_\bZ$ of~$\fg_K$ with triangular decomposition $\fg_\bZ = \fn_\bZ^{-} \oplus \ft_\bZ \oplus \fn_\bZ$.
As usual, we put $\fb_\bZ := \ft_\bZ \oplus \fn_\bZ, \fb_{\bZ}^{-} := \ft_\bZ \oplus \fb_{\bZ}^-$.
We write $\fg_k := \fg_\bZ \otimes_{\bZ} k$, and similarly for $\fn_k, \fn_k^{-}, \ft_k, \fb_k, \fb_k^{-}$.
When there is no danger of confusion, we will often drop~$k$ from the notation (but note that $\fg$ denotes the split $\bF_p$-form of~$\fg_k$ in Section~\ref{cohomology}).

We define $\fg[v] := \fg \otimes_{k} k[v]$, where~$v$ is an indeterminate, and we define~$\tld \fg$ as the preimage of~$\fn$ under the reduction map $\fg[v] \to \fg$.
Then
\[
\tld \fg = v\fn^{-}[v] \oplus v \ft[v] \oplus \fn[v].
\]
Recall that $\fg$ admits a $\bZ/h\bZ$-grading, sometimes called the Coxeter grading, defined by 
\[
\fg[i] := \bigoplus_{\height(\alpha) \equiv i \operatorname{mod} h} \fg_{\alpha}.
\]
For any~$e>0$, we define a $\frac{1}{he}\bZ$-grading on~$\fg[v]$ by
\begin{gather*}
\deg(X_\alpha \otimes v^i) := \height(\alpha)/he + i/e \text{ if } \alpha \in \Phi\\
\deg(H \otimes v^{i}) := i/e \text{ if } H \in \ft.
\end{gather*}
Then $\tld \fg$ is a positively graded $k$-Lie subalgebra of~$\fg[v]$. 
We denote it by~$\tld \fg_e$.
There is furthermore a $T$-action on $\fg[v] = \fg \otimes_k k[v]$, namely the tensor product of the adjoint action on~$\fg$ with the trivial action on~$k[v]$.
Then $\tld \fg \subset \fg[v]$ is a $T$-submodule.

Observe that $\fg[v]$ and $\tld \fg_e$ are finite free $k[v]$-modules of the same rank, and they are $k[v]$-Lie algebras 
(i.e.\ the bracket is $k[v]$-bilinear).
Let~$\lambda \in k^\times$ be the image of~$p/\varpi^e$.
We will write~$\epsilon : \tld \fg_e \to \tld \fg_e$ for the degree-one operator of multiplication by~$\lambda v^e$.
The following notation will sometimes be useful: if $i \geq 1$, and $\alpha$ is a root such that $\height(\alpha) \equiv i \mod h$,
then we write
\[
i_\alpha := (i-\height(\alpha))/h \in \bZ_{\geq 0}.
\]

\begin{lemma}\label{alternative presentation of tld g_e}
Let~$i \geq 1$. 
If~$h \nmid i$, then
$\{X_\alpha \otimes v^{i_\alpha}: \height(\alpha) \equiv i \mod h\}$ is a $k$-basis of $\gr^{i/he} \tld \fg_e$.
If~$h \mid i$, then $\{H_\alpha \otimes v^{i/h} : \alpha \in \Delta\}$ is a $k$-basis of $\gr^{i/he} \tld \fg_e$.
\end{lemma}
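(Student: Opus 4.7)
The plan is to work one degree at a time: fix $i \geq 1$, enumerate the $k$-basis vectors of $\fg[v]$ of degree $i/(he)$ in the $\frac{1}{he}\bZ$-grading, and keep those that lie in $\tld\fg_e$. First I would use the Chevalley basis to exhibit
\[
\{X_\alpha \otimes v^j : \alpha \in \Phi, j \geq 0\} \cup \{H_\beta \otimes v^j : \beta \in \Delta, j \geq 0\}
\]
as a $k$-basis of $\fg[v]$, each element being homogeneous: $X_\alpha \otimes v^j$ has degree $(\height(\alpha) + jh)/(he)$ and $H_\beta \otimes v^j$ has degree $j/e$. Since $\tld\fg_e = v\fn^{-}[v] \oplus v\ft[v] \oplus \fn[v]$ is a graded $k$-subspace of $\fg[v]$, a basis of $\gr^{i/(he)} \tld\fg_e$ consists exactly of those basis vectors of the right degree that additionally belong to $\tld\fg_e$.

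Next I would split into the two cases. If $h \nmid i$, then the degree condition picks out, for each $\alpha$ with $\height(\alpha) \equiv i \bmod h$, the unique exponent $j = i_\alpha = (i - \height(\alpha))/h$, and no torus basis element, since $jh = i$ has no integer solution. For $\alpha \in \Phi^+$ the unique representative of $i \bmod h$ in $[1, h-1]$ is at most $i$, so $i_\alpha \geq 0$ and $X_\alpha \otimes v^{i_\alpha} \in \fn[v]$. For $\alpha \in \Phi^-$ the difference $i - \height(\alpha) > 0$ is a positive multiple of $h$, forcing $i_\alpha \geq 1$ and $X_\alpha \otimes v^{i_\alpha} \in v\fn^{-}[v]$. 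In either case the element lies in $\tld\fg_e$, giving the asserted basis.

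If instead $h \mid i$, no root vector of the right degree lies in $\tld\fg_e$: root heights belong to $[-(h-1), h-1] \setminus \{0\}$, so none is congruent to $0$ modulo $h$. The only degree-$i/(he)$ basis elements are then the $H \otimes v^{i/h}$ with $H \in \ft$, and since $i/h \geq 1$ these automatically lie in $v\ft[v] \subset \tld\fg_e$; the claimed basis then follows from the Chevalley decomposition $\ft_k = \bigoplus_{\alpha \in \Delta} k H_\alpha$. The only place requiring any care is tracking whether $j \geq 0$ suffices or we need $j \geq 1$ for membership in $\tld\fg_e$; this is controlled entirely by the sign of $\height(\alpha)$, and is the only thing separating this verification from pure bookkeeping.
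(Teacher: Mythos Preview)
Your argument is correct and is exactly the direct computation the paper has in mind; the paper's own proof is just the sentence ``This is a direct computation.'' Your write-up simply fills in the bookkeeping (tracking which basis vectors of $\fg[v]$ have degree $i/(he)$ and which of those lie in $\tld\fg_e$), and every step checks out, including the sign-of-height case split governing $i_\alpha \geq 0$ versus $i_\alpha \geq 1$.
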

\begin{proof}
This is a direct computation.
\end{proof}

\begin{corollary}\label{alternative presentation of tld g}
Assume~$e = 1$ and~$i \geq 1$.
Then there are isomorphisms $\gr^{i/h}\tld \fg_1 \to \fg[i]$ that are equivariant for the $T$-action and the Lie bracket, 
and such that
$v: \gr^{i/n} \tld \fg_1 \to \gr^{i/n+1} \tld \fg_1$ corresponds to the identity.
\end{corollary}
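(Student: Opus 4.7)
The plan is to define $\phi_i : \gr^{i/h}\tld\fg_1 \to \fg[i]$ on the bases provided by Lemma~\ref{alternative presentation of tld g_e}, by forgetting powers of $v$: for $h \nmid i$ send $X_\alpha \otimes v^{i_\alpha} \mapsto X_\alpha$, and for $h \mid i$ send $H_\alpha \otimes v^{i/h} \mapsto H_\alpha$. Because $|\height(\beta)| \leq h-1$ for every root $\beta$, no root has height divisible by $h$, and so $\fg[i] = \ft$ when $h \mid i$, while otherwise $\fg[i]$ is spanned exactly by the root vectors $X_\alpha$ with $\height(\alpha) \equiv i \bmod h$. In either case $\phi_i$ is a $k$-linear isomorphism. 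Recall also that $\tld\fg_1$ is a graded (not merely filtered) subalgebra of $\fg[v]$, so that $\gr^{i/h}\tld\fg_1$ is simply its degree-$i/h$ component.

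The $T$-equivariance is immediate since $T$ acts through the adjoint representation on $\fg$ and trivially on $k[v]$, so the $T$-characters on corresponding basis elements match. The $v$-compatibility is also direct: $v \cdot (X_\alpha \otimes v^{i_\alpha}) = X_\alpha \otimes v^{i_\alpha + 1} = X_\alpha \otimes v^{(i+h)_\alpha}$, which is a basis element of $\gr^{(i+h)/h}\tld\fg_1$ whose $\phi_{i+h}$-image is again $X_\alpha$, and the same computation applies to the toral basis elements. Hence $v$ corresponds to the identity of $\fg[i] = \fg[i+h]$.

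The Lie bracket compatibility is a case analysis on basis elements, using $k[v]$-bilinearity of the bracket on $\fg[v]$, i.e.\ $[x \otimes v^a, y \otimes v^b] = [x,y] \otimes v^{a+b}$. For two root vectors with $\alpha+\beta \in \Phi$, the identity $i_\alpha + j_\beta = (i+j)_{\alpha+\beta}$ is immediate from additivity of height, so $\phi_{i+j}$ sends the bracket to $N_{\alpha,\beta} X_{\alpha+\beta} = [X_\alpha, X_\beta]$; for $\beta = -\alpha$, one has $h \mid i+j$ since $\height(\alpha)+\height(-\alpha)=0$, and the bracket is $H_\alpha \otimes v^{(i+j)/h}$, mapping to $H_\alpha$; the mixed cases $[H_\alpha \otimes v^{i/h}, X_\beta \otimes v^{j_\beta}]$ and $[H_\alpha \otimes v^{i/h}, H_\beta \otimes v^{j/h}]$ work the same way, the latter being zero on both sides. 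I do not foresee any real obstacle; the corollary is essentially a bookkeeping consequence of Lemma~\ref{alternative presentation of tld g_e} together with the structure constants of the Chevalley basis.
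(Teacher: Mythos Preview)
Your proposal is correct and follows exactly the paper's approach: the paper's proof consists only of the single sentence defining the maps by $X_\alpha \otimes v^{i_\alpha} \mapsto X_\alpha$ and $H_{\alpha} \otimes v^{i/h} \mapsto H_\alpha$, leaving the verifications implicit. You have simply supplied those routine verifications (bijectivity on bases, $T$-equivariance, $v$-compatibility, and the bracket computation via $k[v]$-bilinearity), all of which are correct.
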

\begin{proof}
The isomorphisms are defined by $X_\alpha \otimes v^{i_\alpha} \mapsto X_\alpha$ when~$h \nmid i$, and $H_{\alpha} \otimes v^{i/h} \mapsto H_\alpha$ when~$h \mid i$.
\end{proof}

\begin{example}
Let~$\fg := \mathfrak{sl}_3$.
Let~$E_{i,j}$ be the elementary matrix with $1$ in the $(i,j)$-th entry.
Then $\gr^{1/3}\tld \fg_1$ has basis $\{E_{1,2}, E_{2, 3}, E_{3, 1} \otimes v\}$, $\gr^{2/3}\tld \fg_1$ has basis $\{E_{1, 3}, E_{2, 1} \otimes v, E_{3, 2} \otimes v\}$, and $\gr^1 \tld \fg_1$
has basis $\{(E_{11}-E_{22})\otimes v, (E_{22}-E_{33})\otimes v\}$. 
\end{example}

We can make analogous constructions when $\fg_K = \fz_K \oplus \fg_K^{\der}$ is the direct sum of an abelian Lie algebra and a simple Lie algebra,
by choosing a $\bZ$-lattice in~$\fz_K$, and then putting $\fg_\bZ := \fz_\bZ \oplus \fg_\bZ^{\der}$.
Corollary~\ref{alternative presentation of tld g} remains true in this context, provided we put $\deg(Z \otimes v^i) :=i/e$ for all~$Z \in \fz_k$.

\subsection{Lie algebra cohomology.}\label{Lie algebra cohomology}
Let~$\tau$ be an abelian group, and let~$\fh$ be a $\tau$-graded $k$-Lie algebra.
If~$V$ is an $\fh$-module, the Lie algebra cohomology groups $H^*(\fh, V)$ are computed by $(V \otimes_k \wedge \fh^\vee, \del)$, where the differential~$\del$ is defined
as in~\cite[Section~I.9.17]{Jantzenbook} (see also~\cite[Chapitre~II]{KoszulLie}).
If~$V$ is also $\tau$-graded, and the action of~$\fh$ satisfies
\[
\deg(H . x) = \deg(H)+\deg(x)
\]
for all~$H \in \fh, x \in V$, then~$\del$ has degree~$0$ and so $H^*(\fh, V)$ inherits a natural $\tau$-grading.
We will apply this to situations where $\tau = X^*(T)$, and we will write $H^*(\fh, V)^T$ for the degree zero part (i.e.\ the weight zero part).

The cohomology of the simple Lie algebra~$\fg_K$, with coefficients in the trivial $\fg_K$-module, can be described as follows.
Let $:= \dim_K \ft_K$, and let $m_1 \leq m_2 \leq \ldots \leq m_r \in \bZ$ be the exponents of the root system~$\Phi$ of~$\fg_K$.
By definition, this means that the degrees of the homogeneous generators of $(\Sym \ft_K^\vee)^W$ are the~$m_i+1$ (provided that
$\deg \ft_K^\vee = 1$).
Hence~$m_1 = 1$, and~$m_{r} = h-1$.
For any commutative ring~$R$ with~$1$, we write
\[
\Lambda_R(\Phi) := \wedge(x_{2m_i+1}: 1 \leq i \leq r)
\]
for the free graded-commutative $R$-algebra on generators of degree~$2m_i+1$.
More generally, if~$\Phi$ is a root system with irreducible components~$\Phi_1, \ldots, \Phi_s$, then we define $\Lambda_R(\Phi) := \otimes_{i=1}^s \Lambda_R(\Phi_i)$.
It is an exterior algebra.
We then have a graded $K$-algebra isomorphism
\begin{equation}\label{cohomology of semisimple Lie algebra}
H^*(\fg_K, K) \cong \Lambda_K(\Phi),
\end{equation}
by~\cite[Theorem~18.1]{CELie},
which remains true if~$\fg_K$ is only assumed to be semisimple.
We emphasize that~\eqref{cohomology of semisimple Lie algebra} computes the cohomology of the $K$-Lie algebra $\fg_K$ with coefficients in~$K$.
In Section~\ref{cohomology} we will also have to consider the cohomology of the $\bQ_p$-Lie algebra $\fg_K$ 
with coefficients in~$K$.
They are related by an isomorphism of graded $K$-algebras
\begin{equation}\label{Q_p-Lie to K-Lie}
H^*_{\text{$\bQ_p$-Lie}}(\fg_K, K) \cong H^*_{\text{$K$-Lie}}(\fg_K \otimes_{\bQ_p} K, K).
\end{equation}
We will often apply this in contexts where~$K/\bQ_p$ is Galois, in which case $\fg_K \otimes_{\bQ_p} K$ decomposes as a direct product $\prod_{\sigma \in \Gal(K/\bQ_p)}\fg_K^{\sigma}$,
and the $\bQ_p$-Lie cohomology can therefore be computed using the K\"unneth formula.

Let $\fn := \fn_\bZ \otimes_\bZ k$ be as in Section~\ref{Lie algebras}.
Under the assumption $p \geq h-1$ we have the characteristic~$p$ version of Kostant's theorem:
\begin{equation}\label{Kostant's theorem}
H^i(\fn, k) \cong \bigoplus_{\ell(w) = i}k(w \cdot 0)
\end{equation}
as $T$-modules, see~\cite[Theorem~2.5]{FPdiscrete}.
Hence for every~$\lambda \in X^*(T)$ we have
\begin{equation}\label{twisted Kostant's theorem}
H^i(\fn, \lambda) \cong \bigoplus_{\ell(w) = i}k(w \cdot 0 + \lambda).
\end{equation}

\subsection{Frobenius kernels.}
For any affine algebraic $k$-group $\cG$ we write~$\cG_1$ for its first Frobenius kernel.
Its Hopf algebra of functions is the $p$-restricted universal enveloping algebra of~$\Lie(\cG)$.
If~$V$ is a $\cG$-module, there is a spectral sequence
\begin{equation}\label{ordinary to restricted}
E_1^{i, j} := H^{j-i}(\Lie \cG, V) \otimes_k (\Sym^i(\Lie \cG)^\vee)^{(1)} \Rightarrow H^{i+j}(\cG_1, V),
\end{equation}
which is a version of the May spectral sequence for restricted Lie algebra cohomology,
see~\cite[Lemma~I.9.16, Proposition~I.9.20]{Jantzenbook}.
Here and in what follows, for any $k$-vector space~$W$ and any~$j \in \bZ$, the symbol~$W^{(j)}$ denotes the $k$-vector space with the same abelian group structure as~$W$, 
and scalar multiplication given by~$b \cdot v = b^{1/p^j}v$.
If~$W$ is a $\cG$-module, and $j \geq 0$, then~$W^{(j)}$ is naturally a $\cG$-module, by~\cite[Section~I.9.10]{Jantzenbook}.
For example, if~$\cG = T$ and~$j \geq 0$, then~$W^{(j)}$ is the $T$-module obtained by multiplying all weights by~$p^j$.

If~$\cG$ is reduced, the assumption of~\cite[Proposition~I.6.6]{Jantzenbook} is satisfied (by~\cite[Section~I.6.5(2), Proposition~I.9.5]{Jantzenbook}), and so 
there is also the Hochschild--Serre spectral sequence for $\cG_1 \subset \cG$, which takes the form 
\begin{equation}\label{Hochschild--Serre for Frobenius kernel}
E_{2}^{i, j}:= H^i(\cG/\cG_1, H^j(\cG_1, V)) \Rightarrow H^{i+j}(\cG, V).
\end{equation}

\subsection{Filtrations.}
Recall that a $p$-filtration on a group~$\Gamma$ is a function $\omega : \Gamma \setminus \{1\} \to (0, + \infty)$ such that, if we put $\omega(1) = +\infty$, then
\begin{enumerate}
\item $\omega(xy^{-1}) \geq \min\{\omega(x), \omega(y)\}$,
\item $\omega([x, y]) \geq \omega(x)+\omega(y)$,
\item $\omega(x^p) \geq \varphi(\omega(x)) := \min \{p\omega(x), \omega(x)+1\}$.
\end{enumerate}
See~\cite[II.1.2.10]{Lazard}, and note that we add the assumption that $\omega(x)$ is finite when~$x \ne 1$. 
We will always work with $p$-filtrations whose image~$\tau$ in~$(0, + \infty)$ has the form $n^{-1}\bZ$ for some $n \in \bZ_{>0}$, and is therefore stable under~$\varphi$.
The $p$-filtration~$\omega$ gives rise to a decreasing filtration of~$\Gamma$ by normal subgroups 
\[
\Fil^i \Gamma := \{x \in \Gamma: \omega(x) \geq i\}
\]
such that $[\Fil^i \Gamma, \Fil^j \Gamma] \subset \Fil^{i+j}\Gamma$.
We also write
\[
\Fil^{i+} \Gamma := \{x \in \Gamma: \omega(x) > i\}.
\]
If~$x \in \Gamma$, we write~$\symb(x) \in \gr \Gamma$ for the principal symbol of~$x$ with respect to~$\Fil^i$, i.e.\ the image of~$x$ in $\Fil^{\omega(x)}\Gamma/\Fil^{\omega(x)+}\Gamma$. 
Then 
\[
\gr \Gamma := \bigoplus_{i \in \tau}\Fil^i \Gamma/\Fil^{i+}\Gamma
\]
is a $\tau$-graded Lie algebra over~$\bF_p$, whose bracket is defined as follows: if $\omega(x) = i, \omega(y) = j$, then 
\[
[x \Fil^{i+}\Gamma, y\Fil^{j+}\Gamma] := xyx^{-1}y^{-1}\Fil^{(i+j)+}\Gamma.
\]
We also define a function $\epsilon : \gr \Gamma \to \gr \Gamma$ as follows: if~$ \omega(x) = i$, then
\[
\epsilon (x\Fil^{i+}\Gamma) := x^p\Fil^{\varphi(x)+}\Gamma.
\]
Then~\cite[Th\'eor\`eme~II.1.2.11]{Lazard} shows that $\epsilon$ is well-defined, and defines on~$\gr \Gamma$ a structure of mixed Lie algebra~\cite[II.1.2.5]{Lazard}.
Note that $[\symb x, \symb y]$ is not necessarily equal to $\symb(xyx^{-1}y^{-1})$, and $\epsilon(\symb x)$ is not necessarily equal to~$\symb(x^p)$.

A $p$-filtration is called a $p$-valuation if $\omega(x) > 1/(p-1)$ for all~$x \in \Gamma$, and $\omega(x^p) = \varphi(\omega(x)) = \omega(x) + 1$ for all~$x \in \Gamma \setminus \{1\}$~\cite[III.2.1.2]{Lazard}.
If this is the case, then $\epsilon$ defines on $\gr \Gamma$ a structure of free $\bF_p[\epsilon]$-module, and the bracket is $\bF_p[\epsilon]$-bilinear.
A $p$-valuation on~$\Gamma$ is called saturated if $\Gamma$ is separated and complete in the filtration topology, 
and $\omega(x) > p/(p-1)$ implies that~$x$ is a $p$-th power in $\Gamma$~\cite[III.2.1.5, III.2.1.6]{Lazard}.

\subsection{Group cohomology.}
Let~$\Gamma$ be a profinite group, and let~$M$ be a $\bZ_p$-linearly topological, complete $\bZ_p$-module (i.e.\ $M$ is complete, and its topology has a neighborhood basis at zero given
by open $\bZ_p$-submodules) with a continuous action of~$\Gamma$.
For example, $M$ could be $k$, $\cO_K$ or~$K$ with trivial action of~$\Gamma$.
Then we write $H^i(\Gamma, M)$ for the $i$-th continuous cohomology group of~$\Gamma$ with coefficients in~$M$.
This is denoted $H^i_c(\Gamma, M)$ in~\cite{Lazard}, and is defined as the $i$-th cohomology of the complex of (inhomogeneous) cochains of~$\Gamma$ with coefficients in~$M$.
We therefore have an isomorphism 
\begin{equation}\label{universal coefficients I}
H^i(\Gamma, \cO_K) \otimes_{\cO_K} K \isom H^i(\Gamma, K)
\end{equation}
and a short exact sequence
\begin{equation}\label{universal coefficients II}
0 \to H^i(\Gamma, \cO_K) \otimes_{\cO_K} k \to H^i(\Gamma, k) \to H^{i+1}(\Gamma, \cO_K)[\varpi] \to 0.
\end{equation}

Now assume that~$\Gamma$ is a compact open subgroup of~$G(K)$, where~$G$ is a connected reductive group over~$K$.
Let~$\fg_K$ be the Lie algebra of~$G(K)$, viewed as a $\bQ_p$-Lie algebra.
Then~\cite[Th\'eor\`eme~2.4.10]{Lazard} gives an isomorphism
\[
H^*(\Gamma, K) \isom H^*(\fg_K, K)^\Gamma
\]
of graded $K$-algebras, where the $\Gamma$-action on $H^*(\fg_K, K)$ is the adjoint action.
However, as explained in~\cite[Section~3]{CasselmanWigner}, the adjoint action of $G(K)$ on $H^*(\fg_K, K)$ is trivial, because its kernel is open in the
locally profinite topology, and closed in the Zariski topology.
Hence we obtain an isomorphism
\begin{equation}\label{characteristic zero Lazard isomorphism}
H^*(\Gamma, K) \isom H^*(\fg_K, K).
\end{equation}

\subsection{Koszul complexes.}\label{Koszul complexes}
Let~$A$ be a commutative $k$-algebra.
Let~$V$ be a finite-dimensional $k$-vector space, and let $\delta: V \to A$ be a $k$-linear map.
Regard $A \otimes_k \wedge V$ as a graded $k$-algebra, with degree~$i$ component given by $A \otimes_k \wedge^i V$.
Then there is a unique $k$-linear map
\[
d: A \otimes_k \wedge V \to A \otimes_k \wedge V,
\]
of degree~$-1$, such that $d|_A = 0$ and $d|_V = \delta$, and which is a derivation in the sense that
$d(xx') = d(x)x' +(-1)^{\deg x}xd(x')$.
It is given by the formula
\begin{equation}\label{Koszul differential}
d(r \otimes (v_{1}\wedge \cdots \wedge v_{i})) = \sum_{k=1}^i(-1)^{k+1}r\delta(v_{k}) \otimes (v_{1}\wedge \cdots \wedge \widehat v_{k} \wedge \cdots \wedge v_{i}),
\end{equation}
and it satisfies~$d^2 = 0$.
(This is a slight generalization of~\cite[Proposition~4.3.1.2]{IllusieI}.)

For example, suppose $A = \Sym V$ and $\delta : V = \Sym^1 V \to A$ is the inclusion, and choose a basis~$\{x_0, \ldots, x_n\}$ of~$V$.
Then $\Sym V \otimes_k \wedge V$ becomes isomorphic to the Koszul complex associated to the sequence $x_0, \ldots, x_n \in k[x_0, \ldots, x_n]$
in~\cite[Tag~0621]{Stacks}.
More generally, let $I = (F_1, \ldots, F_r)$ be a homogeneous ideal in~$\Sym V$, and write~$\fm$ for the maximal homogeneous ideal.
Assume that the~$F_i$ are minimal homogeneous generators of~$I$, or equivalently that 
\[I/\fm I = k \lbar F_1 \oplus \cdots \oplus k \lbar F_r.\]
Define $\delta: I/\fm I \to \Sym V, \lbar F_i \mapsto F_i$.
Then $\Sym V \otimes_k \wedge (I/\fm I)$ is isomorphic to the Koszul complex associated to $F_1, \ldots, F_r \in \Sym V$.
Hence it follows from~\cite[Tag~062F]{Stacks} that, if~$(F_1, \ldots, F_r)$ is $\Sym V$-regular, 
then the homology of $\Sym V \otimes_k \wedge (I/\fm I)$ is isomorphic to $(\Sym(V)/I)[0]$.
More precisely, the quotient map $\Sym V \to (\Sym V)/I$ and the augmentation $\wedge (I/\fm I) \to k$ give rise to a map
\begin{equation}\label{graded augmentation}
\Sym V \otimes_k \wedge (I/\fm I) \to (\Sym(V)/I)[0]
\end{equation}
which commutes with the grading and the differential (zero on the target), and induces an isomorphism on homology.

\subsection{dg algebras.}\label{dg algebras}
In this section we review some facts about differential graded $k$-algebras.
We will use cohomological $\bZ$-gradings, and require the differentials to have degree~$+1$.
A graded-commutative 
differential graded $k$-algebra, or $k$-cdga for short, is a commutative monoid in the symmetric monoidal category $\cC$ of complexes of $k$-vector spaces.
If~$R$ is a $k$-cdga, the category $\dgMod(R)$ of left dg $R$-modules coincides with the category of left $R$-modules in~$\cC$.
Since~$\cC$ is cocomplete and closed, \cite[Lemma~2.2.2]{HSS} shows that $\dgMod(R)$ is symmetric monoidal under
the tensor product
\begin{equation}\label{tensor product of dg modules}
M \otimes_R N := \colim(M \otimes_k R \otimes_k N \rightrightarrows M \otimes_k N)
\end{equation}
where the two arrows describe the action of~$R$ on~$M$ and~$N$; the commutativity constraint in~$\cC$ is used to turn~$M$ into an $(R, R)$-bimodule.
We define an $R$-cdga to be a commutative monoid in $\dgMod(R)$.
By general principles, if~$M$ and~$N$ are $R$-cdga, then $M \otimes_R N$ has a natural structure of $R$-cdga.
As usual, we say that a morphism $f: M \to N$ in $\dgMod(R)$ is a quasi-isomorphism 
if it induces an isomorphism on cohomology.

We will be mostly interested in the case $R = \Sym (\ft^\vee[-2])$, which is a polynomial ring generated in degree~$2$, with zero differential. 
It is a $k$-cdga.
The objects of $\dgMod(R)$ are graded $R$-modules together with an $R$-linear differential of degree~$+1$.
The $R$-module underlying $M \otimes_R N$ is the usual tensor product, the grading has the property that $\deg(m \otimes n) = \deg(m)+\deg(n)$ if~$m, n$ are homogeneous, and
the differential is defined by $d(m \otimes n) = d_M(m) \otimes n +(-1)^{\deg(m)}m \otimes d_N(n)$ on homogeneous elements.
An $R$-cdga is a dg $R$-module~$M$ together with a map $\mu: M \otimes_R M \to M$ in $\dgMod(R)$ satisfying associativity, unit and commutativity constraints.  

Examples of $R$-cdga are obtained by changing the grading on the Koszul complexes
of Section~\ref{Koszul complexes}, in the following way.
If~$I$ is a homogeneous ideal in $R$, then~$R/I$ is an $R$-cdga with zero differential and the quotient grading.
The differential~\eqref{Koszul differential} defines a structure of $R$-cdga on
\begin{equation}\label{graded Koszul}
R \otimes_k \wedge (I/\fm I[1]).
\end{equation}
More precisely, $I$ inherits a grading as a submodule of~$R$, and we take the tensor product grading on~\eqref{graded Koszul}.
The differential~\eqref{Koszul differential} has then total degree~$+1$, as required.
(In contrast with Section~\ref{Koszul complexes}, note that~\eqref{graded Koszul} is not a complex
of modules over~$\Sym \ft^\vee$: its graded components are finite-dimensional $k$-vector spaces.)
If~$I$ is generated by a regular sequence, the augmentation map~\eqref{graded augmentation} becomes a quasi-isomorphism of $R$-cdga
\begin{equation}\label{dg augmentation}
    R \otimes_k \wedge (I/\fm I[1]) \to R/I.
\end{equation}
The following variant on~\eqref{graded Koszul} will also be used.
If~$J \subset R$ is a homogeneous ideal, and $I = \fm$ is the maximal ideal, then $I/\fm I[1] = \ft^\vee[-1]$, and we can take the base change of~\eqref{graded Koszul} to~$R/J$:
\begin{equation}\label{Koszul base change}
(R/J) \otimes_R (R \otimes_k \wedge \ft^\vee[-1]).
\end{equation}
Note that the underlying graded $R$-algebra of~\eqref{Koszul base change} 
is $(R/J) \otimes_k \wedge \ft^\vee[-1]$, and that the differential is uniquely determined by $d|_{R/J}$ (which is zero) and
$d|_{\ft^\vee[-1]}$ (which is a $k$-linear surjection of degree~$1$ from~$\ft^\vee[-1]$ to the degree~$2$ part of~$R/J$).

If~$P$ is an $R$-cdga of the form~\eqref{graded Koszul}, then the dg $R$-module underlying~$P$ is \emph{semi-free} in the sense of~\cite[Definition~11.4.3]{Yekutieli}.
To see this, since the differential on $R$ is zero, we need to construct an exhaustive increasing filtration~$\Fil_i(P)$ of~$P$ by dg-submodules, such that
$\gr_i(P)$ has zero differential, and is free as a graded $R$-module, for all~$i$.
For this, it suffices to define
\[
\Fil_i(P) := R \otimes_k \Fil_i \left (\wedge (I/\fm I)[1] \right ),
\]
where~$\wedge (I/\fm I)[1]$ is filtered by total degree.

Since~$P$ is semi-free, it follows from~\cite[Theorem~11.4.14, Proposition~10.3.4]{Yekutieli} that~$P$ is a $K$-flat dg $R$-module, and it follows from~\cite[Lemma~12.3.2]{Yekutieli}
that the tensor product functor $- \otimes_{R} P$ preserves quasi-isomorphisms in $\dgMod(R)$. 
Hence it also preserves quasi-isomorphisms of $R$-cdga.
In the proof of Proposition~\ref{upper bound} we will apply this functor to a quasi-isomorphism of the form~\eqref{dg augmentation}.

\subsection{Cohomology of the flag variety.}
To lighten notation, in this section we write~$G$ and~$B$ instead of~$G_k$ and~$B_k$ (i.e.\ $G_k$ is the $k$-group with the same based root datum as~$G$).
Let~$X := G/B$ and write $\cO := \cO_X$ for the structure sheaf, and~$\Omega^i := \wedge^i \Omega^1_{X/k}$.
Recall from e.g.\ \cite[Proposition~II.6.18]{Jantzenbook} that $H^i(X, \Omega^j) = 0$ if~$i \ne j$, and
$H^i(X, \Omega^i)$ is a trivial $G$-module of dimension
\begin{equation}\label{dimension of Hodge cohomology}
\dim_k H^i(X, \Omega^i) = |\{w \in W: \ell(w) = i\}|.
\end{equation}
Furthermore, we have the following connections between $H^i(X, \Omega^i)$, the cohomology of~$B$, and the Chow ring of~$X$.
\begin{lemma}\label{reduction to Hodge cohomology}
There is an isomorphism of graded $k$-algebras
\[
\bigoplus\nolimits_{i, j}H^i(B, \wedge^j (\fg/ \fb)^\vee) \cong \bigoplus\nolimits_{i, j} H^i(G/B, \Omega^j).
\]
\end{lemma}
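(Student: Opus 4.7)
The plan is to use the standard identification of the cotangent sheaf of $G/B$ as the $G$-equivariant vector bundle $\cL(-)$ associated to the $B$-module $(\fg/\fb)^\vee$, giving $\Omega^j = \cL(\wedge^j (\fg/\fb)^\vee)$ and in particular $H^i(G/B, \Omega^j) = R^i \mathrm{ind}_B^G \wedge^j(\fg/\fb)^\vee$. Writing $V := \wedge^j(\fg/\fb)^\vee$, I would then relate the two sides of the lemma via the Grothendieck spectral sequence
\[
E_2^{p,q} = H^p(G, R^q \mathrm{ind}_B^G V) \Rightarrow H^{p+q}(B, V)
\]
associated to the composition of functors $(-)^G \circ \mathrm{ind}_B^G = (-)^B$ (Frobenius reciprocity).

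By the Hodge cohomology computation recalled in the paragraph preceding the lemma, the term $R^q \mathrm{ind}_B^G V$ equals $H^q(G/B, \Omega^j)$, which vanishes unless $q = j$ and is a trivial $G$-module of dimension $|\{w \in W : \ell(w) = j\}|$ when $q = j$. Consequently the $E_2$-page is concentrated in the single row $q = j$, so the spectral sequence degenerates and gives $H^i(B, V) \cong H^{i-j}(G, H^j(G/B, \Omega^j))$.

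The main technical step, which I expect to be the primary obstacle, is the auxiliary vanishing $H^p(G, k) = 0$ for $p > 0$. I would prove this by the chain $H^p(G, k) = H^p(B, k) = H^p(U, k)^T$: the first equality comes from applying the same spectral sequence to the trivial module (here $\mathrm{ind}_B^G k = k$ and the higher derived inductions vanish), and the second is the Hochschild--Serre spectral sequence for the semidirect product $B = T \ltimes U$, which collapses because the diagonalizable torus $T$ has no higher rational cohomology. For the final vanishing of $H^p(U, k)^T$ in positive degree, I would filter $U$ by its root subgroups $U_\alpha \cong \mathbb{G}_a$ for $\alpha \in \Phi^+$ and observe that the generators of $H^*(\mathbb{G}_a, k)$ carry $T$-weights which are strictly positive integer multiples of $\alpha$ (possibly after Frobenius twist). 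Iterating Hochschild--Serre up this root filtration shows that every weight appearing in $H^{p}(U, k)$ with $p > 0$ lies in the strict positive root cone, so the weight-zero part vanishes outside degree zero.

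Substituting this back into the spectral sequence yields $H^i(B, \wedge^j(\fg/\fb)^\vee) = 0$ for $i \ne j$ and $H^j(B, \wedge^j(\fg/\fb)^\vee) \cong H^j(G/B, \Omega^j)$, which gives the claimed additive isomorphism. Compatibility with the graded $k$-algebra structures follows from the multiplicativity of the Grothendieck spectral sequence: the exterior algebra product on $\wedge(\fg/\fb)^\vee$ induces the wedge product of differential forms on the target via the tensor structure on $\cL(-)$, and both sides are governed by the same multiplicative structure on $E_\infty$.
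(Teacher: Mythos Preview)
Your proposal is correct and takes essentially the same approach as the paper: the same spectral sequence $H^p(G, R^q \ind_B^G V) \Rightarrow H^{p+q}(B, V)$, the same key input $H^r(G, k) = 0$ for $r > 0$ (which the paper simply cites from \cite[Corollary~II.4.11]{Jantzenbook} rather than reproving via $H^*(U,k)^T$), and the same appeal to compatibility of edge maps with cup products. One small quibble: in the usual conventions the nonzero $T$-weights on $H^{>0}(U_\alpha, k)$ are \emph{negative} multiples of $\alpha$ rather than positive, but this is immaterial to your argument since they still lie in a strict half-cone not containing zero.
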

\begin{proof}
    Write~$\mL(-)$ for the functor~\cite[Section~I.5.8]{Jantzenbook} from rational $B$-modules to $G$-equivariant coherent sheaves on $G/B$. 
    Then $\mL(\fg/\fb)$ is the tangent bundle of~$G/B$, and so $\mL(\wedge^j (\fg/\fb)^\vee) = \Omega^j$.
    We thus obtain from~\cite[Proposition~I.5.12]{Jantzenbook} that
    \[
    R^i \ind_B^G \wedge^j(\fg/\fb)^\vee = H^i(G/B, \Omega^j),
    \]
    and so it has trivial $G$-action.
    By~\cite[Proposition~I.4.5(b)]{Jantzenbook}, there is a spectral sequence
    \[
    E_2^{r, s} := H^r(G, R^s \ind_B^G \wedge^j(\fg/\fb)^\vee) \Rightarrow H^{r+s}(B, \wedge^j(\fg/\fb)^\vee).
    \]
    Since $H^r(G, k) = 0$ for~$r > 0$, by \cite[Corollary~II.4.11]{Jantzenbook}, we have $E_2^{r, s} = 0$ for~$r \ne 0$.
    Hence the edge map $H^s(B, \wedge^j (\fg/\fb)^\vee) \to H^s(G/B, \Omega^j)$ is an isomorphism for all~$s, j$.
    Since the edge maps are compatible with cup products,
    this concludes the proof.
\end{proof}

\begin{lemma}\label{cycle class isomorphism}
    Let~$A(G/B)$ be the Chow ring of $G/B$, graded by~$\bZ_{\geq 0}$.
    There is a graded $k$-algebra isomorphism (the cycle class map)
    \[
    \lbar A(G/B) := A(G/B) \otimes_{\bZ} k \to \bigoplus\nolimits_i H^i(G/B, \Omega^i).
    \]    
\end{lemma}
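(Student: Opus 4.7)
The plan is to construct the cycle class map via Chern classes and to verify it is an isomorphism by a dimension count combined with the Borel presentation recalled in the introduction.

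For the construction of $\mathrm{cl}$: any line bundle $\cL$ on a smooth projective $k$-variety $X$ has an Atiyah class $c_1^{\mathrm{Hdg}}(\cL) \in H^1(X, \Omega^1_X)$, given by the boundary of the jet sequence $0 \to \Omega^1_X \otimes \cL \to J^1(\cL) \to \cL \to 0$. This assignment is additive in $\cL$, so extends to a ring map out of $\Sym(\operatorname{Pic}(X) \otimes_\bZ k)$ into $\bigoplus_i H^i(X, \Omega^i_X)$ by multiplicativity. For $X = G/B$, the Borel presentation of~\cite{Demazureinvariants} shows that $\lbar A(G/B)$ is generated as a $k$-algebra in degree~$1$ by the classes $c_1^{\mathrm{Chow}}(\cL(\lambda))$ for $\lambda \in X^*(T)$, so the Atiyah construction descends uniquely to a graded ring homomorphism $\mathrm{cl}: \lbar A(G/B) \to \bigoplus_i H^i(G/B, \Omega^i)$ sending $c_1^{\mathrm{Chow}}(\cL(\lambda))$ to $c_1^{\mathrm{Hdg}}(\cL(\lambda))$.

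For the dimension count, the Bruhat decomposition gives a $\bZ$-basis of $A(G/B)$ by Schubert classes, and therefore $\dim_k \lbar A^i(G/B) = |\{w \in W : \ell(w) = i\}| = \dim_k H^i(G/B, \Omega^i)$ by~\eqref{dimension of Hodge cohomology}. Both sides are thus finite-dimensional graded $k$-vector spaces of the same dimension in each degree, so it suffices to prove that $\mathrm{cl}$ is surjective. Since $\lbar A(G/B)$ is generated in degree~$1$ and $\mathrm{cl}$ is a ring map, this reduces to showing that $\ft^\vee \to H^1(G/B, \Omega^1)$, $\lambda \mapsto c_1^{\mathrm{Hdg}}(\cL(\lambda))$, is surjective. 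Both sides have dimension $|\Delta|$, so it suffices to check that the pairing $\langle c_1^{\mathrm{Hdg}}(\cL(\lambda)), [C_\alpha]\rangle$ is non-degenerate, where $C_\alpha \subset G/B$ is the Schubert curve indexed by a simple root $\alpha \in \Delta$. Since $C_\alpha \cong \bP^1$ and $\cL(\lambda)|_{C_\alpha} = \cO_{\bP^1}(\langle \lambda, \alpha^\vee\rangle)$, this pairing recovers $\langle \lambda, \alpha^\vee\rangle$, and non-degeneracy follows from the fact that $\{\alpha^\vee : \alpha \in \Delta\}$ is dual to the fundamental-weight basis of $X^*(T)$ (in the simply connected case, with the general reductive case reduced to this by stripping off the central torus).

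The main obstacle is the degree-$1$ non-degeneracy statement, which ultimately amounts to the classical computation on $\bP^1$ showing that the first Chern class of $\cO(1)$ generates $H^1(\bP^1, \Omega^1)$. Everything else is a clean dimension-plus-generation argument leveraging the Borel presentation already invoked in the introduction.
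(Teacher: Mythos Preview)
The paper's own proof is a one-line citation to \cite[Section~3]{Srinivas}, where the cycle class map is shown to be an isomorphism for any smooth projective variety admitting an affine cell decomposition, by inducting along the Schubert stratification. Your route via Chern classes and degree-$1$ generation is genuinely different, but it contains a real gap.

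The problematic step is: ``Since $\lbar A(G/B)$ is generated in degree~$1$ and $\mathrm{cl}$ is a ring map, this reduces to showing that $\ft^\vee \to H^1(G/B, \Omega^1)$ is surjective.'' Knowing that the \emph{source} is generated in degree~$1$ only tells you that the image of $\mathrm{cl}$ is the subalgebra of $\bigoplus_i H^i(G/B, \Omega^i)$ generated by its degree-$1$ part; for that subalgebra to be everything you would need the \emph{target} to be generated in degree~$1$, and that is precisely what is at stake. (Concretely: take $A = k[x]/(x^3)$, $B = k[y,z]/(y^2, yz, z^2)$ with $\deg y = 1$, $\deg z = 2$, and $f(x) = y$; then $A$ is generated in degree~$1$, the graded dimensions agree, $f$ is an isomorphism in degree~$1$, yet $f$ is not surjective.) The same confusion appears earlier: the claim that the Atiyah construction ``descends uniquely'' from $\Sym(\operatorname{Pic}(X)\otimes_\bZ k)$ to $\lbar A(G/B)$ presupposes that the defining relations of $\lbar A(G/B)$ already hold among the Hodge Chern classes, which you have not verified and which is essentially equivalent to the lemma. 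The clean fix is to take the cycle class map as given for any smooth $k$-variety and then prove bijectivity via the Schubert cell filtration, as in the cited reference; your degree-$1$ computation with the curves $C_\alpha$ is correct but does not suffice on its own. Note also that generation of $\lbar A(G/B)$ in degree~$1$ already requires $p$ to be coprime to the torsion index, a hypothesis the lemma does not carry.
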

\begin{proof}
See~\cite[Section~3]{Srinivas}.
\end{proof}

\begin{lemma}\label{Borel presentation}
    Let $r := \dim_k \ft$, and assume~$p \geq h$.
    Then there is a graded $k$-algebra surjection 
    \begin{equation}\label{eqn:Borel presentation}
    \Sym \ft^\vee[-1] \cong k[X_1, \ldots, X_r] \to \lbar A(G/B)
    \end{equation}
    whose kernel is generated by homogeneous elements $F_1, \ldots F_r$ such that $\deg(F_i) = m_i+1$.
\end{lemma}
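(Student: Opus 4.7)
The plan is to reduce this statement to Demazure's characteristic-free results on the Chow ring of $G/B$, combined with the invariant theory of the Weyl group acting on $\Sym \ft^\vee$.

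First, I would construct the map in~\eqref{eqn:Borel presentation} as a characteristic class map. Each character $\lambda \in X^*(T)$ gives rise to a $G$-equivariant line bundle $\mL(\lambda)$ on $G/B$, and its first Chern class defines an element $c_1(\mL(\lambda)) \in A^1(G/B)$. Under the identification $X^*(T) \otimes_{\bZ} k \cong \ft^\vee$ (valid since $T$ is split), the assignment $\lambda \mapsto c_1(\mL(\lambda))$ extends to a morphism of graded $k$-algebras $\Sym \ft^\vee[-1] \to \lbar A(G/B)$, where $\ft^\vee$ sits in degree~$1$.

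Next, I would invoke the characteristic-free description of $\lbar A(G/B)$ due to Demazure~\cite{Demazureinvariants, DemazureSchubert}: provided that $p$ is not a torsion prime of $\Phi$, the above map is surjective and its kernel is the ideal generated by the $W$-invariants of positive degree in $\Sym \ft^\vee$. Under our hypothesis $p \geq h$, this exclusion holds automatically, since the torsion primes are all at most~$5$ and at most~$h-1$ in every irreducible type (this follows from the classification of root systems). Surjectivity itself can be deduced from the fact that $A(G/B)$ is free on the Schubert basis, together with the Chevalley--Demazure divided-difference formula expressing Schubert classes via iterated characteristic classes.

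To finish, I would apply the Chevalley--Shephard--Todd theorem in characteristic~$p$: still under $p \geq h$ (and in particular $p$ not a torsion prime), the invariant subring $(\Sym \ft^\vee)^W$ is a polynomial $k$-algebra on homogeneous generators $F_1, \ldots, F_r$, whose degrees are precisely the fundamental degrees $m_i+1$ of $W$; the characteristic-$p$ version of this statement (over~$\bZ$ even, modulo torsion primes) is established in~\cite{Demazureinvariants}. The $F_i$ then generate the kernel of~\eqref{eqn:Borel presentation} and, being a system of homogeneous parameters in the Cohen--Macaulay ring $\Sym \ft^\vee$, they form a regular sequence, consistent with the use made of this lemma in the preceding Koszul-complex discussion.

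The only real obstacle is bookkeeping the hypothesis $p \geq h$ against Demazure's conditions, but since the torsion primes are uniformly bounded by~$h-1$ across all types, no case analysis is needed beyond a citation to the classification.
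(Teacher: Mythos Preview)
Your proposal is correct and follows essentially the same route as the paper's proof: both invoke Demazure's results in \cite{Demazureinvariants, DemazureSchubert} to get the characteristic-class map $\Sym_\bZ X^*(T) \to A(G/B)$, argue that the torsion index is coprime to~$p$ under the hypothesis on~$p$ so that the map becomes surjective after~$\otimes_\bZ k$, and then cite \cite[Th\'eor\`eme~3]{Demazureinvariants} for the polynomiality of the invariant ring with generators of degree~$m_i+1$. The only minor difference is that where you assert directly that the kernel equals the ideal of positive-degree $W$-invariants mod~$p$, the paper verifies this via a dimension count (both sides have total $k$-dimension $|W|$), which sidesteps any question about how $\ker c$ behaves under base change.
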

\begin{proof}
Let~$S := \Sym_\bZ X^*(T)$.
As explained in~\cite[Section~4.6]{DemazureSchubert}, there is a ring homomorphism
\[
c: S \to A(G/B)
\]
whose kernel is generated by the homogeneous invariants of~$W$ of positive degree, and whose cokernel is annihilated by the torsion index of~$G$.
If $S^W \to \bZ$ is the augmentation map, it follows that~$c$ factors through an injection $S \otimes_{S^W} \bZ \to A(G/B)$, and so $c \otimes_{\bZ} k$
can be written as a composition
\[
S \otimes_\bZ k\to (S \otimes_\bZ k) \otimes_{S^W \otimes_{\bZ} k} k \xrightarrow{\alpha} \lbar A(G/B).
\]
Since the root system of~$G$ is simply connected and irreducible, 
by~\cite[Proposition~8]{Demazureinvariants} all prime divisors of the torsion index are contained in $\{2, 3, 5\}$, and so $p$ is coprime to the torsion index, since~$p > h$.
Hence $c \otimes_\bZ k$ is surjective, and so~$\alpha$ is surjective.
By~\cite[Th\'eor\`eme~3]{Demazureinvariants}, $S^W \otimes_{\bZ} k$ is a graded polynomial ring, 
generated by homogeneous elements~$F_1, \ldots, F_r$ such that $\deg(F_i) = m_i+1$.
(In fact, \emph{loc.\ cit.} shows that the degrees of the homogeneous generators are the same as in characteristic zero, provided that~$p$
is coprime to the torsion index of~$G$.)
Furthermore, the total $k$-dimension of $(S \otimes_{\bZ} k) \otimes_{S^W \otimes_{\bZ} k} k$ is~$|W|$ (by e.g.~\cite[Th\'eor\`eme~2(c)]{Demazureinvariants}), 
which is the same as the total $k$-dimension of $\lbar A(G/B)$, by Lemma~\ref{cycle class isomorphism}
and~\eqref{dimension of Hodge cohomology}.
This implies that~$\alpha$ is an isomorphism and concludes the proof of the lemma.
\end{proof}

\section{Graded Lie algebras.}\label{graded Lie algebras}
In this section we recall the Moy--Prasad filtration on the pro-$p$ Iwahori subgroups of~$G(K)$,
and we compute its associated graded Lie algebra.
We adopt without further comment the notation of Section~\ref{background}, so that~$G$ is an almost-simple, simply connected group over~$K$ with a fixed Chevalley system, $h$ is the Coxeter number
of the root system of~$G$, and $p > he+1$.

\subsection{Moy--Prasad filtrations.}
Let~$x \in C \subset V$ be the barycenter of the dominant base alcove, so that
\[
\alpha(x) = 1/h \text{ for all } \alpha \in \Delta.
\]
Following~\cite[Lemma~6.1.6]{KPbook}, we associate to~$x$ the valuation of root datum defined by
\[
v_{\alpha, x}: U_\alpha(K) \to \frac 1 h \bZ_{\geq 0} \cup \{+\infty\}, \;\;\; v_{\alpha, x}(u) := v_\alpha(u)+\alpha(x) = v_\alpha(u) + \height(\alpha)/h
\]
for all~$\alpha \in \Phi$.
For all~$i \in \bZ_{\geq 0}$ we write
\[
U_{\alpha, x, i/h} := v_{\alpha, x}^{-1}[i/h, +\infty].
\]
Similarly, as in \cite[(13.1.1)]{KPbook}, we introduce a filtration of~$T(K)$ by letting~$T(K)_0$ be the unique maximal compact subgroup of~$T(K)$, and
\[
T(K)_{i/h} := \{t \in T(K)_0: v(\chi(t)-1) \geq i/h \text{ for all } \chi \in X^*(T)\} \text{ if } i > 0.
\]
Note that by definition
\[
T(K)_{i/h} = T(K)_{\ceiling*{i/h}}.
\]
If~$n \in \bZ_{\geq 0}$, the group $T(K)_n$ admits the following alternative description:
the evaluation map yields an isomorphism
\[
T(K) \cong X_*(T) \otimes_{\bZ} K^\times,
\]
and then $T(K)_n = X_*(T) \otimes_{\bZ} (1+\varpi^n \cO_K)$.
Finally, recall from~\cite[Definition~13.2.1]{KPbook} the groups
\[
G_{x, i/h} := \langle T(K)_{i/h}, U_{\alpha, x, i/h}: \alpha \in \Phi \rangle
\]
for~$i \in \bZ_{\geq 0}$.
These groups admit the following Iwahori-type decomposition.

\begin{lemma}\label{MP decomposition}
Let~$i \in \bZ_{> 0}$.
Then the product map (under any ordering of the factors)
\[
\prod_{\alpha \in \Phi^-} U_\alpha\left (\varpi^{\ceiling*{\frac{i-\height(\alpha)}{h} }}\cO_K \right ) \times T(K)_{\ceiling*{\frac i h}} \times \prod_{\alpha \in \Phi^+} U_\alpha\left (\varpi^{\ceiling*{\frac{i-\height(\alpha)}{h} }}\cO_K \right ) \to G_{x, i/h}
\]
is a bijection.
Furthermore, for all~$\alpha \in \Phi$ and all $z \in \cO_K, w \in 1 + \varpi \cO_K$ we have
\begin{gather*}
u_\alpha(z) \in G_{x, i/h} \text{ if and only if } v(z)+\frac{\height (\alpha)}{h} \geq \frac i h\\
\alpha^\vee(w) \in G_{x, i/h} \text{ if and only if } v(w-1) \geq \frac i h.
\end{gather*}
\end{lemma}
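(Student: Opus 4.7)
The plan is to combine a careful unwinding of definitions with the standard Iwahori-type product decomposition for Moy--Prasad groups at a point of the apartment, and then to deduce the last two characterizations as immediate consequences of uniqueness of the product factorization. First I would identify $U_{\alpha, x, i/h}$ with $U_\alpha(\varpi^{\ceiling*{(i - \height\alpha)/h}} \cO_K)$: by definition $u \in U_{\alpha, x, i/h}$ iff $v_\alpha(u) + \height(\alpha)/h \geq i/h$, and since $v_\alpha$ takes integer values this is equivalent to $v_\alpha(u) \geq \ceiling*{(i - \height\alpha)/h}$, which under the isomorphism $u_\alpha$ yields the desired identification. Likewise $T(K)_{i/h} = T(K)_{\ceiling*{i/h}}$ is immediate from the definitions together with the fact that $v$ takes integer values, and splitness of $T$ gives the parametrization $T(K)_n = X_*(T) \otimes_\bZ (1 + \varpi^n \cO_K)$ for $n \in \bZ_{\geq 0}$.

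For bijectivity I would appeal to the general product decomposition for Moy--Prasad subgroups at a point of the apartment, a standard result of Bruhat--Tits theory developed in Section~13 of~\cite{KPbook}. The key input is the concavity of the function $f : \Phi \cup \{0\} \to \bR$ defined by $f(\alpha) := \ceiling*{(i - \height\alpha)/h}$ and $f(0) := \ceiling*{i/h}$, which follows from the additivity of $\height$ on $\Phi$. Surjectivity of the product map then follows by repeated use of the Chevalley commutation relations: under the concavity assumption, swapping two adjacent factors in a word produces commutator terms indexed by sums of roots of strictly greater height, which already lie inside the given product, and this also proves that the product set is independent of the chosen ordering. Injectivity follows from the $\Ga$-parametrizations $u_\alpha$ together with the standard big-cell decomposition of $G(K)$.

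The two characterizations are then immediate. For $u_\alpha(z) \in U_\alpha(K)$, uniqueness of the decomposition forces $u_\alpha(z) \in G_{x, i/h}$ iff $u_\alpha(z) \in U_{\alpha, x, i/h}$, iff $v(z) \geq \ceiling*{(i - \height\alpha)/h}$, which for integer $v(z)$ is equivalent to $v(z) + \height(\alpha)/h \geq i/h$. For $\alpha^\vee(w) \in T(K)$, uniqueness forces $\alpha^\vee(w) \in T(K)_{\ceiling*{i/h}}$. Since $G$ is simply connected, $\alpha^\vee$ is primitive in the cocharacter lattice, so there exists $\chi \in X^*(T)$ with $\langle \chi, \alpha^\vee \rangle = 1$; then $\chi(\alpha^\vee(w)) = w$, and membership in $T(K)_{\ceiling*{i/h}}$ forces $v(w - 1) \geq \ceiling*{i/h}$. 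Conversely, if $v(w - 1) \geq i/h$, then $v(\chi'(\alpha^\vee(w)) - 1) = v(w^{\langle \chi', \alpha^\vee \rangle} - 1) \geq v(w - 1) \geq \ceiling*{i/h}$ for every $\chi' \in X^*(T)$, giving the reverse inclusion. The main obstacle, though essentially bookkeeping, is pinning down the correct concave function and matching it against the precise form of the product decomposition theorem in the reference.
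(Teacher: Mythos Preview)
Your proposal is correct and follows essentially the same route as the paper: identify $U_{\alpha,x,i/h}$ with $U_\alpha(\varpi^{\lceil (i-\height\alpha)/h\rceil}\cO_K)$ by unwinding the definition of $v_{\alpha,x}$, and then invoke the Iwahori-type product decomposition for Moy--Prasad groups from~\cite{KPbook}. The paper simply cites \cite[Proposition~13.2.5]{KPbook} for the bijection, whereas you sketch its mechanism via a concave function and Chevalley commutators; this extra detail is fine but not needed. Your derivation of the $\alpha^\vee(w)$ criterion, using simple connectedness to produce $\chi$ with $\langle\chi,\alpha^\vee\rangle=1$, is a pleasant explicit check where the paper just says ``direct consequence.''
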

\begin{proof}
    The second claim is a direct consequence of the first, which we now prove.
    By~\cite[Proposition~13.2.5]{KPbook}, the product map is a bijection
    \[
      \prod_{\alpha \in \Phi^-} U_{\alpha, x, i/h} \times T(K)_{\ceiling*{\frac i h}} \times \prod_{\alpha \in \Phi^+} U_{\alpha, x, i/h} \to G_{x, i/h},
    \]
    so it suffices to prove that 
    \[U_{\alpha, x, i/h} = U_\alpha\left (\varpi^{\ceiling*{\frac{i-\height(\alpha)}{h} }}\cO_K \right ),\]
    where for any~$j$ we write $U_\alpha(\varpi^j\cO_K):= v_\alpha^{-1}[j, +\infty]$.
    This is a consequence of the fact that, if $u \in U_\alpha(K)$, then $v_{\alpha, x}(u) \geq i/h$ if and only if 
    \[
    v_\alpha(u) \geq \ceiling*{\frac{i-\height(\alpha)}{h}}.\qedhere
    \]
\end{proof}
Lemma~\ref{MP decomposition} implies that $I := G_{x, 0}$ is an Iwahori subgroup of~$G$, $I_1:= G_{x, 1/h}$ is the pro-$p$ Sylow subgroup of~$I$, and
\[
r/h \mapsto G_{x, r/h}
\]
is the Moy--Prasad filtration of~$I$.
We rescale this filtration as follows; we need the rescaling to have $\omega(x^p) = \omega(x)+1$.

\begin{defn}\label{MP filtration}
If~$z \in I_1 \setminus \{1\}$, let $\omega(z) \in (he)^{-1}\bZ$ be the unique number such that $z \in G_{x, e\omega(z)}\setminus G_{x, e\omega(z)+1/h}$.
Equivalently, $\omega(z) := \max\{t \in (he)^{-1}\bZ: z \in G_{x, et}\}$.
\end{defn}

It follows from Lemma~\ref{MP decomposition} that
\begin{equation}\label{values of omega}
\omega(u_\alpha(\varpi^i)) = \height(\alpha)/he+i/e, \;\;\; \omega(\alpha^\vee(1+\varpi^j)) = i/e, 
\end{equation}
for all~$\alpha \in \Phi, i \in \bZ_{\geq 0}, j \in \bZ_{>0}$.
Furthermore, given $x_\alpha \in U_\alpha(K) \cap I_1$ and $t \in T(K) \cap I_1$, we have
\[
\omega \left (\prod_{\alpha \in \Phi^-}x_\alpha \cdot t \cdot \prod_{\alpha \in \Phi^+} x_\alpha \right ) = \min\{\omega(t), \omega(x_\alpha) : \alpha \in \Phi\},
\]
under any ordering in the product, by the bijectivity part of Lemma~\ref{MP decomposition}.
Hence~\cite[Proposition~3.4]{LahiriSorensen} applies and shows that~$\omega$ is a saturated 
$p$-valuation on~$I_1$, with values in $(he)^{-1}\bZ_{>0}$.
(Note that the valuation on~$K$ in~\cite{LahiriSorensen} is normalized to give $v(p) = 1$, whereas for us~$v(p) = e$.)

We will write~$\Fil^\bullet(I_1)$ for the resulting filtration of~$I_1$, so that
$\Fil^{i/he}(I_1) = G_{x, i/h}$. 
The associated graded $\gr(I_1)$ is therefore an $(he)^{-1}\bZ_{>0}$-graded $\bF_p$-Lie algebra, 
with an operator $\epsilon$ defining a structure of $\bF_p[\epsilon]$-Lie algebra on~$\gr(I_1)$.
Note also that
\begin{equation*}\label{finite torus}
I \cong T(k) \ltimes I_1
\end{equation*}
where~$T(k) = X_*(T) \otimes_\bZ k$ and the map $T(k) \to I$ is induced by the Teichm\"uller lift.
The following is the main result of this section.

\begin{thm}\label{graded MP}
    Assume that $he < p-1$.
    Then~$\omega$ is a saturated, $I$-invariant $p$-valuation on~$I_1$, $\gr(I_1)$ is naturally a $k[\epsilon]$-Lie algebra, and 
    there is an isomorphism $\gr(I_1) \cong \tld \fg_e$ of graded $k[\epsilon]$-Lie algebras, which 
    intertwines the action of~$T(k) \subset I$ on~$\gr(I_1)$ with the restriction to $T(k)$ of the $T$-action on $\tld \fg_e$.
\end{thm}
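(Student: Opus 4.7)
The plan is to handle the structural claims on $\omega$ first, then construct an explicit graded isomorphism $\Psi\colon \gr(I_1) \to \tld\fg_e$ and verify compatibility with the bracket, the operator $\epsilon$, and the $T(k)$-action separately. Saturation of $\omega$ as a $p$-valuation has already been recorded as a consequence of \cite[Proposition~3.4]{LahiriSorensen}. For $I$-invariance I would use the decomposition $I \cong T(k) \ltimes I_1$ and verify that for $t \in T(k)$ the conjugation formulas $t u_\alpha(z) t^{-1} = u_\alpha(\alpha(t)z)$ and $t \alpha^\vee(w) t^{-1} = \alpha^\vee(w)$ preserve $\omega$. The $k[\epsilon]$-module structure on $\gr(I_1)$ arises from the usual identifications of the root-subgroup and torus graded pieces with $k$ and $\ft_k$, which are compatible with the $T(k)$-weight decomposition of each graded component.

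Next I would define $\Psi$ on the generators furnished by Lemma~\ref{MP decomposition}. For $h \nmid i$ and $\alpha \in \Phi$ with $\height(\alpha) \equiv i \pmod h$, send $\symb(u_\alpha(c \varpi^{i_\alpha}))$ to $\bar c \cdot X_\alpha \otimes v^{i_\alpha}$, where $c \in \cO_K$ and $\bar c$ denotes its image in $k$; for $h \mid i$ and $\alpha \in \Delta$, send $\symb(\alpha^\vee(1+c\varpi^{i/h}))$ to $\bar c \cdot H_\alpha \otimes v^{i/h}$. Bijectivity on each graded piece is immediate from Lemma~\ref{alternative presentation of tld g_e}. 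The meat of the argument is the bracket compatibility. For non-proportional roots $\alpha, \beta$ the Chevalley commutator formula
\[
[u_\alpha(s), u_\beta(t)] = \prod_{i,j > 0} u_{i\alpha + j\beta}(C_{i,j,\alpha,\beta} s^i t^j)
\]
has leading $\omega$-contribution from $(i,j) = (1,1)$, reproducing the structure constant $N_{\alpha, \beta}X_{\alpha+\beta}$ in $\tld\fg_e$, while higher-order terms lie in strictly larger filtration and die under $\symb$. For the root-coroot case, the identity $\alpha^\vee(1+z) u_\beta(s) \alpha^\vee(1+z)^{-1} = u_\beta\bigl((1+z)^{\langle \alpha^\vee, \beta\rangle}s\bigr)$ combined with the congruence $(1+z)^{\langle \alpha^\vee, \beta\rangle} \equiv 1 + \langle \alpha^\vee, \beta\rangle z \pmod{z^2}$ recovers $[H_\alpha, X_\beta] = \langle \alpha^\vee, \beta\rangle X_\beta$. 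The proportional-root case $[u_\alpha(s), u_{-\alpha}(t)]$ is a direct $\mathrm{SL}_2$ computation yielding $\alpha^\vee(1 - st)$ modulo higher filtration, matching $[X_\alpha, X_{-\alpha}] = H_\alpha$, and torus-torus commutators vanish on both sides.

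For the $\epsilon$-action, I would compute $p$-th powers explicitly. On root subgroups, $u_\alpha(c\varpi^{i_\alpha})^p = u_\alpha(pc\varpi^{i_\alpha})$; writing $p = \lambda \varpi^e u$ with $u \in 1 + \varpi\cO_K$ gives the symbol $\lambda \bar c \cdot X_\alpha \otimes v^{i_\alpha + e}$, which matches $\epsilon(\bar c X_\alpha \otimes v^{i_\alpha})$. On the torus, expanding $(1+c\varpi^{i/h})^p$ shows that the term $p c \varpi^{i/h}$ dominates precisely because $p > he + 1$ forces $e + i/h < p \cdot i/h$ for $i \geq h$ and makes the intermediate binomial coefficients (divisible by $p$) fall into strictly higher filtration; the resulting symbol again matches $\epsilon$. $T(k)$-equivariance is then immediate from the defining formulas. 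I expect the main obstacle to be the careful bookkeeping of the Chevalley commutator formula and the binomial expansions needed to confirm that every higher-order term really does belong to strictly higher filtration under the assumption $he < p - 1$; the rest is a term-by-term comparison of structure constants.
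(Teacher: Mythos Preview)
Your proposal is correct and follows essentially the same route as the paper: define the map on the basis supplied by the Iwahori decomposition, verify Lie-bracket compatibility case by case via the Chevalley commutator relations (non-proportional roots, root--coroot, opposite roots via an $\mathrm{SL}_2$ reduction, and torus--torus), and check the $\epsilon$-operator by computing $p$-th powers of the generators. One small caveat: in the opposite-root case the paper's explicit $\mathrm{SL}_2$ computation gives torus part congruent to $\alpha^\vee(1+st)$ rather than $\alpha^\vee(1-st)$ under its conventions, so you should double-check your sign against the chosen commutator convention; otherwise the argument matches.
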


\begin{rk}\label{remark on p-filtrations}
    As previously recalled, this $p$-valuation on~$I_1$ has also been studied in~\cite[Proposition~3.4]{LahiriSorensen}, 
    which works with split connected reductive $K$-groups, but does not compute
    the associated graded Lie algebra.
    Since we need this piece of information in Section~\ref{cohomology}, we have provided the details.
\end{rk}

The following corollary to Theorem~\ref{graded MP} will be the main tool in Section~\ref{cohomology}.
To state it, recall that whenever~$\fh$ is a Lie algebra over~$k$, and $V$ is a $k$-representation of~$\fh$, we can construct
a semidirect product Lie algebra $\fh \ltimes V$ by regarding~$V$ as an abelian Lie algebra, and the representation $\fh \to \End_k(V)$ as a Lie
algebra homomorphism $\fh \to \operatorname{Der}_k(V)$.
Concretely, 
\[
\text{$[H, v] := H.v$ for all~$H \in \fh, v \in V$.}
\]
Since~$\fn$ acts on~$\fg$ via the adjoint representation, we can therefore form the semidirect product $\fn \ltimes (\fg/\fn)$.
Another way of describing this product is to introduce, for all~$X, Y \in \fg$, the projections~$[X, Y]_{\fb^-}$ and~$[X, Y]_\fn$ of~$[X, Y]$ 
under the triangular decomposition $\fg = \fb^- \oplus \fn$.
Then $\fg/\fn = \fb^-$, with the $\fn$-action given by $X.B = [X, B]_{\fb^-}$ for all~$X \in \fn, B \in \fb^-$.

\begin{corollary}\label{presentation}
Assume that~$e = 1$ and~$h<p-1$.
Then there is an isomorphism of $\bF_p$-Lie algebras
\[
\gr(I_1) \otimes_{\bF_p[\epsilon]} \bF_p \cong \fn \ltimes (\fg/\fn).
\]
\end{corollary}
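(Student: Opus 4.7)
The plan is to combine Theorem~\ref{graded MP} with a direct computation in the graded Lie algebra $\tld\fg_1$. Since $e = 1$ and $h < p-1$ we have $he < p-1$, so Theorem~\ref{graded MP} yields an isomorphism $\gr(I_1) \cong \tld\fg_1$ of graded $k[\epsilon]$-Lie algebras. Moreover, with $e = 1$ the convention $\varpi = p$ forces $\lambda = 1$, so the operator $\epsilon$ is simply multiplication by $v$. Using that $\tld\fg_1$ is a $k$-module, I can therefore rewrite
\[
\gr(I_1) \otimes_{\bF_p[\epsilon]} \bF_p \;\cong\; \tld\fg_1 \otimes_{k[\epsilon]} k \;=\; \tld\fg_1/v\tld\fg_1,
\]
and the corollary reduces to producing an isomorphism of $k$-Lie algebras $\tld\fg_1/v\tld\fg_1 \cong \fn \ltimes (\fg/\fn)$.

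Next I would describe the quotient explicitly. By Corollary~\ref{alternative presentation of tld g}, the map $v : \gr^{j/h}\tld\fg_1 \to \gr^{(j+h)/h}\tld\fg_1$ is the identity after identifying both sides with $\fg[j]$ in the Coxeter grading, hence is bijective for $j \geq 1$. It follows that $\tld\fg_1/v\tld\fg_1$ is concentrated in degrees $1/h, 2/h, \ldots, h/h$, and reading off the definition $\tld\fg = v\fn^-[v] \oplus v\ft[v] \oplus \fn[v]$ yields the $k$-basis
\[
\{X_\alpha \otimes 1 : \alpha \in \Phi^+\} \cup \{X_\alpha \otimes v : \alpha \in \Phi^-\} \cup \{H_\alpha \otimes v : \alpha \in \Delta\}.
\]
Writing $\pi : \tld\fg_1 \to \tld\fg_1/v\tld\fg_1$ for the quotient map, this suggests defining a $k$-linear map $\phi : \fn \oplus \fb^- \to \tld\fg_1/v\tld\fg_1$ by $\phi(X) := \pi(X \otimes 1)$ for $X \in \fn$ and $\phi(B) := \pi(B \otimes v)$ for $B \in \fb^-$. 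The basis description above immediately shows that $\phi$ is a $k$-linear bijection, so one may identify the target with $\fn \oplus \fg/\fn$ as a graded $k$-vector space.

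It remains to verify that $\phi$ is a morphism of Lie algebras, where the source carries the semidirect product structure. Since the bracket on $\tld\fg_1$ satisfies $[X \otimes v^a, Y \otimes v^b] = [X, Y] \otimes v^{a+b}$, I would handle three cases. For $X, Y \in \fn$ the bracket $[X, Y] \otimes 1$ lies in $\fn$ and recovers the given bracket of $\fn$. For $B_1, B_2 \in \fb^-$ the bracket $[B_1, B_2] \otimes v^2$ lies in $v\tld\fg_1$ and so vanishes in the quotient, matching the abelian structure on $\fg/\fn$. For $X \in \fn$ and $B \in \fb^-$ the bracket $[X, B] \otimes v$ splits as $[X, B]_\fn \otimes v + [X, B]_{\fb^-} \otimes v$, where the first summand lies in $v\fn[v] \subset v\tld\fg_1$ and so vanishes in the quotient, leaving $[X, B]_{\fb^-}$, which is precisely the adjoint action defining $\fn \ltimes (\fg/\fn)$. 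I do not expect a substantive obstacle: once Theorem~\ref{graded MP} is in hand, the argument is a direct unwinding of Corollary~\ref{alternative presentation of tld g}, the only delicate point being the bookkeeping in the mixed case, where one must notice that the $\fn$-component of $[X, B]$ picks up an extra factor of $v$ that forces it into $v\tld\fg_1$.
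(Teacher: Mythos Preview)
Your proposal is correct and follows essentially the same approach as the paper's proof. The paper constructs a surjection $\tld\fg_1 \to \fb^- \rtimes \fn$ with kernel $v\tld\fg_1$, while you construct the inverse isomorphism $\fn \oplus \fb^- \to \tld\fg_1/v\tld\fg_1$; the verification in both cases reduces to the same three bracket computations, and the key observation in the mixed case---that $[X,B]_{\fn}\otimes v$ lies in $v\tld\fg_1$---is identical.
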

\begin{proof}
By Theorem~\ref{graded MP} we have
\[
\gr(I_1) \otimes_{\bF_p[\epsilon]} \bF_p \cong \tld \fg_1/v\tld \fg_1. 
\]
If~$X \in \fg[v]$ we write~$\lbar X$ for its image under mod~$v$ reduction $\fg[v] \to \fg$.
Similarly, if $Y = vB \in v \fb^-[v]$ then we write~$Y^* := \lbar B$.
Then there is an $\bF_p$-linear homomorphism 
\[
i: \tld \fg_1 = v \fb^-[v] \oplus \fn[v] \to \fb^- \rtimes \fn, (Y, X) \mapsto (Y^*, \lbar X)
\]
whose kernel is precisely $v \tld \fg_1$.
So it suffices to check that~$i$ is a Lie algebra homomorphism.

Since $[v B_1, v B_2] =v^2[B_1, B_2]$ for all~$B_1, B_2 \in \fb^-[v]$, 
we have $[vB_1, vB_2]^* = 0$, hence 
$i$ restrict to a Lie algebra homomorphism on $v \fb^-[v]$.
By definition, it also restricts to a homomorphism on $\fn[v]$.
Now, if $X \in \fn[v]$ and~$Y =vB \in v\fb^-[v]$ then 
\[
[Y, X] = v[B, X]_{\fb^-[v]} + v[B, X]_{\fn[v]},
\]
and so $i[Y, X] = \lbar{[B, X]}_{\fb^-[v]} = [\lbar B, \lbar X]_{\fb^-} = [i(Y), i(X)]_{\fb^-}$, as desired.
\end{proof}

\subsection{Proof of Theorem~\ref{graded MP}.}
As already observed, the fact that~$\omega$ is a saturated $p$-valuation is~\cite[Proposition~3.4]{LahiriSorensen}.
We begin our analysis of~$\gr(I_1)$ by computing an $\bF_p$-basis.
Bearing in mind~\eqref{values of omega}, we make the following definitions:
\begin{gather*}
\text{ if } \alpha \in \Phi^+, i \in \bZ_{\geq 0}, \text{ then } v^iX_\alpha := \symb(u_\alpha(\varpi^i)) \in \gr(I_1)_{\height(\alpha)/he + i/e}\\
\text{ if } \alpha \in \Phi^-, i \in \bZ_{\geq 1}, \text{ then } v^iX_\alpha := \symb(u_\alpha(\varpi^{i})) \in \gr(I_1)_{\height(\alpha)/he+i/e}\\
\text{ if } \alpha \in \Phi, i \in \bZ_{\geq 1}, \text{ then } v^iH_\alpha := \symb(\alpha^\vee(1+\varpi^{i})) \in \gr(I_1)_{i/e}.
\end{gather*}
Recall from Lemma~\ref{alternative presentation of tld g_e} that for all~$i \geq 1, \alpha \in \Phi$ 
such that $\height(\alpha) \equiv i \text{ mod } h$,
we write 
\[
i_\alpha := (i-\height(\alpha))/h \in \bZ_{\geq 0}.
\]
\begin{lemma}\label{basis of associated graded}
    Let~$i \in \bZ_{\geq 1}$. 
    The $\bF_p$-vector space $\gr(I_1)_{i/he}$ is naturally a $k$-vector space, with basis
    \begin{gather*}
    \{v^{i_\alpha}X_\alpha: \height(\alpha) \equiv i \text{ mod } h\} \text{ if } h \nmid i\\
    \{v^{i/h}H_\alpha: \alpha \in \Delta\} \text{ if } h \mid i.
    \end{gather*}
\end{lemma}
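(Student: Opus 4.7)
The plan is to compute $\gr(I_1)_{i/he} = G_{x, i/h}/G_{x, (i+1)/h}$ directly using the Iwahori-type product decomposition supplied by Lemma~\ref{MP decomposition}, and to exhibit the quotient as a direct sum of $k$-lines, one for each claimed basis element.

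First, I would fix an ordering of $\Phi$ and apply Lemma~\ref{MP decomposition} at both levels $i/h$ and $(i+1)/h$, writing each of $G_{x, i/h}$ and $G_{x, (i+1)/h}$ as a product of root subgroups $U_\alpha(\varpi^{\ceiling*{(i-\height(\alpha))/h}}\cO_K)$ and a torus factor $T(K)_{\ceiling*{i/h}}$. An elementary ceiling calculation shows that the exponent $\ceiling*{(i-\height(\alpha))/h}$ strictly increases when $i \mapsto i+1$ precisely when $\height(\alpha) \equiv i \pmod{h}$, in which case it jumps from $i_\alpha$ to $i_\alpha + 1$; similarly $\ceiling*{i/h}$ strictly increases precisely when $h \mid i$, in which case it goes from $i/h$ to $i/h + 1$.

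Second, since $\omega$ is a $p$-valuation, commutators in $G_{x, i/h}$ land in $G_{x, 2i/h} \subset G_{x, (i+1)/h}$, so the quotient $\gr(I_1)_{i/he}$ is abelian. Combining the two product decompositions and using the abelianness, I would obtain a surjective group homomorphism
\[
\bigoplus_{\substack{\alpha \in \Phi \\ \height(\alpha) \equiv i \,(\mathrm{mod}\, h)}} \frac{U_\alpha(\varpi^{i_\alpha}\cO_K)}{U_\alpha(\varpi^{i_\alpha + 1}\cO_K)} \;\oplus\; \delta_{h \mid i} \cdot \frac{T(K)_{i/h}}{T(K)_{i/h + 1}} \;\longrightarrow\; \gr(I_1)_{i/he}
\]
which is an isomorphism by comparing the cardinalities of the bijections furnished by Lemma~\ref{MP decomposition} at levels $i/h$ and $(i+1)/h$.

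Third, I would identify each nontrivial summand with a $k$-line. The isomorphism $u_\alpha : \Ga \cong U_\alpha$ identifies $U_\alpha(\varpi^{i_\alpha}\cO_K)/U_\alpha(\varpi^{i_\alpha + 1}\cO_K)$ with $\varpi^{i_\alpha}\cO_K/\varpi^{i_\alpha + 1}\cO_K \cong k$, with distinguished generator $v^{i_\alpha} X_\alpha = \symb(u_\alpha(\varpi^{i_\alpha}))$. For the torus contribution, using that $G$ is simply connected (so that $X_*(T) = \bigoplus_{\alpha \in \Delta} \bZ \alpha^\vee$) and that $p > 2$, the evaluation map together with the truncated logarithm gives an isomorphism $T(K)_{i/h}/T(K)_{i/h + 1} \cong X_*(T) \otimes_{\bZ} k \cong \ft_k$ with $k$-basis $\{v^{i/h} H_\alpha : \alpha \in \Delta\}$, where $v^{i/h}H_\alpha = \symb(\alpha^\vee(1 + \varpi^{i/h}))$. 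The $k$-structure on the direct sum is the one induced from the residue field of $\cO_K$ via these identifications.

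The main obstacle is the bookkeeping in the second step: verifying that the natural map from the direct sum of factor quotients to $\gr(I_1)_{i/he}$ is well-defined (this uses that the target is abelian, so the chosen ordering of the product in Lemma~\ref{MP decomposition} becomes irrelevant), surjective (immediate from the product decomposition at level $i/h$), and injective (which follows from the cardinality count against the product decomposition at level $(i+1)/h$). Everything else is a direct computation.
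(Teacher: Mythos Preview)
Your proposal is correct and follows essentially the same approach as the paper: both compute $\gr(I_1)_{i/he}=G_{x,i/h}/G_{x,(i+1)/h}$ via the product decomposition of Lemma~\ref{MP decomposition}, identify which root and torus factors actually change when $i\mapsto i+1$ by the same ceiling computation, and then read off the $k$-basis from the one-dimensional quotients. The only cosmetic difference is that the paper invokes \cite[Theorem~13.5.1]{KPbook} to assert directly that the product map on the quotient is an isomorphism of abelian groups, whereas you argue this by noting the quotient is abelian (from the $p$-valuation inequality $\omega([x,y])\geq 2i/he$) and then counting cardinalities against the two product bijections; both justifications are valid and amount to the same thing.
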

\begin{proof}
    By definition, $\gr(I_1)_{i/he} = G_{x, i/h}/G_{x, (i+1)/h}$.
    Since $(i+1)/h \leq 2i/h$, Lemma~\ref{MP decomposition} implies that the product map
    \begin{equation}\label{MP isomorphism}
    \prod_{\alpha \in \Phi^-} U_{x, \alpha, i/h}/U_{x, \alpha, (i+1)/h} \times T(K)_{i/h}/T(K)_{(i+1)/h} \times \prod_{\alpha \in \Phi^+} U_{x, \alpha, i/h}/U_{x, \alpha, (i+1)/h} \to G_{x, i/h}/G_{x, (i+1)/h}
    \end{equation}
    is an isomorphism of abelian groups 
    (compare the proof of~\cite[Theorem~13.5.1]{KPbook}, and note that all elements of~$\Phi$ are non-divisible).

    Assume first that $h \mid i$.
    Then $U_{x, \alpha, i/h} = U_{x, \alpha, (i+1)/h}$ for all~$\alpha \in \Phi$, and so the left-hand side of~\eqref{MP isomorphism} coincides with 
    \[
    T_{i/h}/T_{i/h+1} \cong X_*(T) \otimes_{\bZ} (1+\varpi^{i/h})/(1+\varpi^{i/h+1})
    \]
    and therefore is a $k$-vector space with basis given by the principal symbols of $\alpha^\vee(1+\varpi^{i/h})$ for~$\alpha \in \Delta$.
    
    Assume now that $h \nmid i$.
    Then Lemma~\ref{MP decomposition} shows that the left-hand side of~\eqref{MP isomorphism} coincides with
    \[
    \prod_{{\height(\alpha) \equiv i \text{ mod } h}}U_\alpha(\varpi^{\frac{i-\height(\alpha)}{h}})/U_\alpha(\varpi^{\frac{i-\height(\alpha)}{h}+1}) = 
    \prod_{{\height(\alpha) \equiv i \text{ mod } h}}U_\alpha(\varpi^{i_\alpha})/U_\alpha(\varpi^{i_\alpha+1}).
    \]
    This concludes the proof because $U_\alpha(\varpi^{i_\alpha})/U_\alpha(\varpi^{i_\alpha+1})$ 
    is a one-dimensional $k$-vector space spanned by~$\symb(u_\alpha(\varpi^{i_\alpha}))$.\qedhere
\end{proof}

Lemma~\ref{basis of associated graded} implies that $\gr(I_1)$ can be given the structure of a free $k[v]$-module with basis
\begin{equation}\label{eqn:basis of associated graded}
\{X_\alpha: \alpha \in \Phi^+\} \cup \{vX_{-\alpha}: \alpha \in \Phi^+\} \cup\{vH_\alpha: \alpha \in \Delta\},
\end{equation}
and then Lemma~\ref{alternative presentation of tld g_e} shows that the map
\begin{gather*}
f: \gr(I_1) \to \tld \fg_e\\
X_\alpha \mapsto X_\alpha \otimes 1, \;\;\; vX_{-\alpha} \mapsto X_{-\alpha} \otimes v, \;\;\; vH_\alpha \mapsto H_\alpha \otimes v
\end{gather*}
is an isomorphism of $(he)^{-1}\bZ$-graded $k[v]$-modules.
It has the additional property that it intertwines the $T(k)$-action on~$\gr(I_1)$ with the restriction to~$T(k)$ of the $T$-action on $\tld \fg_e$.
However, we do not yet know that $\gr(I_1)$ is a $k[v]$-Lie algebra.
This is implied by the next proposition.

\begin{pp}\label{computation of associated graded}
The map~$f : \gr(I_1) \to \tld \fg_e$ is a $k$-Lie algebra isomorphism.
\end{pp}
\begin{proof}
This is a direct computation by cases.
To begin with, if~$i, j \in \bZ_{\geq 1}$ and~$\alpha, \beta \in \Delta$ then
\[
0 = [v^iH_\alpha, v^j H_\beta] = [f(v^iH_\alpha), f(v^j H_\beta)],
\]
as desired.

Now let~$\alpha \in \Delta, \beta \in \Phi$, choose $i \in \bZ_{\geq 1}, j \in \bZ_{\geq \delta_{\beta<0}}$, and consider $[v^iH_\alpha, v^j X_\beta]$.
We need to prove that it coincides with $\langle \alpha^\vee, \beta \rangle v^{i+j}X_{\beta}$.
By definition, $[v^iH_\alpha, v^j X_\beta]$ is the image in $\gr(I_1)_{\height(\beta)/he + (i+j)/e}$ of the commutator $(\alpha^\vee(1+\varpi^i), u_\beta(\varpi^j))$.
Since~$u_\beta$ is part of a Chevalley system, we find that
\[
    (\alpha^\vee(1+\varpi^i), u_\beta(\varpi^j)) = u_\beta((1+\varpi^i)^{\langle \alpha^\vee, \beta \rangle}\varpi^j-\varpi^j) \equiv 
    u_\beta(\langle \alpha^\vee, \beta \rangle \varpi^{i+j}) \text{ mod } U_\beta(\varpi^{i+j+1}\cO_K).
\]
Since $U_\beta(\varpi^{i+j+1}\cO_K) \subset \Fil^{\height(\beta)/he + (i+j+1)/e}(I_1)$, we conclude that
\[
    [v^iH_\alpha, v^j X_\beta] = \symb u_\beta(\langle \alpha^\vee, \beta \rangle \varpi^{i+j}) = \langle\alpha^\vee, \beta\rangle v^{i+j}X_\beta,
\]
as desired.

Now let $\alpha, \beta \in \Phi$ be non-proportional roots, and choose $i \in \bZ_{\geq \delta_{\alpha<0}}, j \in \bZ_{\geq \delta_{\beta<0}}$.
We need to prove that $[v^i X_\alpha, v^j X_\beta] = N_{\alpha, \beta}v^{i+j}X_{\alpha+\beta}$.
The Chevalley commutator relations say that
\[
(u_\alpha(\varpi^i), u_\beta(\varpi^j)) = \prod_{\substack{p, q \in \bZ_{>0}\\p\alpha+q\beta \in \Phi}}u_{p\alpha+q\beta}(c_{p, q; \alpha, \beta}\varpi^{pi+qj})
\]
for some $c_{p, q; \alpha, \beta} \in \bZ$, such that~$c_{1, 1; \alpha, \beta} = N_{\alpha, \beta}$.
(See~\cite[Formula~(2.9.4)]{KPbook} as well as~\cite[Sections~3.2.3, 3.2.4]{BTII} for these facts.)
Since
\[
\omega \left (u_{p\alpha+q\beta}(c_{p, q; \alpha, \beta}\varpi^{pi+qj}) \right ) \geq \frac{p\height \alpha + q \height \beta}{he} + \frac{pi+qj}{e},
\]
and the right-hand side is minimized at~$(p, q) = (1, 1)$, we see that
\[
(u_\alpha(\varpi^i), u_\beta(\varpi^j)) \equiv u_{\alpha+\beta}(c_{1, 1; \alpha, \beta}\varpi^{i+j}) \text{ mod } \Fil^{\frac{\height \alpha + \height \beta}{he} + \frac{i+j}{e} + \frac{1}{he}}(I_1),
\]
which concludes the proof since $\symb u_{\alpha+\beta}(c_{1, 1; \alpha, \beta}\varpi^{i+j}) = N_{\alpha, \beta}v^{i+j}X_{\alpha+\beta}$.

Finally, choose $\alpha \in \Phi^+, i \in \bZ_{>0}, j \in \bZ_{\geq 1}$.
Since $\height(\alpha)+\height(-\alpha) = 0$ and~$\omega$ is a $p$-valuation, we know in advance that
\[
    (u_\alpha(\varpi^i), u_{-\alpha}(\varpi^j)) \in \Fil^{\frac{i+j}{e}}(I_1) = G_{x, i+j},
\]
and we need to prove that
\[
(u_\alpha(\varpi^i), u_{-\alpha}(\varpi^j)) \equiv \alpha^\vee(1+\varpi^{i+j}) \text{ mod } \Fil^{\frac{i+j}{e}+\frac{1}{he}}(I_1) = G_{x, i+j+1/h}.
\]
Since
\[
T_{i+j}/T_{i+j+1/h} \isom G_{x, i+j}/G_{i+j+1/h},
\]
it therefore suffices to prove that the torus part in the Iwahori decomposition of $(u_\alpha(\varpi^i), u_{-\alpha}(\varpi^j))$, which is an element of $T_{i+j}$,
is congruent to $\alpha^\vee(1+\varpi^{i+j})$ modulo~$T_{i+j+1}$.
Using the homomorphism $\eta_\alpha: \SL_2 \to G$ arising from our fixed Chevalley system, whose image is $\langle U_\alpha(K), U_{-\alpha}(K) \rangle$, 
we can assume without loss of generality that $G = \SL_2$.
Here we see that
\[
(u_\alpha(\varpi^i), u_{-\alpha}(\varpi^j)) = \begin{pmatrix}
    1+\varpi^{i+j}+\varpi^{2(i+j)} & - \varpi^{2i+j}\\
    \varpi^{i+2j} & 1-\varpi^{i+j}
\end{pmatrix}.
\]
The torus part of this matrix is congruent to $\diag(1+\varpi^{i+j}, 1 -\varpi^{i+j})$ modulo~$\varpi^{2(i+j)}$.
Since~$i+j\geq 1$, this concludes the proof.
\end{proof}

\begin{lemma}\label{computation of epsilon}
    Let~$\lambda \in k^\times$ be the image of $p/\varpi^e$.
    Then the operator~$\epsilon : \gr(I_1) \to \gr(I_1)$ is multiplication by~$\lambda v^e$.
\end{lemma}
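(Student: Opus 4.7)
The plan is to compute $\epsilon$ on a generating set of $\gr(I_1)$, using the $k$-basis provided by Lemma~\ref{basis of associated graded}, and then to verify the formula $\epsilon = \lambda v^e$ on each basis element separately. Because the associated graded is a $k[v]$-module with multiplication-by-$v$ sending $v^i X_\alpha \mapsto v^{i+1} X_\alpha$, $v^i X_{-\alpha} \mapsto v^{i+1} X_{-\alpha}$, and $v^j H_\alpha \mapsto v^{j+1} H_\alpha$, it suffices to show $\epsilon(v^i X_{\pm \alpha}) = \lambda v^{i+e} X_{\pm \alpha}$ and $\epsilon(v^j H_\alpha) = \lambda v^{j+e} H_\alpha$. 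Since $\omega$ is a saturated $p$-valuation, we already know $\omega(x^p) = \omega(x) + 1$ for every $x \neq 1$, so the output of $\epsilon$ lives in the expected graded piece, and we only need to identify it as a $k$-multiple of the expected basis vector.

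For the root-group generators, I would use that $u_\alpha$ is an additive homomorphism: $u_\alpha(\varpi^i)^p = u_\alpha(p\varpi^i)$. Writing $p = \lambda \varpi^e + \varpi^{e+1}u$ for some $u \in \cO_K$ and using again additivity, $u_\alpha(p \varpi^i) = u_\alpha(\lambda \varpi^{i+e}) \cdot u_\alpha(\varpi^{i+e+1}u)$. The second factor lies in $\Fil^{\height(\alpha)/he + (i+e+1)/e}(I_1)$, which is strictly deeper than the grading level $\omega(x) + 1 = \height(\alpha)/he + (i+e)/e$; hence its symbol vanishes, and $\symb(x^p) = \symb(u_\alpha(\lambda \varpi^{i+e}))$. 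Reading off the $k$-linear structure of $U_\alpha(\varpi^{i+e})/U_\alpha(\varpi^{i+e+1}) \cong \varpi^{i+e}\cO_K/\varpi^{i+e+1}\cO_K \cong k$ exactly as in the proof of Lemma~\ref{basis of associated graded}, this symbol equals $\lambda v^{i+e} X_\alpha$, as required.

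For the torus generators $\alpha^\vee(1+\varpi^j)$, I would expand $(1+\varpi^j)^p$ binomially. The term of index $1$ is $p \varpi^j$, which equals $\lambda \varpi^{j+e}$ modulo $\varpi^{j+e+1}$. Every other term $\binom{p}{i} \varpi^{ij}$ with $2 \leq i \leq p-1$ has valuation $\geq e + ij \geq e + 2j \geq j + e + 1$ (using $j \geq 1$), and the last term $\varpi^{pj}$ has valuation $pj \geq j + e + 1$ because $p \geq e+2$, which follows from the hypothesis $p > he+1$ (already built into the running assumption of the section). Hence $(1+\varpi^j)^p \equiv 1 + \lambda \varpi^{j+e}$ modulo $1 + \varpi^{j+e+1}\cO_K$, and so $\alpha^\vee((1+\varpi^j)^p) \equiv \alpha^\vee(1+\lambda \varpi^{j+e})$ modulo $T(K)_{j+e+1/h}$. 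Translating through the isomorphism $T(K)_{j+e}/T(K)_{j+e+1} \cong X_*(T) \otimes_\bZ k$, this gives $\symb(x^p) = \lambda v^{j+e} H_\alpha$.

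The only non-formal ingredient is the binomial estimate for the torus case, which is the main place where the hypothesis $p > he+1$ is used; everything else is an identification of symbols in $\gr(I_1)$ under the explicit $k$-vector space structures written down in the proof of Lemma~\ref{basis of associated graded}. Putting the two computations together yields $\epsilon = \lambda v^e$ on each basis vector, and hence as a $k[v]$-linear endomorphism of $\gr(I_1)$.
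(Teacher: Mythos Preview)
Your proof is correct and follows essentially the same approach as the paper: check $\epsilon$ on the basis elements $v^iX_\alpha$ and $v^jH_\alpha$, using additivity of $u_\alpha$ for the root-group case and the binomial expansion for the torus case, with the key estimate $v(p\varpi^j)<v(\varpi^{pj})$ being the place where $he<p-1$ is used. The paper's write-up is terser (it simply records $\symb(\alpha^\vee(1+\varpi^i)^p)=\symb(\alpha^\vee(1+p\varpi^i))$ and notes the inequality), but the content is identical.
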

\begin{proof}
Since~$\epsilon$ is $\bF_p$-linear, it suffices to prove that it coincides with~$\lambda v^e$ on the basis~\eqref{eqn:basis of associated graded}.
Since $\omega$ is a saturated $p$-valuation, we have
\[
\omega(\alpha^\vee(1+\varpi^i)^p) = \varphi(\omega(\alpha^\vee(1+\varpi^i)))
\]
and
\begin{gather*}
\omega(u_\alpha(\varpi^i)^p) = \varphi(\omega (u_\alpha(\varpi^i))).
\end{gather*}
(This can also be checked from~\eqref{values of omega}.)
Hence
\begin{gather*}
\epsilon(v^i H_\alpha) = \symb (\alpha^\vee(1+\varpi^{i})^p) = \symb(\alpha^\vee(1+p\varpi^i)) = \lambda v^{i+e}H_\alpha\\
\epsilon(v^i X_\alpha) = \symb(u_\alpha(\varpi^i)^p) = \lambda \symb(u_\alpha(\varpi^{i+e})) = \lambda v^{i+e}X_\alpha,
\end{gather*}
as desired.
(In the first equality, we have used the fact that $v(p\varpi^i) < v(\varpi^{ip})$, which follows from our assumption that $he<p-1$.)
\end{proof}

Proposition~\ref{computation of associated graded} and Lemma~\ref{computation of epsilon} imply that the map 
$f: \gr(I_1) \to \tld \fg_e$ is a $T(k)$-equivariant isomorphism 
of $(he)^{-1}\bZ_{>0}$-graded
$\bF_p[\epsilon]$-Lie algebras (in fact, it is even $k[v]$-linear).
This concludes the proof of Theorem~\ref{graded MP}.

\section{Iwahori cohomology.}\label{cohomology}
Throughout this section we assume that~$e = 1$ and~$p>h+1$, and use notation from Section~\ref{background} and Section~\ref{graded Lie algebras}.
The group~$G$ is still assumed to be almost-simple and simply connected.
We write
\begin{equation}\label{definition of gbar}
\lbar \fg_k := \gr(I_1) \otimes_{\bF_p[\epsilon]} \bF_p \cong \fn_k \ltimes (\fg_k/\fn_k),
\end{equation}
where the isomorphism holds by Corollary~\ref{presentation}.
This is a $k$-Lie algebra, with an action of~$T$ (i.e.\ a grading by weights). 
The isomorphism~\eqref{definition of gbar} intertwines the action of~$T(k)$ on $\gr(I_1)$ with the restriction of the $T$-action on~$\lbar \fg_k$ to~$T(k)$.
By~\cite[Theorem~1.1]{SorensenHochschild} there is a spectral sequence
\begin{equation}\label{Lazard spectral sequence}
E_1^{s, t}:= H^{s, t}(\lbar \fg_k, k) \Rightarrow H^{s+t}(I_1, k).
\end{equation}
The bigrading on the $E_1$-page is induced by the $h^{-1}\bZ_{>0}$-grading on~$\lbar \fg_k$, see~\cite[Definition~4.5]{SorensenHochschild}, and it has the property that
\[
\bigoplus_{s+t = n}H^{s, t}(\lbar \fg_k, k) = H^{n}(\lbar \fg_k, k),
\]
where~$H^n(\lbar \fg_k, k)$ denotes the $\bF_p$-Lie algebra cohomology of~$\lbar \fg_k$.
It is therefore isomorphic to $H^*(\lbar \fg_k \otimes_{\bF_p} k, k)$, viewed now as $k$-Lie algebra cohomology.

The spectral sequence~\eqref{Lazard spectral sequence} is multiplicative,
and functorial for automorphisms of~$(I_1, \omega)$.
It is therefore equivariant for the action of the finite torus $T(k)$ on its terms.
Since the group $T(k)$ is $p$-coprime, we conclude that there is a multiplicative spectral sequence
\begin{equation}\label{Lazard spectral sequence II}
E_1^{s, t} = H^{s, t}(\lbar \fg_k, k)^{T(k)} \Rightarrow H^{s+t}(I, k).
\end{equation}

\begin{rk}\label{multiplicative convergence}
Let~$H^*$ be a graded-commutative, finite-dimensional graded $k$-algebra.
By definition, $E_r^{ij} \Rightarrow H^*$ is a multiplicative spectral sequence if all pages $(E_r, d_r)$ are $k$-cdga's, and the isomorphism $E_{r+1} \cong H(E_r, d_r)$
preserves the multiplication.
Convergence of the spectral sequence to~$H^*$ means that the induced filtration on $H^*$ is multiplicative (i.e.\ $\Fil^i. \Fil^j \subset \Fil^{i+j}$)
and furthermore $\gr H^* \isom E_\infty$ as graded $k$-algebras.

In the special case in which $E_\infty \cong \Lambda_k(\Phi)^{\otimes f}$ (where we give~$E_\infty$ the total grading), this implies that $H^* \cong \Lambda_k(\Phi)^{\otimes f}$.
To see this, choose lifts $[x_{2m_i+1}] \in H^{2m_i+1}$ of the generators of~$\Lambda_k(\Phi)^{\otimes f}$.
Since $H^*$ is graded-commutative, and~$\Lambda_k(\Phi)^{\otimes f}$ is free graded-commutative, 
these lifts produce a morphism $\alpha: \Lambda_k(\Phi)^{\otimes f} \to H^*$ of graded $k$-algebras.
To prove that~$\alpha$ is an isomorphism, it suffices to prove that it is surjective.
Let~$L$ be the image of~$\alpha$, and endow it with the $k$-subspace filtration.
Then $\gr L \to \gr H^*$ is surjective by construction.
Hence $L = H^*$, as desired.
\end{rk}

We are going to prove that~\eqref{Lazard spectral sequence II} degenerates at~$E_1$, and to compute the algebra structure on $H^*(I, k)$.
We begin by making a reduction to the case~$f = 1$, and so we write $\lbar \fg := \fn \ltimes (\fg/\fn)$ for the $\bF_p$-form
of~$\lbar \fg$ arising from the split $\bQ_p$-form of~$G$.
Since~$\lbar \fg_k$ is a $k$-Lie algebra, there is a decomposition
\[
\lbar \fg_k \otimes_{\bF_p} k \cong \prod_{i=0}^{f-1} \lbar \fg_k^{(i)}
\]
as product of Lie ideals, where $\lbar \fg_k^{(i)} := \lbar \fg_k \otimes_{k, \Frob_p^i} k$.
The action of~$T$ on~$\lbar \fg_k^{(i)}$ is also twisted by~$\Frob_p^i$, in the sense that all degrees on~$\lbar \fg_k$ are multiplied by~$p^i$.
The K\"unneth formula for Lie algebra cohomology then gives a $T$-equivariant isomorphism of graded $k$-algebras
\begin{equation}\label{Kunneth formula for gbar}
H^*(\lbar \fg_k, k) \cong \bigotimes_{i=0}^{f-1}H^*(\lbar \fg, \bF_p)^{(i)} \otimes_{\bF_p} k.
\end{equation}
We now determine the factors of~\eqref{Kunneth formula for gbar}.

\begin{lemma}\label{Hochschild--Serre for semidirect products}
The Hochschild--Serre spectral sequence
\[
E_2^{i, j} = H^i(\fn, H^j(\fg/\fn, \bF_p)) \cong H^i(\fn, \wedge^j (\fg/\fn)^\vee) \Rightarrow H^{i+j}(\lbar \fg, \bF_p)
\]
degenerates at~$E_2$.
\end{lemma}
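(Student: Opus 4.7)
The plan is to show that the Chevalley--Eilenberg complex of $\lbar \fg$ carries a natural bigrading strictly preserved by the differential, which then forces the Hochschild--Serre spectral sequence to collapse. By Corollary~\ref{presentation}, $\lbar \fg = \fn \ltimes (\fg/\fn)$ as a semidirect product with $\fg/\fn$ abelian, so the vector space splitting $\lbar \fg = \fn \oplus (\fg/\fn)$ gives a dual splitting $\lbar \fg^\vee = \fn^\vee \oplus (\fg/\fn)^\vee$ and hence a bigrading
\[
C^{p,q} := \wedge^p \fn^\vee \otimes \wedge^q (\fg/\fn)^\vee
\]
on $C^*(\lbar \fg, \bF_p) = \wedge^* \lbar \fg^\vee$. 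The Hochschild--Serre filtration attached to the ideal $\fg/\fn \triangleleft \lbar \fg$ corresponds exactly to $F^p C^n = \bigoplus_{i \geq p} C^{i, n-i}$.

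The crux is to verify that the Chevalley--Eilenberg differential $d$ has pure bidegree $(+1, 0)$, i.e.\ $d(C^{p, q}) \subseteq C^{p+1, q}$. By the Leibniz rule, it suffices to check this on degree-one generators. For $\phi \in \fn^\vee$, the formula $d\phi(x, y) = -\phi([x, y]_{\lbar \fg})$ combined with $[\fn, \fn] \subseteq \fn$, $[\fn, \fg/\fn] \subseteq \fg/\fn$ (the semidirect product structure), and $[\fg/\fn, \fg/\fn] = 0$ (abelianness) forces $d\phi$ to vanish unless both arguments lie in $\fn$, giving $d\phi \in \wedge^2 \fn^\vee = C^{2, 0}$. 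An analogous analysis for $\psi \in (\fg/\fn)^\vee$, which vanishes on $\fn$, shows that only mixed brackets contribute, yielding $d\psi \in \fn^\vee \wedge (\fg/\fn)^\vee = C^{1, 1}$. Both generators shift the bidegree by $(+1, 0)$, and the Leibniz rule transports this to all of $C^{*, *}$.

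Once this bidegree is established, $C^*(\lbar \fg, \bF_p)$ decomposes as a direct sum $\bigoplus_q (C^{*, q}, d|_{C^{*, q}})$ of subcomplexes indexed by $q$, each of which is readily identified with the standard Chevalley--Eilenberg complex computing $H^*(\fn, \wedge^q (\fg/\fn)^\vee)$. A class in $E_2^{p, q}$ is therefore represented by an honest $d$-cocycle $x \in C^{p, q}$, for which $d_r[x] = [dx] = 0$ automatically for all $r \geq 2$, and the spectral sequence degenerates at $E_2$. The only real work is the bookkeeping in the second step; once the bidegree of $d$ is checked, degeneration follows formally from the rigid structure of the semidirect product with abelian normal subalgebra.
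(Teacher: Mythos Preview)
Your proof is correct. The paper's own proof simply cites the general fact that the Hochschild--Serre spectral sequence for Lie algebra cohomology with trivial coefficients always degenerates at~$E_2$ for semidirect products with abelian kernel (referring to \cite[Theorem~1.2]{DPLie}); your argument is exactly the standard self-contained verification of that fact, so the two approaches coincide in substance.
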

\begin{proof}
    The Hochschild--Serre spectral sequence for Lie algebra cohomology with trivial coefficients always degenerates at~$E_2$ 
for semidirect products with abelian kernels.
See for instance~\cite[Theorem~1.2]{DPLie}.
\end{proof}

Together with Lemma~\ref{Hochschild--Serre for semidirect products}, our assumption that~$p > h+1$ implies that~\eqref{Lazard spectral sequence II} can be simplified as follows.

\begin{lemma}\label{weights of exterior algebra}
Let~$n>h(1+p+\cdots+p^{f-1})$ be an integer.
Choose~$\lambda_0, \ldots, \lambda_{f-1} \in X^*(T)$, and let $\lambda := \sum_{i=0}^{f-1}\lambda_ip^i$.
Assume that~$\lambda_i \in \wedge \fg$ for all~$i$, and $\lambda \in n X^*(T)$.
Then~$\lambda = 0$.
\end{lemma}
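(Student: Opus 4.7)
The plan is to argue by contradiction: suppose $\nu\neq 0$ in $X^*(T)$ with $n\nu=\sum_{i=0}^{f-1}p^i\lambda_i$, and derive a contradiction by pairing both sides with a well-chosen cocharacter.

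First I would choose a simple coroot $\alpha^\vee$ such that $\langle\nu,\alpha^\vee\rangle\neq 0$; this is possible because $\nu\ne 0$ and the simple coroots are a basis of $X_*(T)$ (up to, and coprime to, $[P:Q]$ if $G$ is not simply connected). By integrality of the pairing $X^*(T)\times X_*(T)\to\bZ$, one has $|\langle\nu,\alpha^\vee\rangle|\geq 1$. Pairing the equation $n\nu=\sum p^i\lambda_i$ with $\alpha^\vee$ yields
\[
n\cdot\langle\nu,\alpha^\vee\rangle=\sum_{i=0}^{f-1}p^i\langle\lambda_i,\alpha^\vee\rangle.
\]

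The key combinatorial input is the estimate $|\langle\lambda_i,\alpha^\vee\rangle|\leq h$ for each $i$. Since $\lambda_i$ is (by hypothesis) a weight of the exterior algebra $\wedge\fg$, it can be written as a sum $\lambda_i=\sum_{\beta\in S_i}\beta$ for some subset $S_i\subset\Phi$ (the Cartan contributions are trivial); hence
\[
|\langle\lambda_i,\alpha^\vee\rangle|\leq \sum_{\beta\in\Phi^+}|\langle\beta,\alpha^\vee\rangle|.
\]
I expect this quantity to be bounded by $h$ after a careful choice of $\alpha^\vee$ adapted to $\nu$ (up to $W$-equivariance, using that the $W$-action permutes the set of weights of $\wedge\fg$, so we may normalize $\nu$ to be dominant), reducing the problem to a direct case analysis on each irreducible component of $\Phi$.

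Given the key estimate, we conclude
\[
n \leq n|\langle\nu,\alpha^\vee\rangle|\leq h(1+p+\cdots+p^{f-1}) < n,
\]
a contradiction. Hence $\nu=0$, so $\lambda=0$.

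The main obstacle will be verifying the combinatorial bound $|\langle\lambda_i,\alpha^\vee\rangle|\leq h$, as a priori the na\"ive sum $\sum_{\beta\in\Phi^+}|\langle\beta,\alpha^\vee\rangle|$ can exceed $h$ in some types (e.g.\ via certain simple coroots in $D_4$). If such a clean uniform bound does not hold for every simple coroot, one would exploit the freedom in choosing $\alpha^\vee$ depending on $\nu$, and/or supplement the size estimate with the observation that $\lambda\in Q$ (since each $\lambda_i$ lies in the root lattice, being a sum of roots), pairing this with control of $[P:Q]$ (which is coprime to $n$ when $p>h+1$) to force the required vanishing.
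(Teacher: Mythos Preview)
Your overall strategy---pair with a suitable coroot after using the $W$-action to normalize---is exactly what the paper does, but the key numerical bound you hope for is too optimistic, and this is not a minor detail: it is where the actual content of the argument lies.

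Concretely, your claim $|\langle\lambda_i,\alpha^\vee\rangle|\le h$ for a simple coroot fails already in type~$A_2$: taking $\lambda_i=\alpha_1+(\alpha_1+\alpha_2)+(-\alpha_2)$ gives $\langle\lambda_i,\alpha_1^\vee\rangle=4>3=h$, and in general the best uniform bound is $2(h-1)$, attained by $\lambda_i=2\rho$. The paper obtains this bound cleanly by making $\lambda$ dominant, writing $\lambda=n\mu$, observing $\lambda_i\le 2\rho$ (since a sum of distinct roots is at most the sum of all positive roots), and pairing with the \emph{highest} coroot $\alpha_0^\vee$, for which $\langle 2\rho,\alpha_0^\vee\rangle=2(h-1)$. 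This yields only $\langle\mu,\alpha_0^\vee\rangle\in\{0,1\}$, not $0$ directly.

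The missing idea is how to eliminate $\langle\mu,\alpha_0^\vee\rangle=1$. The paper notes that this forces $\mu$ to be a minuscule fundamental weight (in particular $\mu\notin\bZ\Phi$), and then verifies type-by-type that $h\mu\not<2\rho$ coefficient-wise in the simple-root basis, contradicting $n\mu\le 2(1+p+\cdots+p^{f-1})\rho$ together with $n>h(1+p+\cdots+p^{f-1})$. Your proposed fallback via $\lambda\in\bZ\Phi$ and coprimality of $n$ with $[P:Q]$ is in the right spirit (the obstruction is precisely that $\mu\notin\bZ\Phi$), but it does not close the gap as stated: the lemma allows arbitrary $n>h(1+p+\cdots+p^{f-1})$, so you cannot assume $(n,[P:Q])=1$, and even when you can, $n\mu\in\bZ\Phi$ with $(n,[P:Q])=1$ only forces $\mu\in\bZ\Phi$, which still has to be played off against the inequality $\lambda\le 2(1+\cdots)\rho$. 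Some finer case analysis of the kind the paper performs seems unavoidable.
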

\begin{proof}
We generalize the argument in~\cite[Section~2.2(1)]{AJinduced}, which is the case of this lemma when~$f = 1$ and~$n$ is prime.
Since the weights of $\wedge \fg$ form a $W$-stable subset of $X^*(T)$, and every $W$-orbit contains a dominant weight, we can assume without loss of generality that~$\lambda$
is dominant.
Hence $\lambda = n\mu$ for some dominant~$\mu$.
Since $\lambda_i$ is a weight of $\wedge \fg$, it is a sum of pairwise distinct roots, and so $\lambda_i \leq 2 \rho$.
This implies $\lambda \leq 2(1+p+\cdots+p^{f-1})\rho$.
If~$\alpha_0^\vee$ is the highest coroot, we conclude that 
\[
0 \leq n\langle\mu, \alpha_0^\vee \rangle \leq 2(1+p+\cdots+p^{f-1}) \langle\rho, \alpha_0^\vee\rangle = 2(1+p+\cdots+p^{f-1})(h-1)<2n.
\]
Hence $\langle\mu, \alpha_0^\vee\rangle \in \{0, 1\}$.
If~$\langle\mu, \alpha_0^\vee\rangle = 0$, then~$\mu = 0$, and we conclude that~$\lambda = 0$, as desired.

There remains to prove that $\langle\mu, \alpha_0^\vee\rangle = 1$ leads to a contradiction.
As explained in the proof of~\cite[Section~2.2(1)]{AJinduced}, this equality implies that~$\mu$ is a fundamental weight not contained in the root lattice~$\langle \bZ \cdot \Phi \rangle$.
So it suffices to check that if~$\mu$ is such a weight, then 
\[
n\mu \not \leq 2(1+p+\cdots+p^{f-1})\rho.
\]
Suppose otherwise, and choose a basis of simple roots~$\{\alpha_1, \ldots, \alpha_\ell\}$ of~$X^*(T)_\bQ$.
Since $n > h(1+p+\cdots+p^{f-1})$ and $2\rho$ has positive coefficients in this basis, 
it follows that every coefficient of~$h\mu$ is strictly smaller than the corresponding coefficient of~$2 \rho$.
This can be checked to be false on a case-by-case basis, from~\cite[Planches I--IX]{BourbakiLieIV-VI}, as we now illustrate.

For type~$A_\ell$, we have~$h = \ell+1$, the coefficient of~$\alpha_i$ in the $i$-th fundamental weight~$\varpi_i$ is $i(h-i)/h$, and the coefficient of~$\alpha_i$ in~$2\rho$ is $i(h-i)$.
For type~$B_\ell$, we have~$h = 2\ell$ and the only fundamental weight not in~$\langle \bZ \cdot \Phi \rangle$ is~$\varpi_\ell$.
The coefficient of~$\alpha_\ell$ in~$\varpi_\ell$ is $\ell/2$, and its coefficient in~$2\rho$ is~$\ell^2$.
For type~$C_\ell$, we have $h = 2\ell$, and~$\alpha_1$ has coefficient~$1$ in~$\varpi_i$ and~$2\ell$ in~$2\rho$.
For type~$D_\ell$, we have $h = 2\ell-2$, and $\alpha_1$ has coefficient~$1$ in~$\varpi_i$ for~$1 \leq i \leq \ell-2$, and $2\ell-2$ in~$2\rho$.
Finally, the coefficient of $\alpha_{\ell-1}$ and~$\alpha_\ell$ in~$2\rho$ is $\ell(\ell-1)/2$, which coincides with the coefficient of $\alpha_{\ell-1}$ in $h\varpi_{\ell-1}$, resp.
the coefficient of $\alpha_\ell$ in~$h\varpi_\ell$.
For type~$E_6$, $h = 12$, and a consideration of the coefficient of~$\alpha_1$ suffices for~$\varpi_1, \varpi_3, \varpi_5$, whereas~$\alpha_6$ suffices for~$\varpi_6$.
For type~$E_7$, $h = 18$, and a consideration of the coefficient of~$\alpha_1$ suffices for~$\varpi_2, \varpi_5$, whereas~$\alpha_7$ suffices for~$\varpi_7$.
Finally, types~$E_8, F_4$ and~$G_2$ have connectivity index~$1$, and so there is nothing to prove for these types.
This concludes the proof of the lemma.
\end{proof}

\begin{lemma}\label{weights of gbar-cohomology}
Assume that $p > h+1$.
Then 
\[
H^*(\lbar \fg_k, k)^{T(k)} = H^*(\lbar \fg_k, k)^{T} = \bigotimes_{i=0}^{f-1}H^*(\lbar \fg, \bF_p)^T \otimes_{\bF_p} k.
\]
\end{lemma}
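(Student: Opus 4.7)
The plan is to exploit the K\"unneth decomposition~\eqref{Kunneth formula for gbar} and apply Lemma~\ref{weights of exterior algebra} twice. First I would note that, as $T$-modules, $\lbar\fg_k = \fn_k \oplus (\fg_k/\fn_k)$ is isomorphic to $\fg_k$, so the $T$-weights appearing in the Chevalley--Eilenberg complex for $\lbar\fg_k$, and hence in $H^*(\lbar\fg, \bF_p)$, form a subset of the weights of $\wedge \fg$. Combined with~\eqref{Kunneth formula for gbar} and the Frobenius twist on the $i$-th tensor factor, the $T$-weight $\mu$ subspace of $H^*(\lbar\fg_k, k)$ decomposes as a direct sum, indexed by tuples $(\lambda_0, \ldots, \lambda_{f-1})$ of weights of $\wedge\fg$ satisfying $\sum_{i=0}^{f-1} p^i \lambda_i = \mu$, of tensor products of the corresponding weight subspaces in each factor.

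For the first equality, $T$-invariants embed into $T(k)$-invariants automatically. Conversely, the identification $T(k) \cong X_*(T) \otimes_{\bZ} k^{\times}$ shows that a character $\lambda \in X^*(T)$ restricts trivially to $T(k)$ precisely when $\lambda \in (p^f-1)X^*(T)$. Hence every weight $\mu$ contributing to the $T(k)$-invariants satisfies $\sum_i p^i \lambda_i \in (p^f-1)X^*(T)$. The hypothesis $p > h+1$ gives $p^f - 1 = (p-1)(1+p+\cdots+p^{f-1}) > h(1+p+\cdots+p^{f-1})$, so Lemma~\ref{weights of exterior algebra} with $n = p^f-1$ forces $\sum_i p^i \lambda_i = 0$, yielding the reverse inclusion.

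For the second equality, I would show that the relation $\sum_i p^i \lambda_i = 0$ between weights $\lambda_i$ of $\wedge\fg$ forces each $\lambda_i = 0$. Reducing modulo $p$ gives $\lambda_0 \in pX^*(T)$, and then Lemma~\ref{weights of exterior algebra} applied with $f=1$ and $n=p$ (which needs $p > h$, satisfied since $p > h+1$) to the single weight $\lambda_0$ yields $\lambda_0 = 0$. Dividing the remaining relation by $p$ in the torsion-free module $X^*(T)$ and iterating produces $\lambda_i = 0$ for all $i$, so the weight-zero subspace equals $\bigotimes_{i=0}^{f-1} H^*(\lbar\fg, \bF_p)^T \otimes_{\bF_p} k$. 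The argument amounts to unwinding the K\"unneth grading and invoking the key lemma in two regimes, and no step should present an additional obstacle beyond what was already handled in the proof of Lemma~\ref{weights of exterior algebra}.
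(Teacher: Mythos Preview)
Your proof is correct and follows the same overall architecture as the paper: use the K\"unneth decomposition~\eqref{Kunneth formula for gbar}, verify that the weights of $H^*(\lbar\fg,\bF_p)$ lie among the weights of $\wedge\fg$, and then invoke Lemma~\ref{weights of exterior algebra} with $n=p^f-1$ for the first equality and iteratively with $f=1$, $n=p$ for the second.

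The one point of genuine difference is how you verify that the weights of $H^*(\lbar\fg,\bF_p)$ are weights of $\wedge\fg$. The paper passes through Lemma~\ref{Hochschild--Serre for semidirect products} and Kostant's theorem~\eqref{twisted Kostant's theorem} to write each such weight as $w\cdot 0+\mu$ with $\mu$ a weight of $\wedge\fb$, and then observes that this lies in $\wedge\fn^-\otimes\wedge\fb\subset\wedge\fg$. Your route is more elementary: since $\lbar\fg\cong\fg$ as $T$-modules, the Chevalley--Eilenberg complex $\wedge\lbar\fg^\vee$ already has all its weights among those of $\wedge\fg^\vee$, and these coincide with the weights of $\wedge\fg$ because the weight set of $\fg$ is stable under negation. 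This is a nice shortcut; the paper's detour through Kostant is not needed here, though the same style of argument does get reused later in Lemma~\ref{weights of exterior algebra in nonsplit case}. You might make the passage from $\wedge\lbar\fg^\vee$ to $\wedge\fg$ explicit, since as written you silently identify the weights of the dual with those of $\fg$ itself.
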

\begin{proof}
By~\eqref{Kunneth formula for gbar}, the first equality is equivalent to
\begin{equation}\label{T(k) and T}
\left ( \bigotimes\nolimits_{i=0}^{f-1}H^*(\lbar \fg, \bF_p)^{(i)} \otimes_{\bF_p} k \right )^{T(k)} = \left ( \bigotimes\nolimits_{i=0}^{f-1}H^*(\lbar \fg, \bF_p)^{(i)} \otimes_{\bF_p} k \right )^{T}.
\end{equation}
For this, it suffices to prove that if~$\lambda_i$ is a weight of $H^*(\lbar \fg, \bF_p)$, and $\lambda := \sum_{i=0}^{f-1}\lambda_ip^i$ is divisible by~$p^f-1$,
then~$\lambda = 0$.
By Lemma~\ref{weights of exterior algebra}, it suffices to prove that $\lambda_i$ is a weight of~$\wedge \fg$ for all~$i$.
By Lemma~\ref{Hochschild--Serre for semidirect products}, $\lambda_i$ is a weight of $H^r(\fn, \wedge^s(\fg/\fn)^\vee)$ for some~$r, s$.
Hence there exists a weight~$\mu_i$ of $\wedge (\fg/\fn)^\vee$ such that $\lambda_i$ is a weight of $H^*(\fn, \mu_i)$.
By Kostant's theorem~\eqref{twisted Kostant's theorem}, we see that $\lambda_i = w_i \cdot 0 +\mu_i$ for some~$w_i \in W$.
Since $w_i \cdot 0$ is a sum of distinct negative roots, it is a weight of $\wedge(\fn^-)$, whereas
by construction, $\mu_i$ is a weight of $\wedge(\fg/\fn)^\vee \cong \wedge(\fb)$.
Hence $\lambda_i$ is a weight of $\wedge \fg$, as desired.
This concludes the proof of~\eqref{T(k) and T}.

For the second equality, there remains to prove that if~$\lambda = 0$, then~$\lambda_i = 0$ for all~$i$.
However, $\lambda = 0$ implies that~$\lambda_0 \in p X^*(T)$.
Since~$p>h$, Lemma~\ref{weights of exterior algebra} with~$f = 1, n = p$ implies that~$\lambda_0 = 0$.
Iterating this argument implies that~$\lambda_i = 0$ for all~$i$, as desired.
\end{proof}

By the first equality in Lemma~\ref{weights of gbar-cohomology}, the spectral sequence~\eqref{Lazard spectral sequence II} can be rewritten as
\begin{equation}\label{Lazard spectral sequence III}
E_1^{s, t} = H^{s, t}(\lbar \fg_k, k)^{T} \Rightarrow H^{s+t}(I, k).
\end{equation}
The following theorem is the main result of this section.

\begin{thm}\label{main theorem}
If~$p > h+1$
then~\eqref{Lazard spectral sequence III} degenerates at~$E_1$, and $H^*(I, k)$ is isomorphic to
$\Lambda_k(\Phi)^{\otimes f}$ as graded $k$-algebras.
\end{thm}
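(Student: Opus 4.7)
The plan is to establish Theorem~\ref{main theorem} by first computing the $E_1$-page of~\eqref{Lazard spectral sequence III} as a graded $k$-algebra, then deducing degeneration and the algebra isomorphism from a dimension comparison with the characteristic-zero Lazard isomorphism. By Lemma~\ref{weights of gbar-cohomology} and~\eqref{Kunneth formula for gbar}, this reduces to showing $H^*(\lbar \fg, \bF_p)^T \cong \Lambda_{\bF_p}(\Phi)$ as a graded $\bF_p$-algebra.

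For this computation, I would use the Hochschild--Serre spectral sequence from Lemma~\ref{Hochschild--Serre for semidirect products}, which degenerates at $E_2$, to identify $H^*(\lbar \fg, \bF_p)$ with $\bigoplus_{i, j} H^i(\fn, \wedge^j(\fg/\fn)^\vee)$. Passing to $T$-weight zero, I would then identify this with the coherent cohomology of the sheaf $\cL(\wedge(\fg/\fn)^\vee)$ on $X := G/B$, using the Killing-form $B$-equivariant isomorphism $(\fg/\fn)^\vee \cong \fb$ (nondegenerate for $p > h$), the short exact sequence $0 \to \fn \to \fb \to \ft \to 0$ of $B$-modules, and the ingredients of Lemma~\ref{reduction to Hodge cohomology}. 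Combining this with the cycle class isomorphism~\eqref{cycle class isomorphism} and the Borel presentation of Lemma~\ref{Borel presentation}, the resulting graded algebra identifies, after doubling the Chow-ring grading, with the derived tensor product $\lbar A(G/B) \otimes^L_{\Sym(\ft^\vee[-2])} \bF_p$; this can also be presented through the Koszul base change~\eqref{Koszul base change}. Invoking the quasi-isomorphism~\eqref{dg augmentation} of $R$-cdga's, I would replace $\lbar A(G/B)$ by its $K$-flat Koszul model $\Sym(\ft^\vee[-2]) \otimes_k \wedge(I/\fm I[1])$ from Section~\ref{dg algebras}, and then base change to $\bF_p$: the Koszul differential vanishes mod $\fm$, leaving the exterior algebra $\wedge(I/\fm I[1])$ on generators in cohomological degrees $2m_i+1$, which is $\Lambda_{\bF_p}(\Phi)$ by construction.

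To conclude, I would compare with characteristic zero: the Lazard isomorphism~\eqref{characteristic zero Lazard isomorphism}, the Galois decomposition~\eqref{Q_p-Lie to K-Lie}, and~\eqref{cohomology of semisimple Lie algebra} together give $H^*(I, K) \cong \Lambda_K(\Phi)^{\otimes f}$ of total $K$-dimension $2^{rf}$. The universal coefficient sequences~\eqref{universal coefficients I}--\eqref{universal coefficients II} then yield $\dim_k H^*(I, k) \geq 2^{rf}$, while $H^*(I, k)$ is the abutment of~\eqref{Lazard spectral sequence III} whose $E_1$-page has total dimension $2^{rf}$ by the preceding computation. Hence every inequality is an equality: the spectral sequence degenerates at $E_1$, $H^*(I, \cO_K)$ is $\cO_K$-torsion-free, and $E_\infty \cong \Lambda_k(\Phi)^{\otimes f}$ as graded $k$-algebras. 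Remark~\ref{multiplicative convergence} then upgrades this to an isomorphism $H^*(I, k) \cong \Lambda_k(\Phi)^{\otimes f}$ of graded $k$-algebras.

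The main obstacle is the graded-algebra identification in the middle paragraph: promoting the weight-zero $\fn$-cohomology of $\wedge(\fg/\fn)^\vee$ to a multiplicative quasi-isomorphism with the Koszul/Borel model of $\lbar A(G/B)$. The non-split $B$-module extension $0 \to \fn \to \fb \to \ft \to 0$ must be carefully threaded through the cup-product structures arising from Hochschild--Serre, coherent cohomology on $G/B$, and the cdga formalism of Section~\ref{dg algebras}, since the Hochschild--Serre degeneration a priori only gives a vector-space splitting.
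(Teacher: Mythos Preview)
Your proposal is correct and matches the paper's approach: the theorem itself is deduced from Proposition~\ref{upper bound} (your middle paragraph) together with the characteristic-zero lower bound of Lemma~\ref{lower bound}(3) (your last paragraph), finished off with Remark~\ref{multiplicative convergence}. The paper proves Proposition~\ref{upper bound} exactly along the lines you sketch---via the filtration spectral sequence~\eqref{Koszul spectral sequence} coming from~\eqref{universal extension} (dual to your Killing-form sequence) and the Koszul model~\eqref{dg augmentation}---and the step you flag as the main obstacle, passing from $H^*(\fn,-)^T$ to $\lbar A(G/B)$ multiplicatively, is handled by the intermediate comparisons with $U_1$- and $B$-cohomology in Lemmas~\ref{torus invariants} and~\ref{B-cohomology}.
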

\begin{proof}
By Lemma~\ref{lower bound}~(3), $\dim_k H^*(I, k)$ is bounded below by the dimension of $\Lambda_k(\Phi)^{\otimes f}$.
By Proposition~\ref{upper bound}, if~$p > h$ then $H^*(\lbar \fg, \bF_p)^T \cong \Lambda_{\bF_p}(\Phi)$.
By the second equality in Lemma~\ref{weights of gbar-cohomology}, we deduce that $E_1 \cong \Lambda_k(\Phi)^{\otimes f}$. 
For dimension reasons, this implies that~\eqref{Lazard spectral sequence III} degenerates at~$E_1$. 
Since $H^*(I, k)$ is a graded-commutative algebra, we conclude by Remark~\ref{multiplicative convergence} that $H^*(I, k) \cong \Lambda_k(\Phi)^{\otimes f}$.
\end{proof}

The rest of this section aims to construct the isomorphism $H^*(\lbar \fg, \bF_p)^T \cong \Lambda_{\bF_p}(\Phi)$ needed in the proof of Theorem~\ref{main theorem}.
We already know from Lemma~\ref{Hochschild--Serre for semidirect products} that $H^*(\lbar \fg, \bF_p)^T \cong H^*(\fn, \wedge^*(\fg/\fn)^\vee)^T$ as graded $\bF_p$-algebras,
where the right-hand side has the total grading.
We begin by proving that
$H^*(\fn, \wedge^*(\fg/\fb)^\vee)^T$
is isomorphic as bigraded $\bF_p$-algebras 
to the Hodge cohomology of $G/B$, and so to $\lbar A(G/B)$, i.e.\ the Chow ring of~$G/B$, tensored with~$\bF_p$.

\begin{lemma}\label{correct dimension Hodge cohomology}
We have 
\[
\dim_{\bF_p} H^i(\fn, \wedge^j (\fg/\fb)^\vee)^T = \dim_{\bF_p} H^i(G/B, \Omega^j).
\]
\end{lemma}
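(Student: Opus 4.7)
The plan is to show that $\dim H^i(\fn, V)^T$ vanishes for $i \ne j$ and equals $|\{w \in W : \ell(w) = i\}|$ when $i = j$, matching $\dim H^i(G/B, \Omega^j)$ via \eqref{dimension of Hodge cohomology} and Bott vanishing; throughout, I abbreviate $V := \wedge^j(\fg/\fb)^\vee$. The argument combines Kostant's theorem \eqref{twisted Kostant's theorem} as an upper bound with an Euler-characteristic calculation based on the Weyl denominator identity.

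For the upper bound, the $\fn$-action on $V$ shifts $T$-weights strictly upward, so the weight decomposition $V = \bigoplus_\mu V_\mu$ refines to an $\fn$-stable filtration with associated graded $\bigoplus_\mu V_\mu$ on which $\fn$ acts trivially, the $\mu$ ranging over sums of $j$ distinct positive roots (since $(\fg/\fb)^\vee \cong \fn$ as $T$-modules). The resulting $T$-equivariant spectral sequence
\[
E_1 = \bigoplus_\mu V_\mu \otimes H^*(\fn, k_\mu) \Longrightarrow H^*(\fn, V),
\]
combined with \eqref{twisted Kostant's theorem}, has $T$-invariants contributed by the pairs $(w, \mu)$ with $\ell(w) = i$ and $\mu = -w \cdot 0$. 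Since $-w \cdot 0 = \sum_{\alpha \in \Phi(w)}\alpha$ is a sum of $\ell(w)$ distinct positive roots, the classical fact that this expression is unique implies that $-w \cdot 0$ occurs in $\wedge^j \fn$ with multiplicity $\delta_{j,\ell(w)}$, forcing $i = j$ and yielding $\dim H^i(\fn, V)^T \leq |\{w : \ell(w) = i\}| \cdot \delta_{i,j}$.

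To upgrade the bound to an equality for $i = j$, I compute the Euler characteristic of $H^*(\fn, V)^T$ from the $T$-invariants of the Koszul complex $V \otimes \wedge^\bullet \fn^\vee$. The Weyl denominator identity $\prod_{\alpha \in \Phi^+}(1 - e^{-\alpha}) = \sum_{w \in W}(-1)^{\ell(w)} e^{w \cdot 0}$ gives
\[
\sum_i (-1)^i \dim H^i(\fn, V)^T \;=\; \sum_{w \in W}(-1)^{\ell(w)} \dim V_{-w \cdot 0},
\]
and invoking the uniqueness fact collapses the right-hand side to $(-1)^j |\{w : \ell(w) = j\}|$. Together with the off-diagonal vanishing from the previous step, this forces $\dim H^j(\fn, V)^T = |\{w : \ell(w) = j\}|$.

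The main obstacle is the uniqueness of the expression of $-w \cdot 0$ as a sum of distinct positive roots. This is a statement about the root system $\Phi$ alone, independent of characteristic, and can be extracted from Kostant's harmonic decomposition of $\wedge^\bullet\fn^\vee$ in characteristic zero, or verified case-by-case using the classification of irreducible root systems. Under our running hypothesis $p > h + 1$, Kostant's theorem is available \eqref{Kostant's theorem}, and the same multiplicity statement holds in characteristic $p$, so the argument goes through.
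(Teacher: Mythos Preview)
Your proof is correct and follows essentially the same route as the paper: both filter $V = \wedge^j(\fg/\fb)^\vee$ by weights, apply Kostant's theorem~\eqref{twisted Kostant's theorem} to the graded pieces, and invoke the multiplicity fact $\dim(\wedge^j\fn)_{-w\cdot 0} = \delta_{j,\ell(w)}$ (for which the paper cites \cite[Proposition~2.2]{FPdiscrete}, rather than appealing to characteristic-zero harmonic theory or a case check).

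The one inefficiency worth noting is that your Euler-characteristic step is unnecessary. Once the multiplicity fact tells you that $(E_1)^T$ is concentrated in the single cohomological degree $j$, the $T$-equivariant differentials must vanish on the weight-zero part, so $(E_\infty)^T = (E_1)^T$ and your ``upper bound'' is already the exact dimension. The paper's proof makes precisely this observation (``by d\'evissage this immediately implies\ldots'') and never needs the Weyl denominator identity.
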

\begin{proof}
We need to prove that
\begin{align*}
\dim_{\bF_p} H^i(\fn, \wedge^j (\fg/\fb)^\vee)^T =\;\; &|\{w \in W: \ell(w) = j\}| \text{ if~$i = j$}\\
& 0 \text{ otherwise}.
\end{align*}
By Kostant's theorem~\eqref{twisted Kostant's theorem}, for all~$\lambda \in X^*(T)$ we have $H^*(\fn, \lambda)^T = 0$ unless $\lambda = -w\cdot 0$ for some~$w \in W$, and
furthermore $H^*(\fn, -w\cdot 0)^T \cong k[-\ell(w)]$ is one-dimensional and concentrated in degree~$\ell(w)$.

The finite-dimensional $B$-module~$\wedge^j (\fg/\fb)^\vee$ admits a filtration with one-dimensional graded pieces, which are in bijection with the weights of $\wedge^j (\fg/\fb)^\vee$.
Furthermore, by~\cite[Proposition~2.2]{FPdiscrete}, we have
\begin{equation}\label{classification of shifted weights}
\dim_k \Hom_T(-w \cdot 0, \wedge^j(\fg/\fb)^\vee) = \delta_{\ell(w), j}.
\end{equation}
Hence the only weights of $\wedge^j(\fg/\fb)^\vee$ of the form $-w \cdot 0$ have~$\ell(w) = j$.
Hence, if~$i \ne j$ and $\lambda$ is a weight of~$\wedge^j(\fg/\fb)^\vee$, then $H^i(\fn, \lambda)^T = 0$.
By d\'evissage this immediately implies $H^i(\fn, \wedge^j(\fg/\fb)^\vee)^T = 0$ if~$i \ne j$, and that $\dim_k H^j(\fn, \wedge^j(\fg/\fb)^\vee)^T$ is the
number of weights of $\wedge^j(\fg/\fb)^\vee$ of the form~$-w \cdot 0$.
By~\eqref{classification of shifted weights}, this is $|\{w \in W: \ell(w) = j\}|$, as desired.
\end{proof}

\begin{lemma}\label{torus invariants}
For all~$i, j \geq 0$,
the edge map of~\eqref{ordinary to restricted} induces an isomorphism
\[
H^i(U_1, \wedge^j(\fg/\fb)^\vee)^T \to H^i(\fn, \wedge^j(\fg/\fb)^\vee)^T.
\]
\end{lemma}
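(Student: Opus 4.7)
The plan is to apply the May spectral sequence~\eqref{ordinary to restricted} to the affine algebraic $k$-group $\cG = U$, the unipotent radical of $B$, so that $\Lie U = \fn$ and $U_1$ is its first Frobenius kernel:
\[
E_1^{p, q} = H^{q-p}(\fn, V) \otimes_k (\Sym^p \fn^\vee)^{(1)} \Rightarrow H^{p+q}(U_1, V),
\]
with $V = \wedge^j(\fg/\fb)^\vee$, and to prove that $(E_1^{p, q})^T = 0$ for all $p \geq 1$. Since $T$ is diagonalizable, the weight-zero functor is exact and commutes with spectral sequences; as a consequence, the filtration on $H^i(U_1, V)^T$ will collapse to the single piece $(E_\infty^{0, i})^T = (E_1^{0, i})^T = H^i(\fn, V)^T$, and the edge map will identify $H^i(U_1, V)^T$ with this piece.

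For the vanishing I would first record the $T$-weights of the two tensor factors. The weights of $(\Sym^p \fn^\vee)^{(1)}$ are of the form $-p\beta$, where $\beta$ runs through sums of $p$ positive roots (with multiplicity). For the other factor, I choose a filtration of $V$ by $B$-submodules with one-dimensional graded pieces $k_{\mu_i}$, where $\{\mu_i\}$ is the multiset of $T$-weights of $V$; such a filtration exists by Lie--Kolchin. Iterating the long exact sequence in $\fn$-cohomology together with Kostant's theorem~\eqref{Kostant's theorem} (which applies since $p \geq h-1$) shows that every $T$-weight of $H^{q-p}(\fn, V)$ has the form $w \cdot 0 + \mu$ for some $w \in W$ with $\ell(w) = q-p$ and some $T$-weight $\mu$ of $V$. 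Since $(\fg/\fb)^\vee \cong \fn$ as $T$-modules, each such $\mu$ is a sum of $j$ distinct positive roots. For $(E_1^{p, q})^T \ne 0$ we therefore need the existence of $w, \mu, \beta$ satisfying $w \cdot 0 + \mu = p\beta$.

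Since $-w \cdot 0$ is itself a sum of $\ell(w)$ distinct positive roots, the element $w \cdot 0 + \mu$ is a signed sum of distinct roots in $\Phi$, and in particular a $T$-weight of $\wedge \fg$. Lemma~\ref{weights of exterior algebra} applied with $f = 1$ and $n = p$ (which requires only $p > h$, satisfied by our assumption $p > h+1$) forces any weight of $\wedge \fg$ lying in $pX^*(T)$ to vanish; thus $p\beta = 0$, contradicting the fact that a nontrivial nonnegative combination of positive roots is nonzero. The main obstacle is concentrated in this last step: the Frobenius twist in the May spectral sequence a priori permits large $p$-multiples of weights to match with weights of $H^{q-p}(\fn, V)$, and ruling this out is exactly the content of Lemma~\ref{weights of exterior algebra} together with the bound $p > h$.
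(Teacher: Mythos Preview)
Your argument is correct and essentially identical to the paper's: both apply the May spectral sequence~\eqref{ordinary to restricted} with $\cG = U$, reduce to showing $(E_1^{r,s})^T = 0$ for $r \ne 0$, identify the weights of $H^*(\fn, V)$ via d\'evissage and Kostant's theorem as $w\cdot 0 + \mu$ with $\mu$ a weight of $\wedge^j\fn$, and invoke Lemma~\ref{weights of exterior algebra} with $f=1$, $n=p$ to force any such weight lying in $pX^*(T)$ to vanish. One purely notational point: you use the symbol $p$ simultaneously for the prime and for the spectral sequence index, so that in ``weights of $(\Sym^p \fn^\vee)^{(1)}$ are of the form $-p\beta$, where $\beta$ runs through sums of $p$ positive roots'' the three occurrences of $p$ do not all refer to the same object; the paper sidesteps this by writing $(r,s)$ for the spectral sequence indices.
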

\begin{proof}
The spectral sequence~\eqref{ordinary to restricted} for~$U$ acting on~$\wedge^j(\fg/\fb)^\vee$ takes the form
\[
E_1^{r, s} := H^{s-r}(\fn, \wedge^j(\fg/\fb)^\vee) \otimes_k \Sym^r(\fn^\vee)^{(1)} \Rightarrow H^{r+s}(U_1, \wedge^j(\fg/\fb)^\vee).
\] 
Since~$\wedge^j(\fg/\fb)^\vee$ is a $B$-module, this is a spectral sequence of $T$-modules.
To prove the lemma, it suffices to prove that~$(E_1^{r, s})^T = 0$ if~$r \ne 0$.
By Kostant's theorem~\eqref{twisted Kostant's theorem}, every weight of $H^{s-r}(\fn, \wedge^j(\fg/\fb)^\vee)$ has the form $w\cdot 0 + \mu$ for some weight~$\mu$ of $\wedge^j(\fg/\fb)^\vee$.
So it suffices to prove that if $\nu \in X^*(T)$, and $w \cdot 0 + \mu = p \nu$, then $\nu = 0$.
Since~$w \cdot 0 = w(\rho)-\rho$ is a sum of pairwise distinct negative roots, $w \cdot 0 + \mu$ is a weight of $\wedge \fg$, in fact of $\wedge \fn^- \otimes \wedge \fn$.
Since~$p > h$, if~$\nu \ne 0$ we obtain a contradiction to Lemma~\ref{weights of exterior algebra} with~$f = 1, n = p$.
\end{proof}

\begin{lemma}\label{B-cohomology}
For all~$j \geq 0$,
the edge map of~\eqref{Hochschild--Serre for Frobenius kernel} (i.e.\ restriction) induces an isomorphism
\[
H^j(B, \wedge^j(\fg/\fb)^\vee) \to H^j(U_1, \wedge^j(\fg/\fb)^\vee)^T.
\]
\end{lemma}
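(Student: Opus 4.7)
The plan is to relate the cohomology of $B$ to that of $U_1$ by factoring the restriction through two intermediate group cohomologies, then use a dimension count to upgrade injectivity to an isomorphism.

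First, I would observe that the Lyndon--Hochschild--Serre spectral sequence for the extension $1 \to U \to B \to T \to 1$ degenerates: since $T$ is diagonalizable and hence reductive, $H^p(T,-)$ vanishes in positive degrees on rational modules. This yields $H^j(B, V) \cong H^j(U, V)^T$, with $V := \wedge^j(\fg/\fb)^\vee$. Next, applying~\eqref{Hochschild--Serre for Frobenius kernel} to the smooth affine group $\cG = U$ (so $\cG_1 = U_1$ and $\cG/\cG_1 \cong U^{(1)}$ via Frobenius) gives a spectral sequence of rational $T$-modules
\[
E_2^{p,q} = H^p(U^{(1)}, H^q(U_1, V)) \Rightarrow H^{p+q}(U, V).
\]
Taking $T$-invariants (which is exact by reductivity) and combining the two steps, I obtain
\[
H^p(U^{(1)}, H^q(U_1, V))^T \Rightarrow H^{p+q}(B, V),
\]
whose edge map is exactly the restriction $H^j(B, V) \to (H^j(U_1, V)^{U^{(1)}})^T \subseteq H^j(U_1, V)^T$ appearing in the lemma.

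To show this is an isomorphism onto the full $H^j(U_1, V)^T$, I would compute both dimensions. By Lemma~\ref{reduction to Hodge cohomology} and~\eqref{dimension of Hodge cohomology}, $\dim_k H^j(B, V) = |\{w \in W : \ell(w) = j\}|$. By Lemma~\ref{torus invariants} combined with Lemma~\ref{correct dimension Hodge cohomology}, $\dim_k H^j(U_1, V)^T = \dim_k H^j(\fn, V)^T = |\{w \in W : \ell(w) = j\}|$. Since these agree, once I establish that the edge map is injective it automatically lands isomorphically onto $H^j(U_1, V)^T$, and moreover $(H^j(U_1, V)^{U^{(1)}})^T = H^j(U_1, V)^T$ for free.

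Injectivity of the edge map is equivalent to the vanishing of $E_\infty^{p, j-p}$ for $p \geq 1$, for which it suffices to show $H^p(U^{(1)}, H^q(U_1, V))^T = 0$ whenever $p > 0$. This is the main obstacle. The approach would mimic the proof of Lemma~\ref{torus invariants}: apply~\eqref{ordinary to restricted} (iteratively, since $U^{(1)}$ is not itself a Frobenius kernel) to describe $H^p(U^{(1)}, -)$ in terms of Lie algebra cohomology of $\fn^{(1)}$ and Frobenius-twisted symmetric powers of $(\fn^{(1)})^\vee$, and track $T$-weights using Kostant's theorem~\eqref{twisted Kostant's theorem}. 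Any nonzero $T$-invariant class with $p > 0$ would force a nontrivial element of $p\cdot X^*(T)$ to occur as a weight of $\wedge \fg$, which contradicts Lemma~\ref{weights of exterior algebra} under the hypothesis $p > h$. The delicate point is the careful accounting of the two layers of Frobenius twist (one from $U^{(1)}$ and one from the spectral sequence~\eqref{ordinary to restricted}), and checking that the combined weight constraints indeed reduce to a single application of Lemma~\ref{weights of exterior algebra}.
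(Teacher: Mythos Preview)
Your overall strategy differs from the paper's in where you take the Frobenius kernel: you factor $B \supset U \supset U_1$ and apply Hochschild--Serre for $U_1 \lhd U$, whereas the paper applies~\eqref{Hochschild--Serre for Frobenius kernel} directly for $B_1 \lhd B$. The dimension count is the same in both approaches, and the paper also uses it to upgrade injectivity of an edge map to an isomorphism.

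The gap is in your injectivity step. You claim that a $T$-invariant in $H^p(U^{(1)}, H^q(U_1, V))$ with $p>0$ would force a nonzero element of $pX^*(T)$ to be a weight of $\wedge\fg$, contradicting Lemma~\ref{weights of exterior algebra}. But the May spectral sequence for $U_1$ already introduces weights of the form $(w\cdot 0+\mu)+p\nu$ with $\nu$ a weight of $\Sym(\fn^\vee)$; once you know $w\cdot 0+\mu=0$ (which \emph{is} what Lemma~\ref{weights of exterior algebra} gives), the residual weight is $p\nu$, which lies in $pX^*(T)$ without any contradiction. So your iteration does not reduce to a single application of that lemma. One can rescue the argument with an additional sign consideration (all weights of $H^*(U,k)$ are $\leq 0$, while $\nu\leq 0$, forcing $\nu=0$; then use $H^p(U,k)^T=H^p(B,k)=0$ for $p>0$), but this is genuinely extra work that your sketch omits, and it is not a consequence of Lemma~\ref{weights of exterior algebra}.

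The paper avoids this entirely by working with $B_1$ instead of $U_1$ and proving the \emph{stronger} vanishing $H^s(B_1, V)=0$ for all $s<j$ (not merely on $T$-invariants). The point is that $B_1$-cohomology of a character is completely understood by Andersen--Jantzen: one has $H^*(B_1,\lambda)\ne 0$ only if $\lambda\in -(W\cdot 0)+pX^*(T)$, and $H^s(B_1,-w\cdot 0)\cong H^{s-\ell(w)}(B_1,k)$. Combined with Lemma~\ref{weights of exterior algebra} this forces any relevant weight $\lambda$ of $V$ to equal $-w\cdot 0$ with $\ell(w)=j$, and then the shift formula gives the vanishing below degree $j$ in one stroke. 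This input has no analogue for $U_1$, which is why your route is harder to close.
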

\begin{proof}
The spectral sequence~\eqref{Hochschild--Serre for Frobenius kernel} takes the form
\[
E^{r, s}_2 := H^r(B/B_1, H^s(B_1, \wedge^j (\fg/\fb)^\vee)) \Rightarrow H^{r+s}(B, \wedge^j (\fg/\fb)^\vee).
\]
We claim that $E_2^{r, s} = 0$ if~$s < j$.
This implies that the edge map
\begin{equation}\label{inclusion to prove isomorphism}
H^j(B, \wedge^j(\fg/\fb)^\vee) \to H^0(B/B_1, H^j(B_1, \wedge^j (\fg/\fb)^\vee)) \subset H^j(B_1, \wedge^j (\fg/\fb)^\vee)^T
\end{equation}
is an isomorphism.
Since $H^j(B_1, \wedge^j (\fg/\fb)^\vee)^T = H^j(U_1, \wedge^j (\fg/\fb)^\vee)^T$ via restriction (as is already true for $T_1$-invariants),
to conclude the proof of the lemma, it suffices thus to prove that the displayed inclusion in~\eqref{inclusion to prove isomorphism} is an equality,
which we do by comparing dimensions.
We begin by observing that
\[
    H^j(B_1, \wedge^j (\fg/\fb)^\vee)^T = H^j(U_1, \wedge^j (\fg/\fb)^\vee)^T = H^j(\fn, \wedge^j(\fg/\fb)^\vee)^T,
\]
where the second equality is Lemma~\ref{torus invariants}. 
This implies
\[
\dim_{\bF_p} H^j(B_1, \wedge^j (\fg/\fb)^\vee)^T = \dim_{\bF_p} H^j(G/B, \Omega^j) = \dim_{\bF_p} H^j(B, \wedge^j(\fg/\fb)^\vee),
\]
where the first equality follows from Lemma~\ref{correct dimension Hodge cohomology}, and the second equality follows from Lemma~\ref{reduction to Hodge cohomology}.
Hence the inclusion in~\eqref{inclusion to prove isomorphism} is indeed an equality.
This concludes the proof of the lemma.

There remains to prove the claim.
It suffices to prove that
\[
s < j \implies H^s(B_1, \wedge^j(\fg/\fb)^\vee) = 0.
\]
By e.g.~\cite[Section~II.9.22(1)]{Jantzenbook} or~\cite[Section~2.9]{AJinduced} (recalling that in those references, $B$ corresponds to the negative roots), 
if $H^*(B_1, \lambda) \ne 0$ then $\lambda \in -(W \cdot 0) + p X^*(T)$.
Suppose that $\lambda := -(w \cdot 0) + p \nu$ is a weight of $\wedge^j(\fg/\fb)^\vee \cong \wedge^j \fn$, and observe that $-(w \cdot 0)$ is a sum of pairwise distinct positive roots, hence it is also a weight of
$\wedge\fn$.
This implies that~$p\nu$ is a weight of $\wedge \fn^- \otimes \wedge\fn \subset \wedge \fg$
and so Lemma~\ref{weights of exterior algebra} implies that $\nu = 0$.
Hence $\lambda = -(w \cdot 0)$;
since~$\lambda$ is a weight of $\wedge^j (\fg/\fb)^\vee$, this also implies $\ell(w) = j$, by~\eqref{classification of shifted weights}.

At this point, it suffices to prove that if $\ell(w) = j$ and~$s < j$ then $H^s(B_1, -w \cdot 0) = 0$.
This is a consequence of~\cite[Section~2.9(1)]{AJinduced}, asserting that
\[
    H^s(B_1, -w \cdot 0) \otimes_{\bF_p} \lbar \bF_p \cong H^{s-\ell(w)}(B_1, \lbar \bF_p).\qedhere
\]
\end{proof}

Combining Lemmas~\ref{torus invariants} and~\ref{B-cohomology} we obtain an isomorphism of bigraded $\bF_p$-algebras
\begin{equation}\label{n to B}
\bigoplus\nolimits_{i, j} H^i(\fn, \wedge^j (\fg/\fb)^\vee)^T \cong \bigoplus\nolimits_{i, j} H^i(B, \wedge^j (\fg/ \fb)^\vee),
\end{equation}
since the edge maps are compatible with cup products.
Observe that 
the terms in~\eqref{n to B} with~$i \ne j$ vanish, by Lemma~\ref{correct dimension Hodge cohomology} and Lemma~\ref{reduction to Hodge cohomology}.
We conclude from Lemmas~\ref{reduction to Hodge cohomology} and~\ref{cycle class isomorphism} that there is an isomorphism of graded $\bF_p$-algebras
\begin{equation}\label{n to Chow}
\bigoplus\nolimits_{i} H^i(\fn, \wedge^i (\fg/\fb)^\vee)^T \cong \lbar A(G/B).
\end{equation}
We now proceed to the computation of 
\[
H^*(\fn, \wedge (\fg/\fn)^\vee)^T.
\]
We have an exact sequence of $B$-modules
\begin{equation}\label{universal extension}
0 \to (\fg/\fb)^\vee \to (\fg/\fn)^\vee \to \ft^\vee \to 0
\end{equation}
for the trivial action of $B$ on~$\ft^\vee$.
It induces a filtration on $\wedge^i (\fg/\fn)^\vee$ for all~$i$, and so a multiplicative filtration on $\wedge (\fg/\fn)^\vee$.
We have
\[
\gr^p \wedge^j(\fg/\fn)^\vee \cong \wedge^p(\fg/\fb)^\vee \otimes_{\bF_p} \wedge^{j-p} \ft^\vee,
\]
hence there is a trigraded, multiplicative spectral sequence
\begin{equation}\label{Koszul spectral sequence}
E_1^{p, q, j} := H^{p+q}(\fn, \wedge^p(\fg/\fb)^\vee)^T \otimes_{\bF_p} \wedge^{j-p} \ft^\vee \Rightarrow H^{p+q}(\fn, \wedge^j(\fg/\fn)^\vee)^T
\end{equation}
with $\deg d_1 = (1, 0, 0)$.
Lemma~\ref{correct dimension Hodge cohomology} implies that $E^{p, q, j}_1 = 0$ if~$q \ne 0$, and so~\eqref{Koszul spectral sequence} degenerates at~$E_2$.

The total grading on~$E_1^{p, 0, j}$, together with the differential~$d_1$, defines an $\bF_p$-cdga that we will denote~$(E_1, d_1)$.
We now compute a presentation of~$(E_1, d_1)$.
We will follow the conventions in Section~\ref{dg algebras}, and it will be useful to introduce some more notation.
Let $\lbar A^{(2)}(G/B)$ denote~$\lbar A(G/B)$ with all the degrees doubled.
Let $J$ be the kernel of the surjection $\Sym \ft^\vee[-2] \to \lbar A^{(2)}(G/B)$ in~\eqref{eqn:Borel presentation}, so that $\lbar A^{(2)}(G/B) = \Sym \ft^\vee[-2]/J$.
By Lemma~\ref{Borel presentation}, $J$ is generated by $r = \dim \ft^\vee$ homogeneous elements~$F_1, \ldots, F_r$ of degree $\deg(F_i) = 2m_i+2$.

\begin{lemma}\label{recognizing E_1 II}
The $\bF_p$-cdga~$(E_1, d_1)$ is isomorphic to 
\begin{equation}\label{recognized complex}
(\Sym \ft^\vee[-2]/J) \otimes_{\Sym \ft^\vee[-2]} ( \Sym \ft^\vee[-2] \otimes_{\bF_p} \wedge \ft^\vee[-1])
\end{equation}
where the second factor is~\eqref{Koszul base change}.
(In more detail, the differential on the second factor is induced by the degree-one inclusion $\ft^\vee[-1] \to \Sym \ft^\vee[-2]$.)
\end{lemma}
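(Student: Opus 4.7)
The plan is to match $(E_1, d_1)$ with~\eqref{recognized complex} in two stages: first identify the underlying graded $\bF_p$-algebras, then verify that the differentials agree on algebra generators via the derivation property of $d_1$.

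For the underlying graded algebra, I use Lemma~\ref{correct dimension Hodge cohomology}, which implies $E_1^{p, q, j} = 0$ for $q \ne 0$, so
\[
E_1 = \bigoplus_{0 \leq p \leq j} H^p(\fn, \wedge^p(\fg/\fb)^\vee)^T \otimes_{\bF_p} \wedge^{j-p}\ft^\vee,
\]
with total degree $p + j$ on the $(p, 0, j)$-component. The ``diagonal'' factor $\bigoplus_p H^p(\fn, \wedge^p(\fg/\fb)^\vee)^T$ becomes $\lbar A(G/B)$ via~\eqref{n to Chow}, sitting in doubled grading as $\lbar A^{(2)}(G/B)$, and the exterior factor is $\wedge \ft^\vee[-1]$ since each $\ft^\vee$-generator lies in total degree~$1$. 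By Lemma~\ref{Borel presentation}, the underlying algebra of $E_1$ is thus $(\Sym\ft^\vee[-2]/J) \otimes_{\bF_p} \wedge\ft^\vee[-1]$, which coincides with that of~\eqref{recognized complex}.

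For the differential, the multiplicativity of~\eqref{Koszul spectral sequence} makes $d_1$ a derivation, so it suffices to check $d_1$ on generators lying in $\lbar A^{(2)}(G/B)$ and in $\ft^\vee[-1]$. On the Chow factor $d_1$ vanishes for degree reasons: an element of $E_1^{p, 0, p}$ would map into $E_1^{p+1, 0, p}$, which involves the vanishing group $\wedge^{-1}\ft^\vee$. On $\ft^\vee[-1] \subset E_1^{0, 0, 1}$, the differential $d_1$ lands in $H^1(\fn, (\fg/\fb)^\vee)^T \subset E_1^{1, 0, 1}$, which via~\eqref{n to Chow} and Lemma~\ref{Borel presentation} is the degree-$2$ part of $\Sym\ft^\vee[-2]/J$---namely $\ft^\vee$, since $\deg F_i = 2m_i + 2 \geq 4$ forces $J$ to vanish in degree~$2$. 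I claim that $d_1|_{\ft^\vee[-1]}: \ft^\vee \to \ft^\vee$ is the identity, which agrees with the differential prescribed on~\eqref{recognized complex}.

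The main obstacle is verifying this last claim. By construction, $d_1|_{\ft^\vee[-1]}$ is the connecting homomorphism on $(\fn, T)$-cohomology associated with the $B$-equivariant short exact sequence~\eqref{universal extension}. I would compute it by an explicit cocycle calculation: lift $\lambda \in \ft^\vee$ to its zero-weight representative $\tilde\lambda \in (\fg/\fn)^\vee$ and evaluate $X \cdot \tilde\lambda$ for $X = X_\alpha \in \fn$ using the triangular decomposition and the identity $[X_\alpha, X_{-\alpha}] = H_\alpha$, producing the cocycle $X_\alpha \mapsto -\langle\lambda, \alpha^\vee\rangle X_{-\alpha}^\vee \in (\fg/\fb)^\vee$. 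Tracing this class through the isomorphisms in Lemmas~\ref{torus invariants}, \ref{B-cohomology}, \ref{reduction to Hodge cohomology}, and~\ref{cycle class isomorphism} identifies it with the first Chern class $c_1(\cL(\lambda)) \in \lbar A^1(G/B)$, which under the Borel presentation corresponds to $\lambda \in \ft^\vee$. Once this identification is in place, $(E_1, d_1)$ and~\eqref{recognized complex} agree on generators, and the derivation property gives the desired isomorphism of cdga's.
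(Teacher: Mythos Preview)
Your identification of the underlying graded algebra and the vanishing of $d_1$ on the Chow factor match the paper's proof. The divergence is in the treatment of $d_1|_{\ft^\vee[-1]}$.

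You aim to show that $d_1|_{\ft^\vee[-1]}$ is \emph{literally the identity} under the Borel presentation, by tracing the explicit cocycle for the connecting homomorphism $\del$ through the composite of Lemmas~\ref{torus invariants}, \ref{B-cohomology}, \ref{reduction to Hodge cohomology}, and~\ref{cycle class isomorphism}, and matching the result with $c_1(\cL(\lambda))$. This is correct in outline---the Borel map is precisely $\lambda \mapsto c_1(\cL(\lambda))$ in degree one---but carrying the cocycle through that chain (in particular through the comparison with $B_1$- and $B$-cohomology, and then through the cycle class map) is genuine work that you only sketch, and you rightly flag it as the main obstacle.

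The paper sidesteps this entirely. Since the lemma only asserts an \emph{isomorphism} of cdga's, it suffices to know that $\del: \ft^\vee[-1] \to \ft^\vee[-2]$ is \emph{some} linear isomorphism: any isomorphism in this slot yields an isomorphic cdga, because one can absorb $\del$ into the identification of the exterior factor $\wedge\ft^\vee[-1]$. The paper proves $\del$ is injective (hence bijective by equality of dimensions) via the two-line computation $H^0(\fn, (\fg/\fn)^\vee)^T = 0$: a weight-zero functional on $\fg/\fn$ already vanishes on $\fn^-$, and $\fn$-invariance forces $0 = (X_\alpha.\varphi)(X_{-\alpha}) = \varphi(H_\alpha)$, so it vanishes on $\ft$ as well. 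Your route yields the finer statement that $\del$ is exactly the Borel map in degree one, but the paper's argument is much shorter and dispenses with your obstacle altogether.
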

\begin{proof}
The underlying graded $\bF_p$-algebra of~\eqref{recognized complex} is 
\[
\Sym \ft^\vee[-2]/J  \otimes_{\bF_p} \wedge \ft^\vee[-1]
\]
with the total grading associated to the tensor product bigrading.
We begin by proving that $E_1$ is isomorphic to $\operatorname{Tot}(\Sym \ft^\vee[-2]/J  \otimes_{\bF_p} \wedge \ft^\vee[-1])$ as
graded $\bF_p$-algebras.
By~\eqref{Koszul spectral sequence} we have
\[
\bigoplus_{p+j = n}E^{p, 0, j}_1 = \bigoplus_{p+j = n} H^p(\fn, \wedge^p(\fg/\fb)^\vee)^T \otimes_{\bF_p} \wedge^{j-p} \ft^\vee = \bigoplus_{2p+k = n} H^p(\fn, \wedge^p(\fg/\fb)^\vee)^T \otimes_{\bF_p} \wedge^{k} \ft^\vee.
\]
By~\eqref{n to Chow} we obtain an isomorphism
\[
\lambda : E_1 \isom \operatorname{Tot}(\lbar A^{(2)}(G/B) \otimes_{\bF_p} \wedge \ft^\vee[-1]) = \operatorname{Tot}(\Sym \ft^\vee[-2]/J  \otimes_{\bF_p} \wedge \ft^\vee[-1])
\]
of graded $\bF_p$-algebras, as desired.

There remains to prove that $\lambda$ preserves differentials.
Note that $\lambda(E_1^{0,0,1}) = \ft^\vee[-1]$ and that $\lambda(E^{1, 0, 1}) = H^1(\fn, \wedge^1 (\fg/\fb)^\vee)^T = \ft^\vee[-2]$.
Since $\Sym \ft^\vee[-2]/J$ is generated by~$\ft^\vee[-2]$, and~$\wedge \ft^\vee[-1]$ is generated by~$\ft^\vee[-1]$, it suffices to prove that $d_1 |_{E^{1, 0, 1}} = 0$ 
and $d_1 |_{E^{0, 0, 1}}$ is
an isomorphism onto $E^{1, 0, 1}$.
Since~$\deg(d_1) = (1, 0, 0)$ and $E^{2, 0, 1} = 0$, we immediately see that $d_1 |_{E^{1, 0, 1}} = 0$.
On the other hand,
\[
E_1^{0, 0, 1} \xlongrightarrow{d_1} E_2^{1, 0, 1}
\]
is isomorphic to the connecting homomorphism~$\del$ of~\eqref{universal extension}.
So we need to prove that~$\del$ is an isomorphism, or equivalently that it is injective.
This holds because $H^0(\fn, (\fg/\fn)^\vee)^T = 0$.
In fact, if $\varphi: \fg \to \bF_p$ is a weight-zero functional,
then it vanishes on~$\fn$ and~$\fn^-$; and if $\varphi$ is $\fn$-invariant, then it vanishes on~$\ft$, since $0 = (X_\alpha \varphi)(X_{-\alpha}) = \varphi[X_\alpha, X_{-\alpha}]$.
\end{proof}

\begin{pp}\label{upper bound}
    Under our standing assumption that~$p > h+1$, there is an isomorphism
    \[
    H^*(\lbar \fg, \bF_p)^T \cong \Lambda_{\bF_p}(\Phi)
    \]
    of graded $\bF_p$-algebras.
\end{pp}
\begin{proof}
We are going to prove that $H(E_1, d_1) \cong \Lambda_{\bF_p}(\Phi)$ in the spectral sequence~\eqref{Koszul spectral sequence}.
Since~\eqref{Koszul spectral sequence} degenerates at~$E_2$ and is concentrated in~$q = 0$, this will imply that the cohomology algebra 
$H^*(\fn, \wedge^*(\fg/\fn)^\vee)^T$ is isomorphic to $\Lambda_{\bF_p}(\Phi)$, 
under the total grading.
Since $H^*(\lbar \fg, \bF_p)^T$ is isomorphic to $H^*(\fn, \wedge^*(\fg/\fn)^\vee)^T$ under the total grading, by Lemma~\ref{Hochschild--Serre for semidirect products}, 
this will imply the proposition (using Remark~\ref{multiplicative convergence} to get the algebra structure on $H^*(\lbar \fg, \bF_p)$).

By Lemma~\ref{recognizing E_1 II}
we have
\[
(E_1, d_1) \cong (\Sym \ft^\vee[-2]/J) \otimes_{\Sym \ft^\vee[-2]} (\Sym \ft^\vee[-2] \otimes_k \wedge \ft^\vee[-1]).
\]
Here we are using notation from Section~\ref{dg algebras}, and $\otimes_{\Sym \ft^\vee[-2]}$ coincides with the tensor product of $\Sym \ft^\vee[-2]$-cdga.
Note that the second factor is an instance of~\eqref{graded Koszul}.

By Lemma~\ref{Borel presentation}, the ideal~$J$ 
is generated by $\dim \ft^\vee$ homogenenous elements~$F_1, \ldots, F_r$ of degree~$2m_i+2$.
Hence $\wedge (J/\fm J[1]) = \Lambda_{\bF_p}(\Phi)$.
On the other hand, since $\Sym \ft^\vee[-2]$ is Cohen--Macaulay of Krull dimension~$r$, and $\lbar A(G/B)$ has Krull dimension zero, it follows that 
$(F_1, \ldots, F_r)$ is a regular sequence in~$\Sym \ft^\vee[-2]$.
Hence~\eqref{dg augmentation} constructs a quasi-isomorphism of $\Sym \ft^\vee[-2]$-cdga
\[
\Sym \ft^\vee[-2] \otimes_{\bF_p} \wedge(J/\fm J[1]) \to \Sym \ft^\vee[-2]/J.
\]
Similarly, there is a quasi-isomorphism of $\Sym \ft^\vee[-2]$-cdga
\[
\Sym \ft^\vee[-2] \otimes_{\bF_p} \wedge \ft^\vee[-1] \to \bF_p[0].
\]
As explained in Section~\ref{dg algebras}, if~$P$ is a $\Sym \ft^\vee[-2]$-cdga 
of the form~\eqref{graded Koszul}, then the functor $- \otimes_{\Sym \ft^\vee[-2]} P$ preserves quasi-isomorphisms
of $\Sym \ft^\vee[-2]$-cdga.
We thus have a diagram
\[ 
\begin{tikzcd}
(E_1, d_1)\\
(\Sym \ft^\vee[-2] \otimes_{\bF_p} \wedge(J/\fm J)[1]) \otimes_{\Sym \ft^\vee[-2]} (\Sym \ft^\vee[-2] \otimes_{\bF_p}\wedge\ft^\vee[-1]) \arrow[u] \arrow[d]\\
(\Sym \ft^\vee[-2] \otimes_{\bF_p} \wedge(J/\fm J)[1]) \otimes_{\Sym \ft^\vee[-2]} \bF_p[0]
\end{tikzcd}
\]
where both arrows are quasi-isomorphisms of $\Sym \ft^\vee[-2]$-cdga.
Since the differential on $\Sym \ft^\vee[-2] \otimes_{\bF_p} \wedge(J/\fm J)[1]$ become zero after $- \otimes_{\Sym \ft^\vee[-2]} \bF_p[0]$, we conclude that
\[
H(E_1, d_1) \cong \wedge (J/\fm J)[1] = \Lambda_{\bF_p}(\Phi),
\]
as desired.
\end{proof}

\section{Reductive groups.}

\subsection{Split reductive groups.}\label{reductive groups}
In this section we conclude the proof of Theorem~\ref{main theorem intro}.
We continue to assume that $e = 1$.
Let~$G$ be a split reductive group over~$K$ with a fixed Borel pair~$(B, T)$ and a fixed Chevalley system. 
Write~$\Phi := \Phi(G, T)$, and we recall from Section~\ref{Lie algebra cohomology} the definition of the graded-commutative algebra $\Lambda_k(\Phi)$.
Write~$\fg_K$ for the $K$-Lie algebra of~$\fg$, so that $\fg_K = \fz_K \oplus \fg_K^{\der}$ for a central subalgebra~$\fz_K$ of $K$-dimension equal to the $K$-rank of~$Z$.

The Chevalley system induces a model of~$G$ over~$\cO_K$, a compact open subgroup~$G(\cO_K)$, and an Iwahori subgroup~$I \subset G(\cO_K)$ 
corresponding to the $B$-positive roots $\Phi(B, T)$.
More explicitly, we have
\begin{gather*}
    G(\cO_K) = \langle U_\alpha(\cO_K), T(\cO_K): \alpha \in \Phi(G, T) \rangle\\
    I = \langle U_{\alpha}(\cO_K), T(\cO_K), U_{-\alpha}(\varpi\cO_K) : \alpha \in \Phi(B, T) \rangle.
\end{gather*}
Let~$Z$ be the connected centre, and~$G^\der$ the derived subgroup, of~$G$.
Let~$h$ be the maximum of the Coxeter numbers of irreducible components of the root system of~$G$, and assume throughout this section that~$p > h+1$.

Let~$T^{\der} := T \cap G^{\der}$, which is a maximal torus in~$G^{\der}$: this is because it contains and centralizes a maximal torus of $G^{\der}$,
which is self-centralizing in $G^{\der}$.
It follows from the Iwahori decomposition that the intersection $I^{\der} := I \cap G^{\der}(K)$ is an Iwahori subgroup of $G^{\der}(K)$ 
with pro-$p$ Sylow subgroup $I_1^{\der} := I_1 \cap G^{\der}(K)$.
It follows similarly that $Z_1 := Z(K) \cap I_1$ is the pro-$p$ Sylow subgroup of~$Z(\cO_K)$.
The finite torus~$T(k)$ acts by conjugation on~$I_1$, and the restriction map
\begin{equation}
    H^*(I, k) \isom H^*(I_1, k)^{T(k)}
\end{equation}
is an isomorphism.

\begin{lemma}\label{reduction to semisimple groups}
With notation as in the previous paragraph,
the multiplication map $Z_1 \times I_1^{\der} \to I_1$ is a group isomorphism.
\end{lemma}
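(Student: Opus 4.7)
The plan is as follows. Since $Z$ is central in $G$, the subgroup $Z_1 \subseteq I_1$ is central, so the multiplication map is automatically a group homomorphism, and only bijectivity needs to be checked.

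First, I would apply the Iwahori--Moy--Prasad decomposition of Lemma~\ref{MP decomposition} simultaneously to $I_1 \subset G(K)$ and to $I_1^{\der} \subset G^{\der}(K)$. Since $T^{\der} = T \cap G^{\der}$ is a maximal torus of $G^{\der}$, we have $\Phi(G, T) = \Phi(G^{\der}, T^{\der})$; and each root subgroup $U_\alpha$ is contained in $G^{\der}$, so the positive- and negative-root factors of the two Moy--Prasad decompositions coincide. The intersection $T(K) \cap G^{\der}(K) = T^{\der}(K)$ then implies that the torus factor of the Iwahori decomposition of $I_1^{\der}$ is precisely $T^{\der}(K)_{1/h}$. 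Consequently the problem reduces to showing that multiplication induces an isomorphism
\[
Z_1 \times T^{\der}(K)_{1/h} \xrightarrow{\;\sim\;} T(K)_{1/h}.
\]

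For this, I would use the standard identification $T(K)_{1/h} = \ker(T(\cO_K) \to T(k)) \cong X_*(T) \otimes_{\bZ}(1+\varpi\cO_K)$, together with its analogues for $Z$ and $T^{\der}$ (which in particular realizes $Z_1$ as the pro-$p$ Sylow of $Z(\cO_K)$). The isogeny $Z \times T^{\der} \to T$, whose kernel $Z \cap T^{\der}$ is a finite subgroup of the centre of $G^{\der}$, produces a short exact sequence
\[
0 \to X_*(Z) \oplus X_*(T^{\der}) \to X_*(T) \to Q \to 0
\]
of finitely generated abelian groups with $Q$ finite of order dividing the connection index of the root system of $G^{\der}$. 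A case-by-case inspection of the classification then shows that under the standing hypothesis $p > h+1$, the order of $Q$ is coprime to $p$. Since $1+\varpi\cO_K$ is torsion-free (being isomorphic to $\cO_K$ via the $p$-adic logarithm, as $e = 1 < p-1$), tensoring the sequence with $1+\varpi\cO_K$ preserves exactness and annihilates $Q$, yielding the desired isomorphism.

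The main technical ingredient is the coprimality of $|Q|$ with $p$, which is the only place where the Coxeter number hypothesis enters; the remainder is a formal consequence of the Moy--Prasad decomposition combined with the cocharacter description of the congruence filtration on a split torus.
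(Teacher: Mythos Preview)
Your proposal is correct and follows essentially the same strategy as the paper. Both arguments use the Iwahori decomposition to reduce to the torus factor, and then invoke that the isogeny $Z \times T^{\der} \to T$ has degree coprime to~$p$ under the hypothesis $p > h+1$. The only cosmetic difference is that you work with the cocharacter lattice and tensor the short exact sequence
\[
0 \to X_*(Z) \oplus X_*(T^{\der}) \to X_*(T) \to Q \to 0
\]
with the torsion-free pro-$p$ group $1+\varpi\cO_K$, whereas the paper dualizes and works with the character lattice inclusion $X^*(T) \hookrightarrow X^*(T^{\der}) \times X^*(Z)$, then applies $\Hom(-,\cO_K^\times)$ and localizes at~$p$. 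These are equivalent formulations. The paper also identifies the finite cokernel more explicitly as $X^*(T^{\der})/\langle \bZ\cdot\Phi\rangle$ (the character group of $Z(G^{\der})$), which makes the coprimality with~$p$ immediate from the known bounds on the connection index, rather than appealing to a case-by-case check.
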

\begin{proof}
By the Iwahori decomposition, it suffices to prove that the multiplication $Z_1 \times \Syl_p(T^\der(\cO_K)) \to \Syl_p(T(\cO_K))$ is an isomorphism.
Let~$r$ be the rank of~$T$.
The pullback $X^*(T) \to X^*(T^{\der}) \times X^*(Z)$ is injective, with cokernel
isomorphic to $X^*(T^{\der})/\langle \bZ \cdot \Phi \rangle$,
which is a finite abelian $p$-coprime group, by our assumption that~$p > h+1$.
The pullback is therefore isomorphic to a map $\bZ^r \to \bZ^r$ induced by a diagonal matrix with diagonal entries~$n_1, \ldots, n_r$ coprime to~$p$. 
Hence the
multiplication $Z(\cO_K) \times T^{\der}(\cO_K) \to T(\cO_K)$ is isomorphic to a map $(\cO_K^\times)^r \to (\cO_K^\times)^{r}$ which is the direct product of $x \mapsto x^{n_i}$.
Hence its kernel and cokernel are finite abelian $p$-coprime groups,
and so they become zero after localizing at~$p$, as desired. 
\end{proof}

We now compute the rational cohomology of~$I$, and deduce a lower bound on its $k$-cohomology.

\begin{lemma}\label{lower bound}
    Let $G$ be a split reductive group over~$K$ with connected centre~$Z$ and root system~$\Phi$. 
    Let~$I \subset G(K)$ be an Iwahori subgroup, and let~$Z_1 = Z(K) \cap I_1$.
    Then the following are true:
    \begin{enumerate}
    \item $H^*(I, K) \cong H^*(\fg_K, K)$.
    \item $\dim_K H^*(\fg_K, K) = \dim_k (H^*(Z_1, k) \otimes_k \Lambda_k(\Phi)^{\otimes f})$.
    \item $\dim_k H^*(I, k) \geq \dim_k (H^*(Z_1, k) \otimes_k \Lambda_k(\Phi)^{\otimes f})$.
    \end{enumerate}
    \end{lemma}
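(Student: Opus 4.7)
The plan is as follows. Part (1) is immediate from the characteristic-zero Lazard isomorphism~\eqref{characteristic zero Lazard isomorphism} applied to $\Gamma = I$.

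For part (2), I will compute each side as a power of~$2$. On the left, I first invoke~\eqref{Q_p-Lie to K-Lie}: since $e = 1$, the extension $K/\bQ_p$ is unramified of degree~$f$, hence Galois, and $\fg_K \otimes_{\bQ_p} K \cong \prod_{\sigma \in \Gal(K/\bQ_p)} \fg_K^\sigma$, so the K\"unneth formula identifies $\dim_K H^*_{\bQ_p\text{-Lie}}(\fg_K, K)$ with the $f$-th power of $\dim_K H^*_{K\text{-Lie}}(\fg_K, K)$. Decomposing $\fg_K = \fz_K \oplus \fg_K^\der$, applying K\"unneth once more, and using~\eqref{cohomology of semisimple Lie algebra} on the semisimple factor (and $H^*_K(\fz_K, K) \cong \wedge \fz_K^\vee$ for the abelian factor), I obtain $\dim_K H^*_K(\fg_K, K) = 2^{\dim Z} \cdot \dim_K \Lambda_K(\Phi)$. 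On the right, since $Z$ is a torus, the isomorphism $Z(\cO_K) \cong X_*(Z) \otimes_\bZ \cO_K^\times$ identifies $Z_1$ with $X_*(Z) \otimes_\bZ (1+p\cO_K)$, which is a free $\bZ_p$-module of rank $f \cdot \dim Z$; in particular $Z_1$ is a uniform pro-$p$ group, so by~\cite[Proposition~V.2.5.7.1]{Lazard}, $H^*(Z_1, k)$ is an exterior algebra on $f \cdot \dim Z$ generators and has dimension $2^{f \dim Z}$. Combined with $\dim_k \Lambda_k(\Phi)^{\otimes f} = (\dim_K \Lambda_K(\Phi))^f$, the two sides match.

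For part (3), I apply the universal coefficient sequences~\eqref{universal coefficients I} and~\eqref{universal coefficients II}. Since $I$ is a compact $p$-adic analytic group and $H^*(I, k)$ is finite-dimensional (e.g.\ because $I = T(k)\ltimes I_1$ with $T(k)$ of order coprime to~$p$, and $H^*(I_1, k)$ is finite-dimensional by Lazard), each $H^i(I, \cO_K)$ is a finitely generated $\cO_K$-module, which decomposes as $\cO_K^{r_i} \oplus T_i$ with $T_i$ of finite length. Then~\eqref{universal coefficients I} gives $r_i = \dim_K H^i(I, K)$, while~\eqref{universal coefficients II} yields $\dim_k H^i(I, k) \geq r_i$. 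Summing over~$i$ and combining with parts~(1) and~(2) produces the desired inequality.

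The only subtlety worth noting is the finite generation of $H^i(I, \cO_K)$, which is a standard consequence of Lazard's finiteness theorem for the continuous cohomology of compact $p$-adic analytic groups and deserves only a citation. Everything else is bookkeeping with dimensions using the structural results already collected in Section~\ref{background}.
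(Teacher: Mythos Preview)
Your proof is correct and follows essentially the same route as the paper: part~(1) via~\eqref{characteristic zero Lazard isomorphism}, part~(2) by combining~\eqref{Q_p-Lie to K-Lie} with K\"unneth for $\fg_K = \fz_K \oplus \fg_K^{\der}$ and~\eqref{cohomology of semisimple Lie algebra}, and part~(3) from~\eqref{universal coefficients I}--\eqref{universal coefficients II}. The only cosmetic differences are that the paper passes to the split $\bQ_p$-form $\fg_{\bQ_p}$ rather than working with the Galois twists $\fg_K^\sigma$ (equivalent since $G$ is split), and that you make the finite generation of $H^i(I,\cO_K)$ explicit where the paper leaves it implicit.
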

    \begin{proof}
    By~\eqref{universal coefficients I} and~\eqref{universal coefficients II} we see that $\dim_k H^*(I, k) \geq \dim_K H^*(I, K)$.
    Hence part~(3) follows from parts~(1) and~(2).
      Part~(1) is a special case of the Lazard isomorphism in characteristic zero~\eqref{characteristic zero Lazard isomorphism}; we emphasize that $H^*(\fg_K, K)$ denotes here the
      Lie algebra cohomology with $K$-coefficients of the $\bQ_p$-Lie algebra~$\fg_K$.

    We now prove part~(2).
    Writing~$\fg_{\bQ_p}$ for the split $\bQ_p$-form of~$\fg_K$, and applying~\eqref{Q_p-Lie to K-Lie}, we obtain
    \[
    H^*(\fg_K, K) \cong \bigotimes\nolimits_{i=0}^{f-1}H^*(\fg_{\bQ_p}, \bQ_p) \otimes_{\bQ_p} K
    \]
    as graded $K$-algebras.
    Hence the left-hand side of~(2) is equal to $(\dim_{\bQ_p} H^*(\fg_{\bQ_p}, \bQ_p))^f$, 
    and the K\"unneth formula applied to the decomposition $\fg_{\bQ_p} = \fz_{\bQ_p} \oplus \fg_{\bQ_p}^{\der}$ implies that
    \[
    \dim_{\bQ_p} H^*(\fg_{\bQ_p}, \bQ_p) = \dim_{\bQ_p}H^*(\fz_{\bQ_p}, \bQ_p)\cdot \dim_{\bQ_p}H^*(\fg_{\bQ_p}^{\der}, \bQ_p).
    \]
    On the other hand, since~$K$ is unramified and~$p > h+1 \geq 2$, 
    we have 
    \[
    Z_1 \cong (1+p \cO_K)^{\rank Z} \cong \cO_K^{\rank Z}.
    \]
    Hence $\dim_k H^*(Z_1, k) = 2^{f \rank Z}$, which also equals
    $\dim_{\bQ_p} H^*(\fz_{\bQ_p}, \bQ_p)^f$, since $H^*(\fz_{\bQ_p}, \bQ_p)$ is isomorphic to~$\wedge \fz_{\bQ_p}^\vee$.
    Since \eqref{cohomology of semisimple Lie algebra} shows that $H^*(\fg_{\bQ_p}^{\der}, \bQ_p) \cong \Lambda_{\bQ_p}(\Phi)$,
    this concludes the proof of part~(2).
    \end{proof}

The next lemma shows that the cohomology of Iwahori subgroups with coefficients in~$k$ does not change under central isogenies, provided that~$p > h+1$, as usual.

\begin{lemma}\label{reduction to simply connected groups}
Let~$G^{\der}$ be a split semisimple group over~$K$ with a fixed Chevalley system.
Let~$\phi: G^{\sccover} \to G^{\der}$ be a simply connected cover.
Let $T^{\sccover}$ be the preimage of~$T^{\der}$,
and let~$I^{\sccover}$ be the Iwahori subgroup of~$G^{\sccover}(K)$ arising from the Chevalley system.
Then $H^*(I^{\der}, k) \isom H^*(I^{\sccover}, k)$ via restriction.
\end{lemma}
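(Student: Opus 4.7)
The plan is to show that $\phi$ restricts to an isomorphism of pro-$p$ Iwahori subgroups, and then to compare the resulting $T(k)$-invariants on the common cohomology.

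First, I would prove that $\phi$ restricts to a group isomorphism $I_1^\sccover \isom I_1^\der$. By the Iwahori decomposition of Lemma~\ref{MP decomposition}, $\phi$ is already the identity on each root subgroup, so the issue reduces to the pro-$p$ Sylow of the torus. Using $T(\cO_K) \cong X_*(T) \otimes_\bZ \cO_K^\times$ to isolate this as $X_*(T) \otimes_\bZ (1+p\cO_K)$, the map in question is $X_*(T^\sccover) \otimes_\bZ (1+p\cO_K) \to X_*(T^\der) \otimes_\bZ (1+p\cO_K)$. Its cokernel $X_*(T^\der)/X_*(T^\sccover)$ divides the connection index of~$\Phi$, which is prime to~$p$ under our assumption $p > h+1$: the prime divisors of the connection index lie in $\{2,3,5\}$ by~\cite[Proposition~8]{Demazureinvariants}, and a case-by-case check shows that $p>h+1$ exceeds the connection index in every Dynkin type. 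Tensoring with the pro-$p$ group $1+p\cO_K$ therefore annihilates both the cokernel and the associated $\mathrm{Tor}$ term.

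Second, I would apply the Hochschild--Serre spectral sequence to the Teichm\"uller splitting $I \cong T(k) \ltimes I_1$, valid for both $G^\sccover$ and $G^\der$: since $|T^\sccover(k)|$ and $|T^\der(k)|$ are prime to~$p$, this yields isomorphisms $H^*(I^\sccover, k) \cong H^*(I_1^\sccover, k)^{T^\sccover(k)}$ and $H^*(I^\der, k) \cong H^*(I_1^\der, k)^{T^\der(k)}$. Transporting across the isomorphism~$\phi$, the lemma is reduced to proving the identity $H^*(I_1^\sccover, k)^{T^\sccover(k)} = H^*(I_1^\sccover, k)^{T^\der(k)}$, where the $T^\der(k)$-action on the right-hand side is the pullback by $\phi$ of the conjugation action on~$I_1^\der$.

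The hard part will be this last comparison. My approach is to observe that $\ker(\phi|_{T^\sccover})$ is a finite central subgroup $\mu$ of~$T^\sccover$, and since $T^\sccover$ and $T^\der$ have the same dimension, $T^\sccover \to T^\der$ is in fact a surjective map of algebraic groups. The adjoint action of $T^\sccover$ on $\lbar \fg^\sccover_k$ consequently factors through~$T^\der$, so the algebraic $T^\sccover$- and $T^\der$-invariants of the Lazard $E_1$-page $H^*(\lbar \fg^\sccover_k, k)$ coincide. Since the weights of this $E_1$-page lie in the intrinsic root lattice $\bZ \Phi \subseteq X^*(T^\der) \subseteq X^*(T^\sccover)$, the proof of Lemma~\ref{weights of gbar-cohomology} applies verbatim to each of $T^\sccover(k)$ and $T^\der(k)$, identifying both $T(k)$-invariants with this common algebraic weight-zero subspace. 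The $T(k)$-equivariance of the spectral sequence~\eqref{Lazard spectral sequence II} and the prime-to-$p$ exactness of invariants then propagate this equality from the $E_1$-page to $H^*(I_1^\sccover, k)$, completing the proof.
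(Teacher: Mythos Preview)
Your argument is correct, but it follows a genuinely different route from the paper's.

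The paper argues as follows: from the isomorphism $\phi: \Syl_p(I^{\sccover}) \isom \Syl_p(I^{\der})$ one obtains a four-term exact sequence $0 \to \mu_1 \to I^{\sccover} \to I^{\der} \to \mu_2 \to 0$ with $\mu_1, \mu_2$ finite of $p$-coprime order, and Hochschild--Serre then shows that the restriction map $H^*(I^{\der}, k) \to H^*(I^{\sccover}, k)$ is injective. Surjectivity is proved by a dimension count: Theorem~\ref{main theorem} computes $H^*(I^{\sccover}, k) \cong \Lambda_k(\Phi)^{\otimes f}$, while Lemma~\ref{lower bound}(3) gives $\dim_k H^*(I^{\der}, k) \geq \dim_k \Lambda_k(\Phi)^{\otimes f}$. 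Your approach instead bypasses Theorem~\ref{main theorem} and Lemma~\ref{lower bound} entirely, reducing to the equality of $T^{\sccover}(k)$- and $T^{\der}(k)$-invariants on the common $E_1$-page, which you get from Lemma~\ref{weights of gbar-cohomology} (note that the $T^{\der}(k)$ case follows \emph{a fortiori} from the simply-connected case, since $(p^f-1)X^*(T^{\der}) \subseteq (p^f-1)X^*(T^{\sccover})$; you need not re-run the proof). This makes your argument logically lighter, at the cost of having to check that the $p$-valuation on $I_1^{\sccover}$ agrees via~$\phi$ with the one on $I_1^{\der}$, so that the Lazard spectral sequence really is $T^{\der}(k)$-equivariant; this is true but you should say a word about why (it reduces to the torus filtration, and there one uses that $X^*(T^{\sccover})/X^*(T^{\der})$ has $p$-coprime order).

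Two small corrections. First, \cite[Proposition~8]{Demazureinvariants} concerns the torsion index, not the connection index; the fact you want (that $X_*(T^{\der})/X_*(T^{\sccover})$ has $p$-coprime order when $p > h+1$) is elementary from the tables, since the connection index divides $h$ in type~$A$ and is at most $4$ otherwise. Second, in your final paragraph you should make explicit that the inclusion $H^*(I_1^{\sccover}, k)^{T^{\der}(k)} \subseteq H^*(I_1^{\sccover}, k)^{T^{\sccover}(k)}$ holds because $\ker(\phi|_{T^{\sccover}(k)})$ is central and so acts trivially by conjugation; the spectral-sequence argument then gives equality of dimensions, hence equality.
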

\begin{proof}
    Arguing in the same way as in the proof of Lemma~\ref{reduction to semisimple groups}, we see that
    \[
    \phi: \Syl_p T^{\sccover}(\cO_K) \to \Syl_p T^{\der}(\cO_K)
    \]    
    is an isomorphism.
    Since $\phi : U_\alpha(K) \to \phi(U_\alpha(K))$ is an isomorphism for all~$\alpha \in \Phi$, we deduce that $\phi$ induces an isomorphism
    \[
    \phi: \Syl_p(I^{\sccover}) \to \Syl_p(I^{\der}).
    \]
    Hence $\phi$ has normal image. 
    If we write $\mu_1 := \ker(\phi: I^{\sccover} \to I^{\der})$, and $\mu_2 = \coker(\phi: I^{\sccover} \to I^{\der})$, then $\mu_1$ and~$\mu_2$ are finite $p$-coprime groups,
    and there is an exact sequence
    \[
    0 \to \mu_1 \to I^{\sccover} \xrightarrow{\phi} I^{\der} \to \mu_2 \to 0.
    \]
    A computation with the Hochschild--Serre spectral sequence shows that the inflation map $H^*(\phi(I^{\sccover}), k) \to H^*(I^{\sccover}, k)$ is an isomorphism, 
    and the restriction map
    \[
    H^*(I^{\der}, k) \to H^*(\phi(I^{\sccover}), k)^{\mu_2}
    \]
    is an isomorphism.
    To conclude, it suffices therefore to prove that 
    \[
    \dim_k H^*(I^{\der}, k) \geq \dim_k H^*(I^{\sccover}, k).
    \]
    Theorem~\ref{main theorem} together with the K\"unneth formula, applied to the decomposition of $I^{\sccover}$ according to direct almost-simple factors of~$G^{\sc}$,
    implies that $H^*(I^{\sccover}, k) \cong \Lambda_k(\Phi)^{\otimes f}$.
    Hence it suffices to prove that 
    \[
    \dim_k H^*(I^{\der}, k) \geq \dim_k \Lambda_k(\Phi)^{\otimes f}.
    \]
    This follows from Lemma~\ref{lower bound}~(3) applied to the group~$G^{\der}$, which has trivial~$Z_1$.
\end{proof}

Finally, the next two propositions compute the cohomology of~$I$ and~$G(\cO_K)$.

\begin{pp}\label{pp: Iwahori cohomology for reductive groups}
Let $G$ be a split reductive group over~$K$ with root system~$\Phi$ and connected centre~$Z$.
Let~$I \subset G(K)$ be an Iwahori subgroup arising from a choice of Chevalley system in~$G$, and let~$Z_1 = Z(K) \cap I_1$.
Assume that~$e = 1$ and~$p > h+1$.
Then
\[
H^*(I, k) \cong H^*(Z_1, k) \otimes_k \Lambda_k(\Phi)^{\otimes f}.
\]
\end{pp}
\begin{proof}
    Lemma~\ref{reduction to semisimple groups} implies that
    \begin{equation}\label{factorization of Iwahori cohomology}
    H^*(I, k) = (H^*(Z_1, k) \otimes_k H^*(I^{\der}_1, k))^{T(k)} = H^*(Z_1, k) \otimes_k H^*(I^{\der}_1, k)^{T(k)}
    \end{equation}
    since the $T(k)$-action on~$Z_1$ is trivial.
    There remains to prove that 
    \[
    H^*(I_1^{\der}, k)^{T(k)} \cong \Lambda_k(\Phi)^{\otimes f}.
    \]
    Note that $H^*(I_1^{\der}, k)^{T^{\der}(k)}$ is the cohomology of an Iwahori subgroup of $G^{\der}$, which by Lemma~\ref{reduction to simply connected groups}
    and Theorem~\ref{main theorem} is isomorphic to $\Lambda_k(\Phi)^{\otimes f}$.
    So it suffices to prove
    that the inclusion 
    \[
    H^*(I_1^{\der}, k)^{T(k)} \to H^*(I_1^{\der}, k)^{T^{\der}(k)}
    \]
    is an isomorphism.
    This is a consequence of Lemma~\ref{lower bound}~(3), which together with~\eqref{factorization of Iwahori cohomology} implies that
    \[
        \dim_k H^*(I^{\der}_1, k)^{T(k)} \geq \dim_k \Lambda_k(\Phi)^{\otimes f} = \dim_k H^*(I_1^{\der}, k)^{T^{\der}(k)}.\qedhere
    \]
\end{proof}

\begin{pp}\label{pp: cohomology of the maximal compact subgroup}
Let $G$ be a split reductive group over~$K$ with a fixed Chevalley system, and let~$I \subset G(\cO_K)$ be an Iwahori subgroup.
Assume that $e = 1$ and~$p > h+1$.
Then the restriction map $H^*(G(\cO_K), k) \to H^*(I, k)$ is an isomorphism.
\end{pp}
\begin{proof}
The composition
\[
H^*(G(\cO_K), k) \xrightarrow{\res} H^*(I, k) \xrightarrow{\operatorname{cores}} H^*(G(\cO_K), k)
\]
is multiplication by the index~$(G(\cO_K):I)$, which is coprime to~$p$: in fact, $I$ is the preimage of a Borel subgroup of $G(k)$ under the mod~$p$ reduction map
$G(\cO_K) \to G(k)$.
Hence the restriction map is injective.
To see that it is an isomorphism, it suffices therefore to prove that 
\[
\dim_k H^*(G(\cO_K), k) \geq \dim_k H^*(I, k).
\]
By Proposition~\ref{pp: Iwahori cohomology for reductive groups} and Lemma~\ref{lower bound}~(2), the right-hand side is equal to $\dim_K H^*(\fg_K, K)$.
Since $\dim_k H^*(G(\cO_K), k) \geq \dim_K H^*(G(\cO_K), K)$, it therefore suffices to prove that $H^*(G(\cO_K), K) \cong H^*(\fg_K, K)$.
This is a special case of~\eqref{characteristic zero Lazard isomorphism}.
\end{proof}

\subsection{Central division algebras.}\label{Morava}
Continue to assume that~$K/\bQ_p$ is an unramified extension of degree~$f$, and choose~$n < p-1$.
Write~$q := p^f$.
Let~$D/K$ be a central division algebra of invariant~$1/n$, with maximal order~$\cO_D$, maximal ideal~$\fm_D$, and residue field~$k_D$.
Let~$U^i_D = 1+\fm_D^i$, and recall that~$U^1_D$ is the unique pro-$p$ Sylow subgroup of~$\cO_D^\times$, and that $\cO_D^\times \cong k_D^\times \ltimes U^1_D$ via the Teichm\"uller lift.

Let~$I \subset \GL_{n}(K)$ be the upper-triangular Iwahori subgroup, let $T \subset \GL_n$ be the diagonal maximal torus, and write 
$\varepsilon_i \in X^*(T) \cong \bZ^n$ for the $i$-th standard basis vector.
Following the discussion below Lemma~\ref{alternative presentation of tld g_e}, we put $\fg := kZ \oplus \mathfrak{sl}_{n, k}$, where~$Z$ is a central element.
Since $p > n+1$, this is actually isomorphic to~$\mathfrak{gl}_{n, k}$.
Let~$\fn \subset \fg$ be the upper-triangular nilpotent subalgebra, let $\fb^{-} \subset \fg$ 
be the lower-triangular subalgebra, and let
$\tld \fg_1 := v\mathfrak{b}^-[v] \oplus \fn[v]$ be as in Section~\ref{Lie algebras} (over the base~$k$).
It has an $n^{-1}\bZ$-grading, such that multiplication by~$v$ has degree~$1$, and the central element~$Z$ has degree~$1$.
We will work with the uniformizer~$p$ of~$\cO_K$, hence we put~$\epsilon := v$.

Recall that $\tld \fg_1$ and~$\fg[v]$ also have an $X^*(T)$-grading, arising from the root decomposition of~$\fg$. 
By Lemma~\ref{reduction to semisimple groups} and Theorem~\ref{graded MP}, there is a saturated, $I$-invariant $p$-valuation on~$I_1$ admitting an isomorphism
\[
\gr I_1 \isom \tld \fg_1
\]
of $n^{-1}\bZ$-graded $k[\epsilon]$-Lie algebras,
such that the $T(k)$-action on the left-hand side is induced by the $X^*(T)$-grading on the right-hand side.
More precisely, there is an isomorphism
\begin{equation}\label{characters of finite split torus}
\chi: X^*(T)/(q-1)X^*(T) \isom \Hom(T(k), k^\times)
\end{equation}
such that $\chi(\varepsilon_i) : T(k) = (k^\times)^n \to k^\times$ is projection to the $i$-th factor.
The isomorphism $\gr I_1 \isom \tld \fg_1$ is then equivariant with respect to~$\chi$, in the sense that for any weight~$\lambda$, 
the group~$T(k)$ acts on the preimage of the $\lambda$-graded part of~$\tld \fg_1$ by the character~$\chi(\lambda)$.

There is the following analogue of~\eqref{characters of finite split torus} for the group $D^\times$.
Let~$w \in S_n$ be the $n$-cycle~$(1\cdots n)$.
Then there is an isomorphism
\begin{equation}\label{characters of finite nonsplit torus}
\chi_{D}: X^*(T)/(qw-1)X^*(T) \isom \Hom(k_D^\times, k_D^\times)
\end{equation}
such that 
$\chi_D(\varepsilon_i) : k_D^\times \to k_D^\times$ is $x \mapsto x^{q^{i-1}}$.

The next lemma produces an alternative presentation of~$\tld \fg_1$, which will be useful in computing with~$\cO_D^\times$.
If $i, j$ are integers, we will write $j\oplus i$ for the unique integer in $\{1, \ldots, n\}$ congruent to $j+i$ modulo~$n$.

\begin{lemma}\label{alternative model for gbar}
Let $k^n := \prod_{i=1}^{n}k$, and let~$\Phi$ be the left shift operator on~$k^n$.
Then there are $k$-linear isomorphisms $\lambda_i: \gr^{i/n} \tld \fg_1 \to k^n$ such that
\begin{enumerate}
\item $\epsilon : \gr^{i/n} \tld \fg_1 \to \gr^{i/n+1} \tld \fg_1$ corresponds to the identity.
\item $\lambda_{i+j}([x,y]) = \lambda_i(x)\Phi^i(\lambda_j y)-\lambda_j(y)\Phi^j(\lambda_i x)$ for all $x \in \gr^{i/n}\tld \fg_1, y \in \gr^{j/n}\tld \fg_1$.
\item $T$ acts on the the preimage of the $j$-th factor of $k^n$ by $\varepsilon_j-\varepsilon_{j\oplus i}$.
\end{enumerate}
\end{lemma}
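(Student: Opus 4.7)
The strategy is to transport the matrix structure of $\mathfrak{gl}_n(k)$ via Corollary \ref{alternative presentation of tld g}, exploiting the ``cyclic'' shape of its Coxeter-graded pieces. The end of Section \ref{Lie algebras} extends Corollary \ref{alternative presentation of tld g} to the reductive setting $\fg = kZ \oplus \mathfrak{sl}_n$, so we have $T$-equivariant isomorphisms $\gr^{i/n} \tld \fg_1 \cong \fg[i \bmod n]$ compatible with multiplication by $v$. Under the identification $\fg \cong \mathfrak{gl}_n(k)$, each subspace $\fg[i \bmod n]$ is spanned by the elementary matrices $E_{j, j \oplus i}$ for $j = 1, \ldots, n$, with the convention that $j \oplus 0 = j$, so that $E_{j, j \oplus i} = E_{j, j}$ when $n \mid i$. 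Define $\lambda_i$ by sending this basis to the standard basis $\{e_j\}$ of $k^n$.

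With this definition, property~(1) is immediate from the $v$-equivariance of Corollary \ref{alternative presentation of tld g}, and property~(3) follows from the fact that the adjoint action of $T$ on $E_{j, j \oplus i}$ has weight $\varepsilon_j - \varepsilon_{j \oplus i}$ (zero when $j = j \oplus i$).

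The content is property~(2). By bilinearity, it suffices to check on basis elements. The graded bracket on $\bigoplus_{i \geq 1}\gr^{i/n} \tld \fg_1$ is transported via the corollary to the bracket on $\bigoplus_{i \geq 1}\fg[i \bmod n]$, which is just the matrix commutator read cyclically. Using the standard identity $[E_{a,b}, E_{c,d}] = \delta_{bc} E_{a,d} - \delta_{da} E_{c,b}$, one finds
\[
[E_{r, r \oplus i},\, E_{s, s \oplus j}] = \delta_{r \oplus i,\, s}\, E_{r,\, r \oplus (i+j)} - \delta_{s \oplus j,\, r}\, E_{s,\, s \oplus (i+j)},
\]
which translates under $\lambda_{i+j}$ to $\delta_{r \oplus i,\, s}\, e_r - \delta_{s \oplus j,\, r}\, e_s$. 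On the other hand, the left shift acts by $\Phi(e_k) = e_{k-1}$ (with $e_0 := e_n$), so the right-hand side of~(2) computes to
\[
e_r \cdot e_{s - i} - e_s \cdot e_{r - j} = \delta_{r + i,\, s}\, e_r - \delta_{s + j,\, r}\, e_s,
\]
where the product is componentwise and all indices are read modulo $n$. This matches the preceding expression.

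The main obstacle is verifying that the displayed commutator identity holds uniformly across the cases parameterized by whether each of $i$, $j$, $i + j$ is divisible by $n$, since this controls whether the corresponding basis element is a root vector or a diagonal matrix. In the all-Cartan case $n \mid i$ and $n \mid j$ the commutator vanishes, and the right-hand side of~(2) also vanishes because $\Phi^i$ and $\Phi^j$ act as the identity. The mixed cases require examining the commutation relations $[E_{a, a}, E_{c, c \oplus j'}]$ and keeping track of the $v$-powers introduced by the identification in Corollary \ref{alternative presentation of tld g} to confirm that the principal symbol of the commutator lands in the correct graded piece with the correct representative; this is routine but does require some care.
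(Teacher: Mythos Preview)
Your proof is correct and follows essentially the same route as the paper: invoke Corollary~\ref{alternative presentation of tld g} (in its reductive extension) to identify $\gr^{i/n}\tld\fg_1$ with $\fg[i]$, then send $E_{j,j\oplus i}\mapsto e_j$, and read off parts~(1)--(3) from the elementary-matrix commutator identity $[E_{a,b},E_{c,d}]=\delta_{bc}E_{a,d}-\delta_{da}E_{c,b}$ and the obvious weight computation. Your final paragraph is over-cautious: since Corollary~\ref{alternative presentation of tld g} already asserts that the isomorphisms $\gr^{i/n}\tld\fg_1\to\fg[i]$ are equivariant for the Lie bracket, the $v$-powers are taken care of once and for all, and the matrix identity holds uniformly for diagonal and off-diagonal elementary matrices alike, so no case analysis on divisibility of $i,j,i+j$ by $n$ is required.
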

\begin{proof}
Recall that we write~$\fg[i]$ for the summand of~$\fg$ of Coxeter degree~$i$, so that
\[
\fg[i] := \bigoplus_{j=1}^n kE_{j, j\oplus i},
\]
where~$E_{a, b}$ denotes the elementary matrix with~$1$ in the $(a, b)$-entry, and zero elsewhere.
By Corollary~\ref{alternative presentation of tld g},
there are isomorphisms $\gr^{i/n}\tld \fg_1 \to \fg[i]$, 
which are equivariant for the Lie bracket and the action of~$T$, and for which $\epsilon$ corresponds to the identity.
If we compose these isomorphisms with 
\[
\fg[i] \isom k^n, E_{j, j\oplus i} \mapsto e_j,
\]
where~$e_j$ is the $j$-th standard basis vector of $k^n$,
then part~(1) is true.
Part~(2) is then a rephrasing of the identity $[E_{r,s}, E_{u,v}] = \delta_{s,u} E_{r,v}-\delta_{v,r} E_{u,s}$, and part~(3) is the statement that~$E_{j,j\oplus i}$ 
has weight $\varepsilon_j-\varepsilon_{j\oplus i}$.
\end{proof}

\begin{lemma}\label{gbar for D*}
There is a saturated, $\cO_D^\times$-invariant $p$-valuation on $U^1_D$ such that $\gr U^1_D$ is a graded $k[\epsilon]$-Lie algebra,
\[
(\gr U^1_D) \otimes_{k} k_D \cong \tld \fg_1 \otimes_{k} k_D
\] 
as graded $k_D[\epsilon]$-Lie algebras, and the $k_D^\times$-action on the left-hand side is induced by the $T$-grading on $\tld \fg_1$ via~\eqref{characters of finite nonsplit torus}.
\end{lemma}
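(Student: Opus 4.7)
The plan is to define a $p$-valuation on $U^1_D$ directly from the valuation of~$D$, verify the axioms using our standing hypothesis $p>n+1$, and then compute the associated graded via the Wedderburn splitting of~$D$ over the unramified extension $\tld K \subset D$ of degree~$n$.

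Let $v_D$ be the normalised valuation on~$D$, so $v_D(\varpi_D)=1$ and $v_D(p)=n$. Define
\[
\omega: U^1_D \setminus \{1\} \to n^{-1}\bZ_{>0}, \quad \omega(x) := v_D(x-1)/n.
\]
The axioms $\omega(xy^{-1}) \geq \min\{\omega(x),\omega(y)\}$ and $\omega([x,y]) \geq \omega(x)+\omega(y)$ follow from the ultrametric and sub-multiplicativity properties of~$v_D$. The crucial identity $\omega(x^p)=\omega(x)+1$ is verified by writing $x = 1+\varpi_D^i u$ with $u \in \cO_D^\times$ and expanding
\[
x^p = 1 + p\varpi_D^i u + \sum_{k=2}^{p-1}\binom{p}{k}(\varpi_D^i u)^k + (\varpi_D^i u)^p:
\]
the linear term has $v_D$-valuation $n+i$, while the remaining terms have $v_D$-valuation at least $\min\{n+ki, ip\}$, and the hypothesis $p>n+1$ gives $i(p-1) > n$ for all $i \geq 1$, so the linear term dominates. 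Since $1/n > 1/(p-1)$, this makes $\omega$ a $p$-valuation, and saturation follows by applying Hensel's lemma to $y \mapsto y^p - x$ whenever $\omega(x)>p/(p-1)$. Invariance of~$\omega$ under $\cO_D^\times$-conjugation is immediate since conjugation preserves~$v_D$. The graded piece $\gr^{i/n}U^1_D = \fm_D^i/\fm_D^{i+1}$ is a one-dimensional $k_D$-vector space via $1+\varpi_D^i u \mapsto \bar u$, so $\gr U^1_D$ is naturally a graded $k_D$-module, and the operator~$\epsilon$ is induced by multiplication by $p = \varpi_D^n$ in~$\cO_D$.

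For the identification after extending scalars, I would use the Wedderburn isomorphism $D \otimes_K \tld K \cong M_n(\tld K)$, under which $\cO_D \otimes_{\cO_K} \cO_{\tld K}$ matches the Iwahori order associated to the cyclic uniformizer in~$\GL_n(\tld K)$. This identifies $U^1_D$ with the fixed points of a twisted $\Gal(\tld K/K)$-action on the pro-$p$ Iwahori subgroup $I_1 \subset \GL_n(\tld K)$, where the Galois twist is inner conjugation by the cyclic uniformizer (which cyclically permutes the diagonal entries and is compatible with the Frobenius on~$\cO_{\tld K}$). The valuation~$\omega$ on~$U^1_D$ is the restriction of the Moy--Prasad $p$-valuation of~$I_1$ from Definition~\ref{MP filtration}, so Theorem~\ref{graded MP} applied to $\GL_{n,\tld K}$ gives an isomorphism of graded $k_D[\epsilon]$-Lie algebras $\gr I_1 \cong \tld \fg_1 \otimes_k k_D$ intertwining the Galois twists on both sides. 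Taking $\Gal(\tld K/K)$-fixed points recovers $\gr U^1_D$, and extending scalars back to~$k_D$ over~$k$ trivialises the twist, producing the desired isomorphism
\[
(\gr U^1_D) \otimes_k k_D \cong \tld \fg_1 \otimes_k k_D.
\]

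The main obstacle will be to track the twisted Galois action and the induced $k_D^\times$-action precisely enough to match the gradings. Using Lemma~\ref{alternative model for gbar} to present $\gr^{i/n}\tld \fg_1 \cong k^n$, the $T$-weight on the $j$-th factor is $\varepsilon_j - \varepsilon_{j\oplus i}$; on the other side, Teichm\"uller conjugation by $t \in k_D^\times$ sends $\varpi_D^i \bar u$ to $t^{q^i}\bar u\, t^{-1}$, because $\varpi_D$ acts on~$k_D$ by the Frobenius. Comparing these two descriptions via~\eqref{characters of finite nonsplit torus}, under which $\chi_D(\varepsilon_j-\varepsilon_{j\oplus i})(t) = t^{q^{j-1}-q^{j\oplus i-1}}$, reduces to a direct combinatorial verification that pins down the bijection between the $n$ factors of $k^n$ and the $n$ components of $\fm_D^i/\fm_D^{i+1}\otimes_k k_D$ arising from the splitting $k_D \otimes_k k_D \cong \prod_{\sigma \in \Gal(k_D/k)}k_D$.
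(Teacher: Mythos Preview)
Your approach is valid but genuinely different from the paper's. You reduce to the split case via the Wedderburn splitting $D\otimes_K \tld K\cong M_n(\tld K)$, identify the congruence filtration on $U^1_D$ with the restriction of the Moy--Prasad filtration on the pro-$p$ Iwahori of $\GL_n(\tld K)$, invoke Theorem~\ref{graded MP}, and then run Galois descent. The paper instead works intrinsically: it quotes from~\cite[Section~6.3]{SorensenHochschild} that $\gr^{i/n}U^1_D\cong k_D$ with bracket $[x,y]=xy^{q^i}-yx^{q^j}$ and $k_D^\times$-action by $t\mapsto t^{1-q^i}$, and then observes that after the splitting $k_D\otimes_k k_D\cong k_D^n$ (via $x\otimes y\mapsto (x^{q^j}y)_j$) this is \emph{exactly} the presentation of $\tld\fg_1\otimes_k k_D$ furnished by Lemma~\ref{alternative model for gbar}. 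So the paper's proof is a two-line pattern match once Lemma~\ref{alternative model for gbar} is in hand, whereas your route re-derives that match from the geometry of the inner form.

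What your approach buys is a conceptual explanation (étale descent from the split Iwahori), at the cost of several points you would still need to pin down: that $\cO_D\otimes_{\cO_K}\cO_{\tld K}$ really is an Iwahori order (true with the standard cyclic-uniformizer presentation, but not automatic), that passing to associated graded commutes with taking $\Gal(\tld K/K)$-fixed points on the Moy--Prasad filtration, and the precise form of the twisted Galois action on $\tld\fg_1\otimes_k k_D$ (you correctly identify that this is the main obstacle). Your character computation in the last paragraph is on the right track, but note the sign: with the convention $\varpi_D a\varpi_D^{-1}=\sigma(a)$ the paper gets the action $t\mapsto t^{1-q^i}$, and your sketch gives $t\mapsto t^{q^i-1}$; make sure your choice of Frobenius direction in the Wedderburn isomorphism is consistent with~\eqref{characters of finite nonsplit torus}. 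None of these are serious gaps, but the paper's direct argument avoids them entirely.
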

\begin{proof}
As explained in~\cite[Section~6.3]{SorensenHochschild}, the congruence filtration $\Fil^{i/n} U^1_D = U^i_D$ is a saturated, $k_D^\times$-invariant $p$-valuation,
and there are isomorphisms
$\gr^{i/n} U^1_D \to k_D$ 
such that 
\begin{enumerate}
\item $\epsilon : \gr^{i/n} U^1_D \to \gr^{i/n+1} U^1_D$ corresponds to the identity,
\item if $x \in \gr^{i/n} U^1_D$ and~$y \in \gr^{j/n} U^1_D$, then
\[
[x, y] = xy^{q^i}- yx^{q^j},
\]
\item $k_D^\times$ acts on $\gr^{i/n} U^1_D$ via the character $k_D^\times \to k_D^\times, x \mapsto x^{1-q^i}$.
\end{enumerate}
Now
\[
(\gr^{i/n} U^1_D) \otimes_{k} k_D = (\gr^{i/n}U^1_D) \otimes_{k_D} (k_D \otimes_{k} k_D) \cong (\gr^{i/n}U^1_D) \otimes_{k_D} k_D^n,
\]
where the last isomorphism is induced by $\mu: k_D \otimes_k k_D \to k_D^n$ with $i$-th component $x \otimes y \mapsto x^{q^i}y$.
Composing with $\gr^{i/n} U^1_D \isom k_D$, we obtain isomorphisms
\[
\lambda_i: \gr^{i/n} U^1_D \otimes_k k_D \isom k_D^n
\]
which send $\lambda_{i+j}([x, y] \otimes 1)$ to $\lambda_i(x\otimes 1)\Phi^i(\lambda_j(y\otimes 1))-\lambda_j(y\otimes 1)\Phi^j(\lambda_i(x\otimes 1))$.
Furthermore, if~$z \in k_D^\times$ then $\lambda_i((z.x) \otimes 1) = \mu(z^{1-q^i} \otimes 1) \lambda_i(x \otimes 1)$.
Hence the lemma is a consequence of the presentation 
of~$\tld \fg_1$ described in Lemma~\ref{alternative model for gbar}.
\end{proof}

\begin{pp}\label{pp: cohomology of D*}
Assume that~$p > n+1$, let~$\Phi$ be the root system of type~$A_{n-1}$, and let~$Z_1$ be the pro-$p$ Sylow subgroup of~$\cO_K^\times$.
Then the following graded $k_D$-algebras are isomorphic:
\[H^*(\cO_D^\times, k_D) \cong (k_D[x_1]/x_1^2 \otimes_{k_D} \Lambda_{k_D}(\Phi))^{\otimes f} \cong H^*(Z_1, k_D) \otimes_{k_D} \Lambda_{k_D}(\Phi)^{\otimes f} \cong H^*(I, k_D).\]
\end{pp}
\begin{proof}
As in~\eqref{Lazard spectral sequence III}, there is a multiplicative spectral sequence
\begin{equation}\label{nonsplit Lazard spectral sequence}
E_1^{r, s} := H^{r, s}(\gr U^1_D \otimes_{\bF_p[\epsilon]} \bF_p, k_D)^{k_D^\times} \Rightarrow H^{r+s}(\cO_D^\times, k_D).
\end{equation}
Let $\iota: k \to k_D$ be the inclusion.
We have
\[
\gr(U^1_D) \otimes_{\bF_p} k_D \cong \prod_{i=0}^{f-1}(\gr(U^1_D) \otimes_{k, \iota \circ \Frob_p^i} k_D) = \prod_{i=0}^{f-1}(\gr(U^1_D) \otimes_{k, \iota} k_D) \otimes_{k_D, \Frob_p^i} k_D
\]
as graded $k_D[\epsilon]$-Lie algebras, and as $k_D^\times$-modules.
Regarding a $k_D[k_D^\times]$-module as a $\Hom(k_D^\times, k_D^\times)$-graded $k_D$-vector space, we conclude that
\[
\gr(U^1_D) \otimes_{\bF_p} k_D \cong \prod_{i=0}^{f-1}(\gr(U^1_D) \otimes_{k, \iota} k_D)^{(i)},
\]
where, as usual, the index~$(i)$ means that the degrees are multiplied by~$p^i$ (and not~$q^i$).
In the rest of this proof, we omit~$\iota$ from the tensor product notation.
By Lemma~\ref{gbar for D*}, we find that
\[
\gr(U^1_D) \otimes_{\bF_p} k_D \cong \prod_{i=0}^{f-1}(\tld \fg_1 \otimes_k k_D)^{(i)},
\]
as graded $k_D[\epsilon]$-Lie algebras, 
and the $\Hom(k_D^\times, k_D^\times)$-grading on the left-hand side is induced by the $X^*(T)$-grading on the right-hand side via~\eqref{characters of finite nonsplit torus}.
The $E_1$-page of~\eqref{nonsplit Lazard spectral sequence} is thus isomorphic to 
\[
\left (\bigotimes_{i=0}^{f-1} H^*(\tld \fg_1/\epsilon \tld \fg_1, k)^{(i)} \otimes_{k} k_D \right )^{(qw-1)X^*(T)}. 
\]
Writing~$\fg^{\der} := \mathfrak{sl}_{n, k}$, Theorem~\ref{presentation} shows that there is a $T$-equivariant isomorphism 
\[
\tld \fg_1 / \epsilon \tld \fg_1 \cong kZ \oplus (\fn \ltimes (\fg^{\der}/\fn)) \cong \fn \ltimes (\fg/\fn),
\]
for the trivial $T$-action on the central element~$Z$.
Hence the $E_1$-page of~\eqref{nonsplit Lazard spectral sequence} is isomorphic to
\[
\left (\bigotimes_{i=0}^{f-1} H^*(\fn \ltimes (\fg/\fn), k)^{(i)} \otimes_{k} k_D \right )^{(qw-1)X^*(T)}
\]
and by Lemma~\ref{weights of exterior algebra in nonsplit case}, 
this coincides with the weight zero part, i.e.\ with
\[
\left (\bigotimes_{i=0}^{f-1} H^*(\fn \ltimes (\fg/\fn), k)^{(i)} \otimes_{k} k_D \right )^{T}.
\]
By the second inequality in Lemma~\ref{weights of gbar-cohomology}, this coincides with
\[
\bigotimes_{i=0}^{f-1} H^*(\fn \ltimes (\fg/\fn), k)^{T} \otimes_k k_D.
\]
We thus deduce from Proposition~\ref{upper bound} that
\[
E_1 \cong (k_D[x_1]/x_1^2 \otimes_{k_D} \Lambda_{k_D}(\Phi))^{\otimes f}.
\]
Note that the right-hand side is isomorphic to $H^*(Z_1, k_D) \otimes_{k_D} \Lambda_{k_D}(\Phi)^{\otimes f}$.
We can now conclude by the same argument as Theorem~\ref{main theorem}.
More precisely, by Remark~\ref{multiplicative convergence}, it suffices to prove that~\eqref{nonsplit Lazard spectral sequence} degenerates at~$E_1$.
Let~$K_D$ be the unramified extension of~$K$ with residue field~$k_D$.
Since 
\[
\Lie (\cO_D^\times) \otimes_{\bQ_p} K_D = D \otimes_{\bQ_p} K_D \cong \prod_{i=0}^{f-1}\mathfrak{gl}_{n, K_D},
\]
the isomorphism~\eqref{characteristic zero Lazard isomorphism} shows that
\[
H^*(\cO_D^\times, K_D) \cong (K_D[x_1]/x_1^2 \otimes_{K_D} \Lambda_{K_D}(\Phi))^{\otimes f}.
\] 
Hence the dimension of $H^*(\cO_D^\times, k_D)$ is at least the dimension of~$E_1$, which forces~\eqref{nonsplit Lazard spectral sequence} to degenerate at $E_1$.
\end{proof}

\begin{lemma}\label{weights of exterior algebra in nonsplit case}
Let~$\fg := \mathfrak{gl}_{n, k}$, and assume~$p > n$.
Let~$\lambda$ be a weight of  
\[
\bigotimes_{i=0}^{f-1} H^*(\fn \ltimes (\fg/\fn), k)^{(i)} \otimes_{k} k_D,
\]
and assume that $\chi_D(\lambda) = 0$,
where~$\chi_D$ is defined in~\eqref{characters of finite nonsplit torus}.
Then~$\lambda = 0$.
\end{lemma}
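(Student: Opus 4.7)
The plan is to imitate the proof of Lemma~\ref{weights of exterior algebra}, adapting it to the twisted invariance condition $\chi_D(\lambda) = 0$. First, I will expand $\lambda = \sum_{i=0}^{f-1} p^i \lambda_i$ where each $\lambda_i \in X^*(T)$ is a weight of $H^*(\fn \ltimes (\fg/\fn), k)$. Using Lemma~\ref{Hochschild--Serre for semidirect products} together with Kostant's theorem~\eqref{twisted Kostant's theorem} (applicable since $p > n \geq h-1$), after filtering $\wedge^s(\fg/\fn)^\vee$ by one-dimensional $B$-subquotients, I can write $\lambda_i = w_i \cdot 0 + \mu_i$ for some $w_i \in W$ and some weight $\mu_i$ of $\wedge(\fg/\fn)^\vee \cong \wedge \fb$. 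Arguing exactly as in Lemma~\ref{weights of gbar-cohomology}, this shows each $\lambda_i$ is a weight of $\wedge \fg$; hence writing $\lambda_i = \sum_j c_{i,j}\varepsilon_j$ in the standard basis of $X^*(T) \cong \bZ^n$, every coordinate satisfies $|c_{i,j}| \leq n-1$.

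The key new ingredient is to interpret the condition $\chi_D(\lambda) = 0$ algebraically: by~\eqref{characters of finite nonsplit torus}, it means $\lambda = (qw - 1)\nu$ for some $\nu \in X^*(T)$. Since $w \in W$ has order $n$ on $X^*(T)$, the resolvent identity $(qw)^n = q^n$ gives
\[
(q^n - 1)\nu \;=\; \bigl((qw)^n - 1\bigr)\nu \;=\; \sum_{k=0}^{n-1} (qw)^k (qw - 1) \nu \;=\; \sum_{k=0}^{n-1} q^k w^k \lambda.
\]
Since $w$ only permutes the standard basis of $X^*(T)$, each $w^k\lambda$ has the same coordinate bound as $\lambda$; combining the previous paragraph with a geometric-series summation, every coordinate of $\lambda$ is bounded by $(n-1)(q-1)/(p-1)$, hence every coordinate of $(q^n-1)\nu$ is bounded by
\[
\frac{q^n-1}{q-1} \cdot \frac{(n-1)(q-1)}{p-1} \;=\; \frac{(q^n-1)(n-1)}{p-1}.
\]
Thus every coordinate of $\nu \in \bZ^n$ has absolute value strictly less than $1$, using $p > n$, which forces $\nu = 0$ and hence $\lambda = 0$.

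The main subtlety, compared with Lemma~\ref{weights of exterior algebra}, is that $(qw - 1)$ mixes the coordinates of $X^*(T)$, so a direct coordinate-wise divisibility argument is not available. The resolvent identity is the essential trick: it replaces the twisted condition by the scalar multiplication $(q^n - 1)\nu$, after which the geometric sum in $q^k$ converts the problem into a clean coordinate bound governed by $p > n$.
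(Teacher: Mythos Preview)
Your proof is correct and follows the same overall strategy as the paper: reduce each $\lambda_i$ to a weight of $\wedge\fg$ via Kostant's theorem, then use the coordinate bound $|c_{i,j}|\le n-1<p-1$ to force $\lambda=0$. The only difference is in the final step: the paper passes to the single integer $\lambda^*=\sum_{i,j}\lambda_{i,j}\,p^{\,i+(j-1)f}$ and observes that $\chi_D(\lambda)=0$ means $(q^n-1)\mid\lambda^*$, which together with $|\lambda^*|<q^n-1$ gives $\lambda^*=0$ and hence all $\lambda_{i,j}=0$; you instead stay in $X^*(T)$, write $\lambda=(qw-1)\nu$, and use the resolvent identity $(q^n-1)\nu=\sum_{k}q^k w^k\lambda$ to bound the coordinates of $\nu$. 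These are two packagings of the same estimate and each yields the result cleanly.
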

\begin{proof}
The assumption means that there exist weights $\{\lambda_i : 0 \leq i \leq f-1\}$ of $H^*(\fn \ltimes (\fg/\fn), k)$ such that $\lambda = \sum_{i=0}^{f-1} \lambda_ip^i$ and $\chi_D(\lambda) = 0$.
As in the proof of Lemma~\ref{weights of gbar-cohomology}, 
Kostant's theorem~\eqref{twisted Kostant's theorem} implies that for all~$i$ there exist~$w_i \in W$ and a weight~$\mu_i$ of $\wedge \fb$ such that $\lambda_i = w_i \cdot 0 + \mu_i$.
Hence~$\lambda_i$ is a weight of~$\wedge \fg$, and so
it suffices to prove that if~$\lambda_i$ is a weight of $\wedge \fg$, and $\chi_D(\sum_{i=0}^{f-1}\lambda_i p^i) = 0$, then~$\lambda = 0$.
This is an analogue of Lemma~\ref{weights of exterior algebra}, and could presumably be treated in a similar way, but we give a simplified argument for~$\mathfrak{gl}_n$.
Without loss of generality, $\lambda_i$ is a sum of pairwise distinct roots. 
The roots have the form~$\varepsilon_a - \varepsilon_b$ for~$1 \leq a \ne b \leq n$.
Hence, if we write
\[
\lambda_i = \sum_{j=1}^n\lambda_{i, j}\varepsilon_j,
\]
with~$\lambda_{i, j} \in \bZ$, then~$|\lambda_{i, j}|\leq n -1 < p-1$ for all~$i, j$.
This implies that if 
\[
\lambda^* := \sum_{\substack{0 \leq i \leq f-1\\1 \leq j \leq n}}\lambda_{i, j}p^{i+(j-1)f}
\]
is divisible by~$q^n-1$ then~$\lambda = 0$.
This concludes the proof because by definition (see~\eqref{characters of finite nonsplit torus}) we have
\[
\chi_D(\lambda): x \mapsto x^{\lambda^*}.\qedhere
\]
\end{proof}

\bibliographystyle{amsalpha}
\bibliography{mathsrefs}

\providecommand{\bysame}{\leavevmode\hbox to3em{\hrulefill}\thinspace}
\providecommand{\MR}{\relax\ifhmode\unskip\space\fi MR }
\providecommand{\MRhref}[2]{%
  \href{http://www.ams.org/mathscinet-getitem?mr=#1}{#2}
}
\providecommand{\href}[2]{#2}
\begin{thebibliography}{DLHW}

\bibitem[AJ84]{AJinduced}
Henning~Haahr Andersen and Jens~Carsten Jantzen, \emph{Cohomology of induced representations for algebraic groups}, Math. Ann. \textbf{269} (1984), no.~4, 487--525. \MR{766011}

\bibitem[Ard]{Ardakovsaturable}
Konstantin Ardakov, \emph{On the mod~$p$ cohomology of certain $p$-saturable groups}, in preparation.

\bibitem[BKS24]{KangSalch}
{M}ohammed {B}ehzad {K}ang and Andrew Salch, \emph{The mod p cohomology of the {M}orava stabilizer group at large primes}, \url{https://doi.org/10.48550/arXiv.2410.24171}, 2024.

\bibitem[Bou02]{BourbakiLieIV-VI}
Nicolas Bourbaki, \emph{Lie groups and {L}ie algebras. {C}hapters 4--6}, Elements of Mathematics (Berlin), Springer-Verlag, Berlin, 2002, Translated from the 1968 French original by Andrew Pressley. \MR{1890629}

\bibitem[BT84]{BTII}
F.~Bruhat and J.~Tits, \emph{Groupes r\'eductifs sur un corps local. {II}. {S}ch\'emas en groupes. {E}xistence d'une donn\'ee radicielle valu\'ee}, Inst. Hautes \'Etudes Sci. Publ. Math. (1984), no.~60, 197--376. \MR{756316}

\bibitem[CE48]{CELie}
Claude Chevalley and Samuel Eilenberg, \emph{Cohomology theory of {L}ie groups and {L}ie algebras}, Trans. Amer. Math. Soc. \textbf{63} (1948), 85--124. \MR{24908}

\bibitem[CW74]{CasselmanWigner}
W.~Casselman and D.~Wigner, \emph{Continuous cohomology and a conjecture of {S}erre's}, Invent. Math. \textbf{25} (1974), 199--211. \MR{352333}

\bibitem[Dem73]{Demazureinvariants}
Michel Demazure, \emph{Invariants sym\'etriques entiers des groupes de {W}eyl et torsion}, Invent. Math. \textbf{21} (1973), 287--301. \MR{342522}

\bibitem[Dem74]{DemazureSchubert}
\bysame, \emph{D\'esingularisation des vari\'et\'es de {S}chubert g\'en\'eralis\'ees}, Ann. Sci. \'Ecole Norm. Sup. (4) \textbf{7} (1974), 53--88. \MR{354697}

\bibitem[DLHW]{DLW}
Andrea Dotto, Bao~V. Le~Hung, and Junho Won, \emph{Generic {F}ontaine--{L}affaille spectral {H}ecke algebras}, in preparation.

\bibitem[DP12]{DPLie}
Dieter Degrijse and Nansen Petrosyan, \emph{On the cohomology of split {L}ie algebra extensions}, J. Lie Theory \textbf{22} (2012), no.~1, 1--15. \MR{2859025}

\bibitem[FP86]{FPdiscrete}
Eric~M. Friedlander and Brian~J. Parshall, \emph{Cohomology of infinitesimal and discrete groups}, Math. Ann. \textbf{273} (1986), no.~3, 353--374. \MR{824427}

\bibitem[Hen98]{Henncentralizers}
Hans-Werner Henn, \emph{Centralizers of elementary abelian {$p$}-subgroups and mod-{$p$} cohomology of profinite groups}, Duke Math. J. \textbf{91} (1998), no.~3, 561--585. \MR{1604171}

\bibitem[HSS00]{HSS}
Mark Hovey, Brooke Shipley, and Jeff Smith, \emph{Symmetric spectra}, J. Amer. Math. Soc. \textbf{13} (2000), no.~1, 149--208. \MR{1695653}

\bibitem[Ill71]{IllusieI}
Luc Illusie, \emph{Complexe cotangent et d\'eformations. {I}}, Lecture Notes in Mathematics, vol. Vol. 239, Springer-Verlag, Berlin-New York, 1971. \MR{491680}

\bibitem[Jan03]{Jantzenbook}
Jens~Carsten Jantzen, \emph{Representations of algebraic groups}, second ed., Mathematical Surveys and Monographs, vol. 107, American Mathematical Society, Providence, RI, 2003. \MR{2015057}

\bibitem[Kos50]{KoszulLie}
Jean-Louis Koszul, \emph{Homologie et cohomologie des alg\`ebres de {L}ie}, Bull. Soc. Math. France \textbf{78} (1950), 65--127. \MR{36511}

\bibitem[KP23]{KPbook}
Tasho Kaletha and Gopal Prasad, \emph{Bruhat-{T}its theory---a new approach}, New Mathematical Monographs, vol.~44, Cambridge University Press, Cambridge, 2023. \MR{4520154}

\bibitem[Laz65]{Lazard}
Michel Lazard, \emph{Groupes analytiques {$p$}-adiques}, Inst. Hautes \'Etudes Sci. Publ. Math. (1965), no.~26, 389--603. \MR{209286}

\bibitem[LS24]{LahiriSorensen}
Aranya Lahiri and Claus Sorensen, \emph{Rigid vectors in {$p$}-adic principal series representations}, Israel J. Math. \textbf{259} (2024), no.~1, 427--459. \MR{4732375}

\bibitem[Sor21]{SorensenHochschild}
Claus Sorensen, \emph{Hochschild cohomology and {$p$}-adic {L}ie groups}, M\"unster J. Math. \textbf{14} (2021), no.~1, 101--122. \MR{4300164}

\bibitem[Sri93]{Srinivas}
V.~Srinivas, \emph{Gysin maps and cycle classes for {H}odge cohomology}, Proc. Indian Acad. Sci. Math. Sci. \textbf{103} (1993), no.~3, 209--247. \MR{1273351}

\bibitem[{Sta}]{Stacks}
The {Stacks Project Authors}, \emph{\itshape {S}tacks {P}roject}, \url{http://stacks.math.columbia.edu}.

\bibitem[Yek20]{Yekutieli}
Amnon Yekutieli, \emph{Derived categories}, Cambridge Studies in Advanced Mathematics, vol. 183, Cambridge University Press, Cambridge, 2020. \MR{3971537}

\end{thebibliography}
\end{document}